\newcommand{\Cf}{\mathrm{C}}   
\newcommand{\Cb}{\mathrm{C}_{\mathrm{b}}} 
\newcommand{\cC}{\mathcal{C}}
\newcommand{\cK}{\mathcal{K}}     
\newcommand{\cM}{{\mathcal{M}}}   
\newcommand{\cR}{{\mathcal{R}}}  
\newcommand{\T}{{\mathcal{T}}}
\newcommand{\cZ}{\mathcal{Z}}    
\newcommand{\C}{\mathbb{C}}        
\newcommand{\R}{\mathbb{R}}        
\newcommand{\N}{\mathbb{N}}        
\newcommand{\ot}{\otimes}
\newcommand{\cf} {\textrm{cf.\/}\ }
\newcommand{\cst}{C$^*$}
\newcommand{\Cs}{$C^*$-algebra}
\newcommand{\sh}{{$^*$-ho\-mo\-mor\-phism}}
\newcommand{\Ker}{\mathrm{Ker}}
\newcommand{\ep}{\varepsilon}
\newcommand{\Cu}{\mathrm{Cu}}
\newtheorem{thm}{Theorem}[section]
\newtheorem{cor}[thm]{Corollary}
\newtheorem{lemma}[thm]{Lemma}
\newtheorem{prop}[thm]{Proposition}
\theoremstyle{definition}
\newtheorem{definition}[thm]{Definition}
\newtheorem{rem}[thm]{Remark}
\numberwithin{equation}{section}
\begin{document}

\title{Central sequence $C^*$-algebras and tensorial absorption of the Jiang-Su algebra}
\author{Eberhard Kirchberg  and  Mikael R{\o}rdam}

\thanks{This research was supported by the Danish National Research Foundation
(DNRF) through the Centre for Symmetry and Deformation at University
of Copenhagen. The first named author thanks the Centre for Symmetry and Deformation for hosting him as a visiting professor during the fall of 2011, where this work was initiated.}
\date{November 16, 2012}
%
\subjclass{Primary: 46L35, Classification of amenable C*-algebras, K-theory, }
%
%
%

\begin{abstract}
We study properties of the central sequence algebra $A_\omega \cap A'$ of a \Cs{} $A$, and we present an alternative approach to a recent result of Matui and Sato. They prove that every unital separable simple nuclear \Cs, whose trace simplex is finite dimensional, tensorially absorbs the Jiang-Su algebra if and only if it has the strict comparison property. We extend their result to the case where the extreme boundary of the trace simplex is closed and of finite topological dimension. We are also able to relax the assumption on the \Cs{} of having the strict comparison property to a weaker property, that we call \emph{local weak comparison}. Namely, we prove that a unital separable simple nuclear \Cs, whose trace simplex has finite dimensional closed extreme boundary,  tensorially absorbs the Jiang-Su algebra if and only if it has the local weak comparison property.

We can also eliminate the nuclearity assumption, if instead we assume the (SI) property of Matui and Sato, and, moreover,  that each II$_1$-factor representation of the \Cs{} is a McDuff factor.
\end{abstract}

\maketitle

\tableofcontents

\section{Introduction} 

\noindent In  the quest to classify simple separable nuclear \Cs s, pioneered by George Elliott, it has become necessary to invoke some regularity property of the \Cs, without which classification---at least by simple $K$-theoretical invariants---is known to be impossible, see for example \cite{Toms.example} and \cite{Ror.simple}. There are three such, seemingly very different, regularity properties of particular interest:  tensorial absorption of the so-called Jiang-Su algebra $\cZ$, also called $\cZ$-stability; finite nuclear dimension (after Wilhelm Winter); and strict comparison of positive elements (after Bruce Blackadar). The latter can be reformulated as an algebraic property of the Cuntz semigroup, called almost unperforation. Toms and Winter have conjectured that these three fundamental properties are equivalent for all separable, simple, nuclear \Cs s. The Toms--Winter conjecture is known to hold extensively. All $\cZ$-stable \Cs s have strict comparison, \cite{Ror.Z.absorb}. Winter has shown in \cite{Winter.Z} that finite nuclear dimension ensures $\cZ$-stability. 

Last year, in a remarkable paper by Matui and Sato, \cite{Matui.Sato}, it was shown that the strict comparison property is equivalent to $\cZ$-stability for all unital, separable, simple, nuclear \Cs s whose trace simplex has finite dimension, i.e., its extreme boundary is a finite set. Matui and Sato employed  new techniques involving the central sequence \Cs, $A_\omega \cap A'$. They introduced two new properties of the central sequence algebra: property (SI) and "excision in small central sequences". They show that both properties are equivalent to strict comparison and to $\cZ$-absorption under the above given conditions. 

The approach of Matui and Sato has some similarities with the approach used by Phillips and the first named author in their proof in \cite{KirPhi.O2embedding} that purely infinite simple separable nuclear \Cs s tensorially absorb the Cuntz algebra $\mathcal{O}_\infty$. The latter result likewise relied on a detailed study of the central sequence \Cs. Indeed, it was shown in \cite{KirPhi.O2embedding} that if $A$ is a unital purely infinite simple separable nuclear \Cs, then $A_\omega \cap A'$ is a purely infinite and simple \Cs. In particular, there is a unital embedding of $\mathcal{O}_\infty$ into $A_\omega \cap A'$, which implies that $A \cong A \otimes \mathcal{O}_\infty$. 

The first named author studied the central sequence algebra in \cite{Kir.AbelProc}. We rely on, and further develop, the techniques of that paper to arrive at the main result stated in the abstract. 

Our weakened comparability assumption is automatically satisfied by any unital simple \Cs{} satisfying Winter's $(m,\bar{m})$-pureness condition from \cite{Winter.Z}, cf.\ Lemma~\ref{cor:nuclear.case}. The main theorem from Winter's paper, \cite{Winter.Z}, concerning \Cs s with locally finite nuclear dimension therefore follows from our Corollary \ref{cor:nuclear.case} in the case where the extreme boundary of the trace simplex of the given \Cs{} is closed and has finite topological dimension. 

Let us describe some of the main  ingredients in Matui and Sato's paper \cite{Matui.Sato}, that we shall build on and further develop. It is well-known that if $A$ is a separable unital \Cs{}, then $A$ is $\cZ$-absorbing, i.e., $A \cong A \otimes \cZ$ if and only if there is a unital \sh{} from $\cZ$ into the central sequence algebra $A_\omega \cap A'$ for some ultrafilter $\omega$ on $\N$. It follows from \cite[Corollary 1.13]{Kir.AbelProc} that if there is a unital \sh{}  from a separable unital \Cs{} $D$ into $A_\omega \cap A'$ , then there is a unital \sh{} from the maximal infinite tensor power $\bigotimes_{n=1}^\infty D$ to $A_\omega \cap A'$. It was shown by Dadarlat and Toms in \cite{DadarlatToms.Z.stability} that if $D$ contains a unital sub-\Cs{} which is sub-homogeneous and has no characters, then $\cZ$ embeds unitally into the infinite tensor power $\bigotimes_{n=1}^\infty D$. It follows that a unital separable \Cs{} $A$ absorbs the Jiang-Su algebra if and only if there is a unital \sh{} from a sub-homogeneous \Cs{} without characters into $A_\omega \cap A'$. One example of such a \Cs{} is the dimension drop \Cs{} $I(2,3)$, which consists of all continuous functions $f \colon [0,1] \to M_2 \otimes M_3$ such that $f(0) \in M_2 \otimes \C$ while $f(1) \in \C \otimes M_3$.

It was shown by Sato in \cite{Sato.1104} that the natural \sh{} $A_\omega \cap A' \to N^\omega \cap N'$ is surjective, when $A$ is a unital separable nuclear \Cs, $\tau$ is a faithful tracial state on $A$, and $N$ is the finite von Neumann algebra arising from $A$ via this trace. The kernel, $J_\tau$, of the \sh{} $A_\omega \cap A' \to N^\omega \cap N'$  consists of those elements $a$ in $A_\omega \cap A'$ for which $\|a\|_{2,\tau} = 0$ (see Definition \ref{def:seminorms}). In other words, $(A_\omega \cap A')/J_\tau$ is a finite von Neumann algebra when the conditions above are satisfied. If $N$ is a McDuff factor, then $N^\omega \cap N'$ is a II$_1$ von Neumann algebra and there is, in particular, a unital \sh{} $M_2 \to (A_\omega \cap A')/J_\tau$. One would like to lift this \sh{} to a unital \sh{} $I(2,3) \to A_\omega \cap A'$, since this will entail that $A$ is $\cZ$-absorbing. Matui and Sato invented the property, called (SI), to solve this problem. Property (SI) is a (weak) comparability property of the central sequence algebra. In order to prove that property (SI) holds, Matui and Sato introduce another property of the central sequence algebra, called \emph{excision in small central sequences}. They show that excision in small central sequences can be obtained from strict comparability and from nuclearity; and they remark that property (SI) holds if the identity map on the \Cs{} can be excised in small central sequences. 

We give an elementary proof of  Sato's result on surjectivity of the map $A_\omega \cap A' \to N^\omega \cap N'$, and we show that surjectivity holds without assuming that $A$ is nuclear, see Theorem \ref{thm:M-omega}. The proof uses, at least implicitly, that the ideal $J_\tau$ mentioned above is a so-called $\sigma$-ideal, see Remark \ref{rem:J_tau}. In Section \ref{sec:trace.kernel.ideal} we consider the ideal $J_A$ of $A_\omega$ consisting of those elements that are represented by sequences in $\ell^\infty(A)$ with uniformly vanishing trace. When $A$ has infinitely many extremal traces, then $(A_\omega \cap A')/J_A$ is no longer a von Neumann algebra.  The main effort of this paper is to verify that, nonetheless, there is a unital \sh{} $M_2 \to (A_\omega \cap A')/J_A$ under suitable conditions on $A$. As in the original paper by Matui and Sato, one can then use property (SI) to lift this unital \sh{} to a unital \sh{} $I(2,3) \to A_\omega \cap A'$, and in this way obtain $\cZ$-stability of $A$.

We have learned that Andrew Toms, Stuart White, and Wilhelm Winter independently have obtained results very similar to ours, see \cite{Toms.Winter.Z.absorb}. Also Yasuhiko Sato has independently obtained a similar result, see \cite{Sato.1209}. We hope, nonetheless, that the methods of this paper, which offer a new view on the use of central sequence \Cs s, as well as the somewhat higher generality of our results, will make this paper worthwhile to the reader. 

In the following section we give an overview of our main results and we outline the methods we are using.

\section{Preliminaries and overview}
\label{sec:intro}
\noindent We present here some of the main ideas of our paper, and we also give the most important definitions that will be used throughout the paper. First we discuss some notions of comparability in \Cs s.

A \Cs{} $A$ is said to have \emph{strict comparison} if comparison of positive elements in matrix algebras over $A$ is determined by traces. In more detail, 
for all $k \ge 1$ and for all positive elements  $a,b \in  M_k(A)$, if  $d_\tau (a) < d_\tau (b)$
 for all $2$-quasi-traces $\tau$ on $A$, then
 $\langle a \rangle \leq \langle b \rangle$ in the Cuntz semigroup $\Cu(A)$. The latter holds if  
there exists a sequence $(r_n)_{n \ge 1}$ in $M_k(A)$ such that $r_n^*br_n \to a$, and when this holds we write $a \precsim b$. We remind the reader that $d_\tau$ is the \emph{dimension function} associated to $\tau$, and it is defined to be $d_\tau(a) = \lim_{n\to\infty}\tau(a^{1/n})$. 
 If $A$ is exact (and unital), then all $2$-quasi-traces are traces, thanks to a theorem of Haagerup, \cite{Haagerup.1991.notes}, and we can then replace "unital $2$-quasi-traces" with "tracial states" in the definition of strict comparison. The set of  unital $2$-quasi-traces on $A$ will be denoted by $QT(A)$, and the set of tracial states on $A$ by $T(A)$.
It was shown in \cite{Ror.Z.absorb} that strict comparison for $A$ is equivalent to saying that $\Cu(A)$ is \emph{almost unperforated}, i.e., if $x,y \in \Cu(A)$ are such that $(n+1)x \le ny$ for some $n \ge 1$, then $x \le y$. 

We introduce a weaker comparison property of $A$ as follows:
\begin{definition}\label{def:weak.comparison}
Let $A$ be a unital, simple, and stably finite \Cs.
We say that $A$ has \emph{local weak comparison}, if 
there is a 
constant $\gamma(A)\in [1,\infty)$
such that the following holds for all positive elements $a$ and $b$ in $A$: If  
$$\gamma(A) \cdot \sup_{\tau\in QT(A)} d_\tau(a) <  \inf_{\tau\in QT(A)}  d_\tau(b),$$
then 
$\langle a \rangle  \leq  \langle b \rangle$ in the Cuntz semigroup 
$\Cu(A)$  of $A$.

If $M_n(A)$ has local weak comparison for all $n$, and $\sup_n \gamma(M_n(A)) < \infty$,  then we say that $A$ has \emph{weak comparison}.
\end{definition}

\smallskip
\noindent It is clear that strict comparison implies (local) weak comparison.

\begin{definition} Let $A$ be a \Cs{} and let $1 \le \alpha < \infty$. We say that $A$ has the \emph{$\alpha$-comparison property} if for all $x,y \in \Cu(A)$ and all integers $k, \ell \ge 1$ with $k > \alpha \ell$, the inequality $kx \le \ell y$ implies $x \le y$. 
\end{definition}

\noindent The lemma below follows easily from \cite[Proposition 3.2]{Ror.Z.absorb}.

\begin{lemma} A \Cs{} $A$ has $\alpha$-comparison for some $1 \le \alpha < \infty$ if for all $x,y \in \Cu(A)$ with $x \propto y$ (i.e., $x \le ny$ for some $n$), the inequality $\alpha f(x) < f(y)=1$ for all $f \in S(\Cu(A),y)$ (the set of additive, order preserving maps $\Cu(A) \to [0,\infty]$ normalized at $y$) implies $x \le y$.
\end{lemma}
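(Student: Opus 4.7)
The plan is to verify the definition of $\alpha$-comparison directly from the functional hypothesis. Fix integers $k,\ell \ge 1$ with $k > \alpha \ell$ and elements $x,y \in \Cu(A)$ satisfying $kx \le \ell y$; the goal is to deduce $x \le y$. If $y = 0$ then $\ell y = 0$, forcing $kx = 0$ and hence $x = 0 \le y$, so we may assume $y \ne 0$.

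Next I would verify the premise of the functional hypothesis at the pair $(x,y)$. Since addition in $\Cu(A)$ is positive (so $a \le a+b$ for all $a,b$), the chain $x \le kx \le \ell y$ shows that $x \propto y$, placing $(x,y)$ in the scope of the hypothesis. Fix now an arbitrary $f \in S(\Cu(A), y)$. Additivity and order preservation of $f$ applied to $kx \le \ell y$ give $k f(x) \le \ell f(y) = \ell$, so $f(x) \le \ell/k$ and in particular $f(x)$ is finite. The strict integer inequality $k > \alpha \ell$ yields $\alpha\ell/k < 1$, whence
$$\alpha f(x) \;\le\; \alpha\ell/k \;<\; 1 \;=\; f(y).$$
As $f \in S(\Cu(A),y)$ was arbitrary, the assumed implication delivers $x \le y$, which is exactly the $\alpha$-comparison property.

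The only real point is promoting the weak inequality $kf(x) \le \ell f(y)$ to the strict inequality $\alpha f(x) < f(y)$; this is handed to us by the strictness of $k > \alpha \ell$ together with the sharp bound $f(x) \le \ell/k$, so there is no serious obstacle. Indeed, the argument is essentially the straightforward $\alpha$-analogue of the functional characterization of almost unperforation (the case $\alpha = 1$) established in \cite[Proposition~3.2]{Ror.Z.absorb}, which is why the authors can cite it as making the deduction easy.
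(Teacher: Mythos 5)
Your proof is correct: the direct verification (deduce $f(x)\le \ell/k<1/\alpha$ for every $f\in S(\Cu(A),y)$ from $kx\le\ell y$ and $k>\alpha\ell$, note $x\le kx\le\ell y$ gives $x\propto y$, then invoke the hypothesis) is exactly the easy argument the paper intends, since it states the lemma without proof and merely points to \cite[Proposition~3.2]{Ror.Z.absorb} for the surrounding framework of states on $\Cu(A)$ normalized at $y$. No gaps; the handling of the case $y=0$ and of possibly infinite values of $f(x)$ is careful and correct.
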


\noindent
Our $\alpha$-comparison property is related to Winter's $m$-comparison property,  see \cite[Section 2]{Winter.Z}. We also have the following fact, the proof of which essentially is contained in \cite{Winter.Z}. 

\begin{lemma} \label{lm:pure}
Let $A$ be a simple unital \Cs{} with $QT(A) \ne \emptyset$. If $A$ has strong tracial $m$-comparison (in the sense of Winter, \cite[Definition 2.1]{Winter.Z}) for some $m \in \N$, then $A$ has weak comparison and local weak comparison (in the sense of Definition \ref{def:weak.comparison}). In particular, if $A$ is $(m,\bar{m})$-pure for some $m,\bar{m} \in \N$ (in the sense of Winter, \cite[Definition 2.6]{Winter.Z}), then $A$ has weak comparison and  local weak comparison.
\end{lemma}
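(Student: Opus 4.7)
The plan is to take $\gamma(A) := m+1$ and observe that both local weak and weak comparison are essentially a restatement of Winter's strong tracial $m$-comparison, so the first assertion is almost tautological; the second assertion will then reduce, via results already in \cite{Winter.Z}, to the first.

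First I would recall \cite[Definition~2.1]{Winter.Z}: strong tracial $m$-comparison says that, for positive elements $a,b$ in $A \otimes \cK$, the strict inequality $(m+1) d_\tau(a) < d_\tau(b)$ for every $\tau \in QT(A)$ forces $\langle a \rangle \le \langle b \rangle$ in $\Cu(A)$. Taking $\gamma(A) = m+1$ and assuming $\gamma(A) \cdot \sup_\tau d_\tau(a) < \inf_\tau d_\tau(b)$, one sees that for each individual $\tau \in QT(A)$ the chain $(m+1) d_\tau(a) \le (m+1) \sup_\sigma d_\sigma(a) < \inf_\sigma d_\sigma(b) \le d_\tau(b)$ holds, which is precisely the strict tracial hypothesis of Winter's condition; his property then yields $\langle a \rangle \le \langle b \rangle$, so $A$ has local weak comparison. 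Because Winter's definition is formulated uniformly on $A \otimes \cK$, and normalized quasi-traces on $M_n(A)$ restrict to quasi-traces on $A$, the same constant $m+1$ works for every $M_n(A)$ simultaneously; thus $\sup_n \gamma(M_n(A)) \le m+1 < \infty$, giving weak comparison.

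For the "in particular" statement, I would invoke \cite[Definition~2.6]{Winter.Z}: $(m,\bar{m})$-pureness incorporates $m$-comparison of the Cuntz semigroup $\Cu(A)$. The passage from (algebraic) $m$-comparison to the strong tracial form is carried out inside Section~2 of \cite{Winter.Z}; once that is in hand, the first part applies directly. This passage is the one genuinely non-trivial step and is what I expect to be the main obstacle: one must upgrade a strict inequality $(m+1)d_\tau(a) < d_\tau(b)$ uniform in $\tau \in QT(A)$ into a Cuntz-semigroup inequality, at the level of suitable $\epsilon$-cutoffs $(a-\ep)_+$, to which the algebraic $m$-comparison can be applied. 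The standard tool for this, already present in \cite{Winter.Z}, is the weak-$*$ compactness of $QT(A)$ together with the lower semicontinuity of the maps $\tau \mapsto d_\tau((a-\ep)_+)$; the remainder is routine bookkeeping with the order on $\Cu(A)$.
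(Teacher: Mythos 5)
Your proposal hinges on a misquotation of Winter's Definition 2.1, and that is where the gap lies. Strong tracial $m$-comparison is \emph{not} the statement ``$(m+1)d_\tau(a) < d_\tau(b)$ for all $\tau \in QT(A)$ implies $\langle a\rangle \le \langle b\rangle$''; its hypothesis involves the trace of (a cut-down of) $b$ rather than its dimension function, i.e.\ it is an implication of the shape ``$d_\tau(a) < \frac{1}{m+1}\,\tau(b)$ for all $\tau\in QT(A)$ implies $a \precsim b$'' for positive contractions in matrix algebras. Since $\tau(b) \le d_\tau(b)$, and the discrepancy can be large, the first implication of the lemma is not ``almost tautological'': to apply Winter's property one must, for each $\ep>0$, produce a \emph{single} $\eta>0$, valid simultaneously for all $\tau \in QT(A)$, with $d_\tau((a-2\ep)_+) < \frac{1}{m+1}\,\tau(g_{\eta,2\eta}(b))$; strong tracial $m$-comparison then gives $(a-2\ep)_+ \precsim g_{\eta,2\eta}(b) \precsim b$ for every $\ep$, hence $a \precsim b$. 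This uniformization is exactly where the sup/inf gap $\gamma\sup_\tau d_\tau(a) < \inf_\tau d_\tau(b)$, the weak$^*$ compactness of $QT(A)$, and the continuity of $\tau \mapsto \tau(g_{\eta,2\eta}(b))$ are used (the paper runs it ``as in the proof of \cite[Proposition 2.3]{Winter.Z}''); note that a merely pointwise inequality $(m+1)d_\tau(a) < d_\tau(b)$ would not suffice for this, since $\tau \mapsto d_\tau(b)$ is only lower semicontinuous.

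Ironically, the ingredients you list ($\ep$-cut-offs of $a$, compactness of $QT(A)$, semicontinuity) are the right ones, but you attach them to the wrong step: the passage from $(m,\bar m)$-pureness to strong tracial comparison, which you flag as the main obstacle, is in the paper (and need only be) a citation of \cite[Proposition 2.9]{Winter.Z}, which yields strong tracial $\tilde m$-comparison for some possibly different $\tilde m$ --- harmless, since only finiteness of the constant matters. So as written the proposal does not prove the first, and main, implication; the fix is to move your compactness/cut-off argument there, applied against the correct form of Winter's definition. (Your observation that the same constant $m+1$ works for all $M_n(A)$, giving weak comparison as well as local weak comparison, is fine and agrees with the paper.)
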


\begin{proof}   Suppose that $A$ has strong tracial $m$-comparison. We show that $M_n(A)$ has local weak comparison with $\gamma = \gamma(M_n(A)) = m+1$ for all $n$. It suffices to verify the local weak comparison for arbitrary \emph{contractions} $a,b \in M_n(A)$. Suppose that
$$\gamma \cdot \sup_{\tau\in QT(A)} d_\tau(a) <  \inf_{\tau\in QT(A)}  d_\tau(b).$$
Then, in particular, $\gamma \cdot d_\tau(a) < d_\tau(b)$ for all $\tau \in QT(A)$. 
Arguing as in the proof of \cite[Proposition 2.3]{Winter.Z} (and with $g_{\eta,\ep}$ as defined in \cite[1.4]{Winter.Z}), one obtains that for each $\ep >0$ there exists $\eta > 0$ such that
$$d_\tau((a-2\ep)_+) = d_\tau(g_{2\ep,3\ep}(a)) < \frac{1}{m+1} \cdot \tau(g_{\eta,2\eta}(b)),$$
for all $\tau \in QT(A)$. Strong tracial $m$-comparison thus implies that $(a-2\ep)_+ \precsim g_{\eta,2\eta}(b) \precsim b$ for all $\ep > 0$. This, in turns, shows that $a \precsim b$, or, equivalently, that $\langle a \rangle \le \langle b \rangle$ in $\Cu(A)$. 

It is shown in \cite[Proposition 2.9]{Winter.Z}, that if $A$ is $(m,\bar{m})$-pure, then $A$ has strong tracial $\tilde{m}$-comparison for some $\tilde{m} \in \N$. The second claim of the lemma therefore follows from the first. 
\end{proof}

\noindent It is easy to see that our local weak comparison property is weaker than strict comparison within the class of \Cs s of the form $M_n(C(X))$, where $X$ is a finite dimensional compact Hausdorff space. (A more subtle arguments shows that also our weak comparison is weaker than strict comparison within this class.)
It is an open problem if (local) weak comparison, Winter's $m$-comparison, our $\alpha$-comparison, and strict comparison all agree for \emph{simple} \Cs s, cf.\ Corollary \ref{cor:nuclear.case}.

We introduce  in Section \ref{sec:trace.kernel.ideal}, for every  $p\in [1,\infty)$, the following semi-norms  
on $A$ and its ultrapower $A_\omega$ (associated with a free ultrafilter $\omega$ on $\N$):
$$
\| \,a \, \|_{p,\tau} := \tau((a^*a)^{p/2})^{1/p}\, \qquad 
\| \,a \, \|_{p} := \sup _{\tau\in T(A)} \, \| \,a \, \|_{p,\tau}, \quad a \in A;
$$
and 
$$\|  \pi_\omega (a_1,a_2,\ldots) \|_{p,\omega}  := 
\lim_{n \to \omega} \| a_n \|_{p},$$
where $\pi_\omega \colon \ell^\infty(A) \to A_\omega$ denotes the quotient mapping. 
Let $J_A$ be the closed two-sided ideal in $A_\omega$, consisting of all elements $a \in A_\omega$ such that $\|a\|_{p,\omega} = 0$ for some (and hence all) $p \in [1,\infty)$. We call $J_A$ the \emph{trace-kernel ideal}. It will be discussed in more detail in Section \ref{sec:trace.kernel.ideal}, and so will the norms defined above. 

The \emph{central sequence algebra} $A_\omega \cap A'$ (for $A$ unital) will be denoted by $F(A)$. As explained very briefly in Section \ref{sec:sequence.algebras}, and in much more detail in \cite{Kir.AbelProc}, one can define $F(A)$ in a meaningful way also for non-unital \Cs s, such that $F(A)$, for example, always is unital. 
We reformulate the Matui--Sato definition of "excision in small central sequences" (see Definition \ref{def:Matui.Sato.excision}) to the context of ultraproducts as follows:

\begin{definition}\label{def:omega.related.excision}
A  completely positive map $\varphi \colon A\to A$ can be 
\emph{excised in small central sequences} if, for all $e,f\in F(A)$
with  $e\in J_A$ and $\sup_n \| 1- f^n \|_{2,\omega}<1$,
there exists $s\in A_\omega$ with $fs=s$ and $s^*as=\varphi (a)e$ for all $a\in A$.
\end{definition}

\noindent
We show in Lemma \ref{lem:excision-eq} that our Definition \ref{def:omega.related.excision} above is equivalent to the one of Matui and Sato.   One can replace $\| \,\cdot\, \|_{2,\omega}$ in  Definition 
\ref{def:omega.related.excision} by 
 $\| \,\cdot\, \|_{p,\omega}$ with any $p\in [1,\infty)$, even if  $QT(A) \ne T(A)$, cf.\ the comments below Definition \ref{def:seminorms}. 

Since $J_A$
is a so-called $\sigma$-ideal of $A_\omega$ (see
Definition \ref{def:sigma.ideals.etc}) one can replace the condition $e\in F(A)\cap J_A$
with the formally weaker condition $e\in J_A$. The assumption  that $f\in F(A)$, however, cannot be weakened.

We show in Proposition \ref{prop:from.comparison.to.excision} that every nuclear completely positive map $A \to A$ can be excised in small central sequences provided that $A$ is unital, separable, simple, and stably finite with the local weak comparison property. This result strengthen the results of Sections 2 and 3 in \cite{Matui.Sato}.  We prove Proposition \ref{prop:from.comparison.to.excision} without using nuclearity of $A$, or other additional requirement. In particular, this proposition can be proved without using
Section 3 of \cite{Matui.Sato} (including Lemma 3.3 of that Section, which involves deep results about von Neumann algebras).

We also reformulate Matui and Sato's property (SI) in the language of central sequence algebras.

\begin{definition}\label{def:property(SI)}
A unital simple  \Cs{} is said to have property (SI)  if
for all positive contractions $e,f\in F(A)$,
with $e\in J_A$ and $\sup_n \| 1-f^n\|_{2,\omega} <1$,
there exists $s\in F(A)$ with $fs=s$ and $s^*s=e$.
\end{definition}

\noindent  We shown in Lemma \ref{lem:SI-eq} that our Definition \ref{def:property(SI)} is equivalent to the original definition of Matui and Sato, see Definition \ref{def:Matui.Sato.SI}.
Also in Definition \ref{def:property(SI)} one can replace the semi-norm $\| \, \cdot\, \|_{2,\omega}$
 by $\| \, \cdot\, \|_{p,\omega}$
for any $p\in [1,\infty)$.

Matui and Sato proved in \cite{Matui.Sato} that if the identity map $\mathrm{id}_A$ on a unital simple \Cs{} $A$ can be excised in small central sequences, then $A$ has property (SI).  Property (SI) for non-nuclear \Cs s is mysterious;  we cannot see any connection
between comparison properties of $A$ itself and (SI) if $A$ is \emph{not} nuclear.

The importance of property (SI) is expressed in the Proposition 
\ref{prop:(SI).and.unital.image.of.I(2,3).in.F(A).mod.JA}.  This proposition, which implicitly is included in Matui and Sato's paper, \cite{Matui.Sato}, says that if $A$ has property (SI), and if $A$ is simple and unital, then the existence of a unital \sh{} $M_2 \to F(A)/(J_A \cap F(A))$ implies the existence of a unital \sh{} $I(2,3) \to F(A)$. The latter, in turns, implies that $A \cong A \otimes \cZ$ if $A$ is separable. 

From the point of view of logical completeness it would be desirable to prove that
property (SI) together with the existence of a unital \sh{} 
$\psi\colon B\to F(A)/( J_A \cap F(A))$
from any  sub-homogeneous unital \Cs{} $B$ without characters,
implies the existence of a unital \sh{} $\psi\colon C\to F(A)$
for some (possibly other)  unital recursively sub-homogeneous \Cs{} $C$
without characters. This would imply that $A\cong A\otimes \mathcal{Z}$, because $\bigotimes_{n=1}^\infty C$ contains a unital copy of $\cZ$, by  Dadarlat--Toms, \cite{DadarlatToms.Z.stability}, and because there is a unital \sh{} from  $\bigotimes_{n=1}^\infty C$ into $F(A)$, 
by \cite[Corollary 1.13 and Proposition 1.14]{Kir.AbelProc}.

It is therefore an important (open) question if $F(A)/(J_A \cap F(A))$ contains a 
sub-homogeneous unital \Cs{} that admits no characters whenever  $F(A)/(J_A \cap F(A))$  itself  has no characters.

In Section \ref{sec:partial.T(A).closed} we consider the extreme boundary $\partial T(A)$ and its weak$^*$ closure, denoted $bT(A)$, of the trace simplex $T(A)$ of a unital \Cs{} $A$. We study the ultra-power 
$$\T_\omega \colon A_\omega \to \Cf (\partial T(A))_\omega$$
of the completely positive map $\T \colon A\to \Cf (\partial T(A))$, given
by $\T(a)(\tau):=\tau (a)$ for $\tau\in T(A)$ and $a\in A$. The trace-kernel ideal $J_A$, mentioned earlier, consists of all elements $a\in A_\omega$ with $\T_\omega (a^*a)=0$.

The Choquet simplex $T(A)$ is a Bauer simplex
if and only if $\T_\omega$ maps $\mathrm{Mult}(\T_\omega  |_{F(A)})$ \emph{onto} $\Cf (b T(A))_\omega$, where $\mathrm{Mult}(\Phi  |_{F(A)})$ denotes the multiplicative domain defined by
$$\mathrm{Mult}(\T_\omega  |_{F(A)}) := \big\{ a\in F(A):  \T_\omega (a^*a)=\T_\omega (a)^*\T_\omega (a), \, \T_\omega(aa^*)=\T_\omega (a)\T_\omega (a)^* \big\}.$$ 
We do not use nuclearity of $A$ to obtain this fact. 
As the restriction of $\T_\omega$ to $\mathrm{Mult}(\T_\omega  |_{F(A)})$ is a $^*$-epimorphism, this allows us to find, for each positive function $f\in \Cf (\partial T(A))$
with $\| f\|=1$ and any compact subset $\Omega \subset A$ 
and $\varepsilon>0$,  a positive contraction
$a\in A$ with $\| [a,b] \|<\varepsilon$    
and $\| \T (aba)-f^2\T (b) \| < \varepsilon$ for $b\in \Omega$.

In Section \ref{sec:partial.T(A).fin.dim}, we shall use the existence of such elements $a$ as above, together with the assumption that $\partial T(A)$ has finite topological dimension, to assemble a collection of unital completely positive "\emph{locally} tracially almost order zero maps" $M_k \to A$ into a single unital completely positive  "\emph{globally} tracially almost order zero map" $M_k \to A$. In this way be obtain our main result, Theorem \ref{thm:(SI).implies.Z-absorbtion}.

\medskip


\section{The central sequence algebra}
\label{sec:sequence.algebras}

\noindent To each free filter $\omega$ on the natural numbers and to each \Cs{} $A$ one can associate the ultrapower $A_\omega$ and the central sequence \Cs{} $F(A)=A_\omega \cap A'$ as follows. Let $c_\omega(A)$ denote the closed two-sided ideal of the \Cs{} $\ell^\infty(A)$  given by
$$c_\omega(A) = \big\{(a_n)_{n \ge 1} \in \ell^\infty(A) \mid \lim_{n \to \omega} \|a_n\| = 0\big\}.$$
We use the notation $\lim_{n \to \omega} \alpha_n$ (and sometimes just $\lim_\omega \alpha_n$) to denote the limit of a sequence $(\alpha_n)_{n \ge 1}$ along the filter $\omega$ (whenever the limit exists).

The ultrapower $A_\omega$ is defined to be the quotient \Cs{} $\ell^\infty(A)/c_\omega(A)$; and we denote by $\pi_\omega$ the quotient mapping $\ell^\infty(A) \to A_\omega$. Let $\iota \colon A \to \ell^\infty(A)$ denote the "diagonal" inclusion mapping $\iota(a) = (a,a,a, \dots) \in \ell^\infty(A)$, $a \in A$; and put $\iota_\omega = \pi_\omega \circ \iota \colon A \to A_\omega$. Both mappings $\iota$ and $\iota_\omega$ are injective. We shall often suppress the mapping $\iota_\omega$ and view $A$ as a sub-\Cs{} of $A_\omega$. The relative commutant, $A_\omega \cap A'$, then consists of elements of the form $\pi_\omega(a_1,a_2,a_3, \dots)$, where $(a_n)_{n \ge 1}$ is a bounded asymptotically central sequence in $A$. The \Cs{} $A_\omega \cap A'$ is called a \emph{central sequence algebra}. 

We shall here insist that the free filter $\omega$ is an ultrafilter. We avoid using the algebras $A^\infty := \ell^\infty (A)/c_0(A)$ and $A_\infty:=A'\cap A^\infty$, which have similar properties
and produce similar results (up to different selection procedures).
One reasons for preferring ultrafilters is that we have an epimorphism
from $A_\omega$ onto $N^\omega:= \ell_\infty(N)/c_{\tau,\omega}(N)$, cf.\ Theorem \ref{thm:M-omega} below, where $N$ is the weak closure of $A$ in the GNS representation determined by a tracial state $\tau$ on $A$. The algebra $N^\omega:= \ell_\infty(N)/c_{\tau,\omega}(N)$ is a $W^*$-algebra when $\omega$ is a free ultrafilter, whereas 
the sequence algebra $N^\infty:=\ell_\infty(N)/c_{\tau,0}(N)$
(with $c_{\tau,0}(N)$ the bounded sequences in $N$ with 
$\lim_n \| a\|_{2,\tau}=0$)
is not a $W^*$-algebra.
Another reason for preferring free ultrafilters to general free filters is that, for $A\not=\C$, simplicity of $A_\omega$ is equivalent to  pure infiniteness and simplicity of $A$.
The algebras $A_\omega$ are the fibres of the continuous field
$A^\infty$ with base space $\beta(\N)\setminus \N$. The structure of this bundle appears to be complicated.

Recall from \cite{Kir.AbelProc} that if $B$ is a \Cs{} and if $A$ is a separable sub-\Cs{} of $B_\omega$, then we define
the  relative central sequence algebra
$$F(A,B)=(A' \cap B_\omega)/\mathrm{Ann}(A,B_\omega).$$
This \Cs{} has interesting properties. For example, $F(A,B)=F(A\otimes \cK, B\otimes \cK)$. 
We let $F(A):=F(A,A)$. The \Cs{} $F(A)$ is unital, if $A$ is $\sigma$-unital. 
If $A$ is unital, then $F(A)=A'\cap A_\omega$. We refer to \cite{Kir.AbelProc} for a detailed account on the \Cs s $F(A,B)$ and $F(A)$. We shall often, in the unital case, denote the central sequence algebra $A_\omega \cap A'$ by $F(A)$. 

We have the following useful selection principle for  sequences in the filter $\omega$
with a countable number conditions from \cite[Lemma A.1]{Kir.AbelProc}.
For completeness we add a {proof:\footnote{With one misleading typo corrected!}}
\begin{lemma}[The $\ep$-test] \label{lem:omega.selection}
Let $\omega$ be a free ultrafilter. Let $X_1,X_2,\ldots$ be any sequence of sets
and suppose  that,
for each $k\in \N$, we are given a sequence
$(f^{(k)}_n)_{n \ge 1}$ of functions $f^{(k)}_n\colon\, X_n\to [0,\infty)$.

For each $k\in \N$ define a new function $f_\omega^{(k)} \colon \prod_{n=1}^\infty X_n \to [0,\infty]$ by 
$$f_\omega^{(k)}(s_1,s_2,\ldots)
=\lim_{n \to \omega}  f_n^{(k)}(s_n), \qquad (s_n)_{n \ge 1} \in \prod_{n=1}^\infty X_n.
$$
Suppose that, for each $m\in \N$  
and each $\varepsilon >0$, there exists $s=(s_1,s_2,\ldots)\in \prod_{n=1}^\infty X_n$ such that
$f_\omega^{(k)}(s)<\varepsilon$ for $k=1,2, \dots,m$. It follows that there is 
$t=(t_1,t_2,\ldots)\in \prod_{n=1}^\infty X_n$
with $ f_\omega^{(k)}(t)=0$ for all $k\in \N$.
\end{lemma}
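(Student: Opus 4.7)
The plan is to carry out a standard diagonal selection against the countable family of conditions, using only that $\omega$ is a free filter on $\N$ (closed under finite intersection and containing all cofinite sets). First, for each $m \ge 1$, I would invoke the hypothesis with $\varepsilon = 1/m$ to obtain a sequence $s^{(m)} = (s^{(m)}_n)_n \in \prod_n X_n$ satisfying $f_\omega^{(k)}(s^{(m)}) < 1/m$ for $k = 1, \dots, m$. By the definition of the ultrafilter limit, this forces
\[
U_m \; := \; \bigcap_{k=1}^{m} \{\, n \in \N : f_n^{(k)}(s^{(m)}_n) < 1/m \,\} \; \in \; \omega;
\]
intersecting with the cofinite set $\{n \ge m\} \in \omega$ yields $S_m := U_m \cap \{n \ge m\} \in \omega$.

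Next I would diagonalize. For each $n \in \N$ set
\[
m(n) \; := \; \max\{\, m \in \{1, 2, \dots, n\} : n \in S_m \,\}
\]
whenever this maximum exists, and $m(n) := 1$ otherwise; then define $t_n := s^{(m(n))}_n \in X_n$. The crucial observation is that for every fixed $M \ge 1$, the set $\{n : m(n) \ge M\}$ contains $S_M$ (since $n \in S_M$ forces $n \ge M$, hence $M \in \{1, \dots, n\}$), and $S_M \in \omega$; so $m(n) \to \infty$ along $\omega$.

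To finish, fix $k \in \N$ and $\varepsilon > 0$, and choose an integer $M \ge k$ with $1/M < \varepsilon$. On the $\omega$-large set $\{n : m(n) \ge M\}$ we have both $m(n) \ge k$ and $n \in S_{m(n)} \subseteq U_{m(n)}$, so the definition of $U_{m(n)}$ gives
\[
f_n^{(k)}(t_n) \; = \; f_n^{(k)}\bigl(s^{(m(n))}_n\bigr) \; < \; 1/m(n) \; \le \; 1/M \; < \; \varepsilon.
\]
Thus $f_\omega^{(k)}(t) \le \varepsilon$ for every $\varepsilon > 0$, and hence $f_\omega^{(k)}(t) = 0$ as required.

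The argument is purely combinatorial and I anticipate no real obstacle; the one care-point is arranging the diagonal so that $m(n) \to \infty$ along $\omega$, and this is handled precisely by passing from $U_m$ to $S_m = U_m \cap \{n \ge m\}$. Note also that the proof uses only that $\omega$ is a free filter, not the full ultrafilter property, which matches the fact that the hypothesis already encodes the required $\omega$-limit information.
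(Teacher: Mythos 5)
Your proof is correct and follows essentially the same diagonalization as the paper's: there, $m(n)$ is defined as the largest $m\le n$ for which the set $X_{n,m}=\{s\in X_n:\max_{k\le m}f_n^{(k)}(s)<1/m\}$ is nonempty and one picks $t_n\in X_{n,m(n)}$, whereas you diagonalize directly through the chosen witnesses $s^{(m)}$ via the $\omega$-large sets $S_m$ --- the same idea with slightly different bookkeeping. The only cosmetic point is your fallback value $m(n)=1$: for $M=1$ the inclusion $n\in S_{m(n)}$ can fail on fallback indices, but this is harmless since it suffices to verify $f_\omega^{(k)}(t)\le\varepsilon$ for $\varepsilon<1$ (equivalently, always take $M\ge\max\{k,2\}$).
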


\begin{proof} For each $n \in \N$ define a decreasing sequence $(X_{n,m})_{m \ge 0}$ of subsets of $X_n$ by $X_{n,0}=X_n$ and
$$
X_{n,m}=
\big\{ s\in X_n : \max\{f_n^{(1)}(s),\ldots,f_n^{(m)}(s)\}<1/m \big\},
$$
for $m \ge 1$. We let 
$m(n):=\max \{ m\leq n : X_{n,m}\ne \emptyset \}$; and for each integer $k \ge 1$, let  $Y_k := \{n\in \N : k\le m(n)\}$. 

Fix some $k \ge 1$. By assumption there exists $s = (s_n) \in \prod_n X_n$ such that $f_\omega^{(j)}(s) < 1/k$ for $1 \le j \le k$. This entails that the set
$$Z_k :=\big\{ n \in \N :  \max\{f_n^{(1)}(s_n),\ldots,f_n^{(k)}(s_n)\}<1/k \big\}$$
belongs to $\omega$ for each $k \ge 1$.  This again implies that $X_{n,k} \ne \emptyset$ for all $n \in Z_k$; which shows that $\min\{k,n\} \le m(n) \le n$ for all $n \in Z_k$. It follows that $Z_k\setminus \{1,2, \dots, k-1\} \subseteq Y_k$, from which we conclude that $Y_k \in \omega$ (because $\omega$ is assumed to be free). Now,
$$\lim_{n \to \omega} \; \frac{1}{m(n)} \; = \; \liminf_{n \to \omega}  \; \frac{1}{m(n)}  \; \le \;  
\inf_k \sup_{n \in Y_k}  \; \frac{1}{m(n)} \; \le \; \inf_k \; \frac{1}{k} = 0.$$
By definition of $m(n)$ we can find $t_n\in X_{n,m(n)}\subseteq X_n$ for each $n \in \N$. Put $t = (t_n)_{n \ge 1}$. Then $f_n^{(k)}(t_n)\leq 1/m(n)$ for all $k< m(n)$ by definition of $X_{n,m(n)}$, so 
$$ f_\omega^{(k)}(t) \; = \; \lim_{n \to \omega} f_n^{(k)}(t_n) \; \le \;  \lim_{n \to \omega} \; \frac{1}{m(n)} \; = \; 0,$$
for all $k \ge 1$ as desired.
\end{proof}

\noindent
Let $N$ be a $W^*$-algebra with separable predual and let  $\tau$ be a faithful
normal tracial state on $N$. Consider the associated norm, $\|a\|_{2,\tau} = \tau(a^*a)^{1/2}$, $a \in N$, on $N$. Let $N^\omega$ denote the $W^*$-algebra 
$\ell_\infty(N)/c_{\omega,\tau}(N)$, where 
$c_{\omega,\tau}(N)$
consists of the bounded sequences $(a_1,a_2,\cdots )$ with
$\lim_\omega \|a_n\|_{2,\tau}=0$. (As mentioned above, if $\omega$ is a free filter, which is not an ultrafilter, then $N^\omega$ is not a $W^*$-algebra.)

\begin{rem}\label{rem:central.sequences.in.N}
Since $N$ has separable predual, $N'\cap N^\omega$ contains
a copy of the hyper-finite $\mathrm{II}_1$-factor $\mathcal{R}$ with separable predual
if and only if for each $k\in \N$ there exists a sequence
of unital *-homomorphisms $\psi_n\colon M_k\to N$ such that
$\lim_{n\to \infty} \| [\psi_n(t),a] \|_{2,\tau}=0$ for all $t\in M_k$ and $a\in N$
if and only if there is a $^*$-homomorphism $\varphi\colon M_2 \to N'\cap N^\omega$
such that $t \in N\mapsto t\cdot \varphi(1)\in N^\omega$ is faithful.
(It suffices to consider elements $t$ in the center of $N$.) 

This equivalence
was shown by D.\ McDuff, \cite{McDuff.1970}, in the case where $N$ is a \emph{factor}. She gets, moreover, that $N\cong N\overline{\otimes} \mathcal{R}$   if and only if
 $N$ has a central sequence that is not hyper-central. Such a II$_1$-factor is called a \emph{McDuff factor}.
\end{rem}

\noindent
The result below was proved by Y.\ Sato in \cite[Lemma 2.1]{Sato.1104}  in the case where $A$ is nuclear. We give here an elementary proof of this useful result, that does not assume nuclearity of $A$. The result implies that the central sequence \Cs{} $A_\omega \cap A'$ has a subquotient isomorphic to the hyperfinite II$_1$-factor $\mathcal{R}$ whenever $A$ has a factorial trace so that the corresponding II$_1$-factor, arising from GNS representation with respect to that trace, is a McDuff factor. In Remark \ref{rem:J_tau} in the next section we show how one can give an easier proof of the theorem below using that the kernel $J_\tau$ of the natural \sh{} $A_\omega \to N^\omega$ is a so-called \emph{$\sigma$-ideal}. The proof given below does not (explicitly) use $\sigma$-ideals.

\begin{thm} \label{thm:M-omega}
Let $A$ be a separable unital \Cs, let $\tau$ be a faithful tracial state on $A$, let $N$ be the weak closure of $A$ under the GNS representation of $A$ with respect to the state $\tau$, and let $\omega$ be a free ultrafilter on $\N$. It follows that the natural \sh s
$$A_\omega \to N^\omega, \qquad A_\omega \cap A' \to N^\omega \cap N',$$
are surjective.
\end{thm}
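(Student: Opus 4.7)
The first claim --- surjectivity of $A_\omega \to N^\omega$ --- follows directly from Kaplansky's density theorem with norm control: $A$ is $\|\cdot\|_{2,\tau}$-dense in the norm-unit ball of $N$. Given $x = \pi_\omega^N(x_n) \in N^\omega$ with $\|x_n\| \leq \|x\|$, pick $a_n \in A$ with $\|a_n\| \leq \|x_n\|$ and $\|a_n - x_n\|_{2,\tau} < 1/n$; then the class of $(a_n)$ in $A_\omega$ maps to $x$.

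For the central-sequence version, given $y \in N^\omega \cap N'$, use the first part to lift $y$ to some $\tilde y = \pi_\omega(\tilde y_n) \in A_\omega$. Since $[y,a] = 0$ for every $a \in A \subset N$, the commutator $[\tilde y,a] \in A_\omega$ lies in $J_\tau := \ker(A_\omega \to N^\omega)$; equivalently $\|[\tilde y_n,a]\|_{2,\tau} \to 0$ along $\omega$. The plan is to modify $\tilde y$ by an element of $J_\tau$ so as to land in $A_\omega \cap A'$. Specifically, I would produce a positive contraction $e \in A_\omega \cap A'$ with $e \in J_\tau$ and $e \cdot z = z$ (equivalently, by adjointness, $z \cdot e = z$) for every $z$ in a chosen countable dense subset of $\{[\tilde y, a] : a \in A\} \subset J_\tau$. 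Setting $y' := \tilde y(1-e)$, the computation
$$[y',a] \;=\; [\tilde y,a](1-e) - \tilde y \cdot [e,a] \;=\; [\tilde y,a] - [\tilde y,a]e \;=\; 0$$
(using $[e,a]=0$ from $e \in A'$, and $[\tilde y,a]e = [\tilde y,a]$ from the absorption property) shows $y' \in A_\omega \cap A'$; meanwhile $y' - \tilde y = -\tilde y e \in J_\tau$ since $J_\tau$ is an ideal, so $y'$ maps to $y$.

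The existence of $e$ is then reduced to the $\epsilon$-test (Lemma~\ref{lem:omega.selection}). Fix a countable dense subset $\{a_k\}_{k \geq 1}$ of the unit ball of $A$ and bounded representatives $(z_{k,n})_n$ of $[\tilde y, a_k]$. Take $X_n := \{h \in A : 0 \leq h \leq 1\}$ and test functions
\begin{align*}
f_n^{(0)}(h) &= \|h\|_{2,\tau},\\
f_n^{(k,1)}(h) &= \|[h,a_k]\|,\\
f_n^{(k,2)}(h) &= \|h z_{k,n} - z_{k,n}\| + \|z_{k,n} h - z_{k,n}\|.
\end{align*}
By the $\epsilon$-test it suffices to show that for each $m$ and $\epsilon > 0$ one can find $h \in A$ (with data indexed by $n$) making all of $f_n^{(0)}, f_n^{(k,1)}, f_n^{(k,2)}$ $<\epsilon$ for $k \leq m$, at the $\omega$-limit. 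The natural candidate is $h_n := g_{\delta_n}(w_n)$, a continuous cut-off of $w_n := \sum_{k \le m}(z_{k,n}^* z_{k,n} + z_{k,n} z_{k,n}^*)$ that is identically $1$ on $[\delta_n,\infty)$ and vanishes at $0$, with $\delta_n$ chosen so that $\tau(h_n) \leq \tau(w_n)/\delta_n \to 0$ and $\|(1-h_n)z_{k,n}\| \vee \|z_{k,n}(1-h_n)\| = O(\sqrt{\delta_n})$.

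The main obstacle is arranging $\|[h_n, a_k]\| \to 0$: the naive cut-off is not norm-central, since $g_{\delta_n}$ has Lipschitz constant $1/\delta_n \to \infty$ and the commutators $[w_n,a_k]$ involve \emph{double} commutators $[[\tilde y_n,a_j],a_k]$ which need not vanish in operator norm. Overcoming this requires a further smoothing step, exploiting that $[\tilde y,a] \in J_\tau$ for \emph{every} $a \in A$ --- not only those in the fixed finite set --- so that one can iterate the cut-off construction along a diagonal subsequence to simultaneously enforce approximate norm-centrality. Conceptually, this manoeuvre amounts to verifying that $J_\tau$ is a $\sigma$-ideal of $A_\omega$ in the sense of \cite{Kir.AbelProc}, which is exactly the content of the alternative proof advertised in Remark~\ref{rem:J_tau}; the $\epsilon$-test formulation makes it possible to run the argument without naming the abstract property.
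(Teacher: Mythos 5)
Your first part (Kaplansky with norm control) is exactly the paper's argument, and your reduction of the second part to producing a positive contraction $e\in J_\tau\cap A'$ with $e\,[\tilde y,a]=[\tilde y,a]$ is sound; it is precisely the shortcut described in Remark~\ref{rem:J_tau}, which rests on $J_\tau$ being a $\sigma$-ideal. (A small point: $e z=z$ is \emph{not} equivalent by adjointness to $z e=z$ for a single $z$; you need the absorption on a self-adjoint family, which your two-sided test functions $f_n^{(k,2)}$ do in fact provide.)

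The genuine gap is in the verification of the $\ep$-test hypothesis, i.e.\ in the existence of approximate witnesses. Your candidate $h_n=g_{\delta_n}(w_n)$ cannot be repaired by "iterating the cut-off along a diagonal subsequence": the obstruction you identify is structural, since $[w_n,a_k]$ is built from double commutators $[[\tilde y,a_j],a_k]$ which are small only in $\|\cdot\|_{2,\tau}$, never in operator norm, while the Lipschitz constant of $g_{\delta_n}$ blows up; adding more commutators to $w_n$ or diagonalizing does not change this. The missing idea is Arveson's quasicentral approximate unit: take any approximate unit of the separable ideal $J_\tau\cap C$, where $C=C^*(A,\tilde y)\subseteq A_\omega$, and pass to convex combinations to make it asymptotically central for $C$; its elements are $\|\cdot\|_{2,\tau}$-null, asymptotically commute with $A$ in norm, and asymptotically absorb the commutators $[\tilde y,a_k]\in J_\tau\cap C$, so they serve as the witnesses for your test functions, with no functional calculus or Lipschitz estimates at all. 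This is exactly how the paper proceeds, both in the proof of Proposition~\ref{rem:local.units.in.JA} (which gives the $\sigma$-ideal property you appeal to) and in the direct proof of Theorem~\ref{thm:M-omega}, where the quasicentral approximate unit of $\Ker(\widetilde{\Phi})\cap C^*(A,b)$ inside $\ell^\infty(A)$ is used and, moreover, the need for an exactly $A$-central $e$ is avoided altogether by compressing on both sides and testing the commutators $\|(1-x)b_n(1-x)a_k-a_k(1-x)b_n(1-x)\|$ directly. Without the quasicentrality input (or an equivalent convexity argument) your proof does not close; citing Remark~\ref{rem:J_tau} for this step defers precisely the point at issue.
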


\begin{proof}  Let $\pi_A$ and $\pi_N$ denote the quotient mappings $\ell^\infty(A) \to A_\omega$ and $\ell^\infty(N) \to N^\omega$, respectively. Denote the canonical map $A_\omega \to N^\omega$ by $\Phi$, and let $\widetilde{\Phi} \colon \ell^\infty(A) \to N_\omega$ denote the map $\Phi \circ \pi_A$. 

We show first that $\Phi \colon A_\omega \to N^\omega$ is onto. Indeed, if $x = \pi_N(x_1,x_2, \dots)$ is an element in $N^\omega$, then, by Kaplanski's density theorem, there exists $a_k \in A$ with $\|a_k\| \le  \|x_k\|$ and with $\|a_k-x_k\|_{2,\tau} \le 1/k$. It follows that $(a_1,a_2, \dots) \in \ell^\infty(A)$ and that  $\widetilde{\Phi}(a_1,a_2, \dots) = x$.

To prove that the natural map $A_\omega \cap A' \to N^\omega \cap N'$ is surjective, it suffices to show that if $b = (b_1,b_2, \dots) \in \ell^\infty(A)$ is such that $\widetilde{\Phi}(b) \in N^\omega \cap N'$, then there is an element $c \in \ell^\infty(A)$ such that $\pi_A(c) \in A_\omega \cap A'$ and $\widetilde{\Phi}(c) = \widetilde{\Phi}(b)$.  Let such a $b \in \ell^\infty(A)$ be given, put $B = C^*(A,b) \subseteq \ell^\infty(A)$, and put $J = \Ker(\widetilde{\Phi}) \cap B$. Notice that an element $x = (x_1,x_2 , \dots ) \in \ell^\infty(A)$ belongs to $\Ker(\widetilde{\Phi})$ if and only if $\lim_{n \to \omega} \|x_n\|_{2,\tau} = 0$. 
Notice also that $ba - a b\in J$ for all $a \in A$. 

Let $(d^{(k)})_{k \ge 1}$ be an increasing approximate unit for $J$, consisting of positive contractions, which is asymptotically central with respect to the separable \Cs{} $B$. Then
\begin{eqnarray*}
0 & = & \lim_{k \to \infty} \| (1-d^{(k)}) \big(ba-ab\big) (1-d^{(k)})\| \\
&=& \lim_{k \to \infty} \| (1-d^{(k)})b (1-d^{(k)})a- a(1-d^{(k)})b (1-d^{(k)})\|
\end{eqnarray*}
for all $a \in A$. 

We use the $\ep$-test (Lemma \ref{lem:omega.selection}) to complete the proof: Let $(a_k)_{k \ge 1}$  be a dense sequence in $A$. Let each $X_n$ be the set of positive contractions in $A$. Define $f_n^{(k)} \colon X_n \to [0,\infty)$ by
$$f_n^{(1)}(x) = \|x\|_{2,\tau}, \qquad f_n^{(k+1)}(x) = \|(1-x)b_n(1-x)a_k-a_k(1-x)b_n(1-x)\|, \; \; k \ge 1.$$
Notice that $f_\omega^{(1)}(d^{(\ell)}) = \lim_{n \to \omega}\|d_n^{(\ell)}\|_{2,\tau}= 0$ for all $\ell$, because each $d^{(\ell)}$ belongs to $J$. Note also that 
$$f_\omega^{(k+1)}(d^{(\ell)}) = \| (1-d^{(\ell)})b (1-d^{(\ell)})a_k- a_k(1-d^{(\ell)})b (1-d^{(\ell)})\|, \quad k \ge 1.$$
It is now easy to see that the conditions of the $\ep$-test in Lemma \ref{lem:omega.selection} are satisfied, so there exists a sequence $d=(d_n)_{n \ge 1}$ of positive contractions in $A$ such that  $f_\omega^{(k)}(d) = 0$ for all $k$. As $f_\omega^{(1)}(d) = 0$, we conclude that $d \in \Ker(\widetilde{\Phi})$. 

Put $c = (1-d)b(1-d)$. Then $c-b \in \Ker(\widetilde{\Phi})$, so $\widetilde{\Phi}(c) = \widetilde{\Phi}(b)$. Since $f_\omega^{(k+1)}(d) = 0$, we see that $ca_k-a_kc=0$ for all $k \ge 1$. This shows that $\pi_A(c) \in A_\omega \cap A'$. 
\end{proof}

\section{The trace-kernel ideal  $J_A$}
\label{sec:trace.kernel.ideal}

\noindent
Suppose that $A$ is a unital, separable \Cs{} that has at least one  tracial state.
Let $T(A)\subset A^*$ denote the Choquet simplex of tracial states on $A$. The topology on $T(A)$ will always be the weak$^*$ topology (or the $\sigma(A^*,A)$-topology) in which $T(A)$ is compact. 
The Choquet boundary $\partial T(A)$ (the set of extreme points in $T(A)$) is a Polish space, and it consists of factorial tracial states on $A$.

\begin{definition}[Seminorms on $A_\omega$] \label{def:seminorms}
For any non-empty subset $S$ of $T(A)$ and for any
$p\in [1,\infty)$, we define
a norm $\| \, \cdot \, \|_{p,S}$ on $A$ by 
$$\| a\|_{p,S}:= \sup_{\tau\in S}  \tau\big((a^*a)^{p/2}\big)^{1/p}=
\big[\sup_{\tau\in S}  \tau\big((a^*a)^{p/2}\big)\big]^{1/p}, \qquad a \in A.$$
We further define
$$\|a\|_p = \|a\|_{p,T(A)} = \sup_{\tau\in T(A)}  \tau\big((a^*a)^{p/2}\big)^{1/p}, \qquad a \in A.$$
If a sequence $\mathcal{S}=(S_1,S_2,\ldots )$ of subsets 
of $T(A)$  is given, 
then define seminorms on
$A_\omega$ by
$$\| a\|_{p,\mathcal{S}}:=\, 
\lim_{n \to \omega} \| a_n\|_{p, S_n}, \qquad \|  a  \|_{p,\omega}:=\| a\| _{p, \mathscr{T}},$$
for $a = \pi_\omega(a_1,a_2,a_3, \dots) \in A_\omega$; and where $\mathscr{T}:=(T(A),T(A),\ldots)$. 

If $S_n:=\{ \tau \}$ for all $n\in \N$, 
then we sometimes write  $\| a\|_{2,\tau}$ instead of $\| a\|_{2,\mathcal{S}}$, 
for $a\in A_\omega$.
\end{definition}

\noindent
For all $a \in A$  we have $\| a\|_{p,T}\leq \| a\|_{p,S} \leq \|a\|_{q,S}$
whenever $T\subseteq S \subseteq T(A)$ and $p\leq q$.
Moreover, $\| a\|_{p,T} = \| a\|_{p,S}$ if $T\subseteq S$ and
$S$ is contained in the closed convex hull of $T$. In particular, if $\partial T(A) \subseteq S$, then $\| a\|_{p,S} = \|a\|_p$. 

For each fixed non-empty $S \subseteq T(A)$, it is easy to see that $N(a):=\lim_{p\to\infty} \| a\|_{p,S}$ is a
semi-norm on $A$ with 
$$N(a)\leq \| a\|, \; \; N(ab)\leq N(a)\| b\|, \; \; N(a^*)=N(a), \; \; N( a)^2=N(a^*a)=N(aa^*).$$
Thus $\| a\|=\lim_{p\to\infty} \| a\|_{S,p}$ for $a\in A$, if $A$ is unital and simple. 

\smallskip

\begin{rem}\label{rem:partit.ultranorms}
Another equivalent description of $\| \, \cdot \, \|_{p,\omega}$ is given for $a:= \pi_\omega(a_1,a_2, \dots) \in A_\omega$ by the equation
$$\|a\|_{p,\omega} = \sup_{S \in M} \|a\|_{p,\mathcal{S}},$$
where $M$ denotes the set of all sequences $\mathcal{S} = (S_1,S_2, \dots)$, where $S_n =\{\tau_n\}$ is a singleton for all $n$, i.e., $\|a\|_{p, \mathcal{S}} = \lim_{n \to \omega} \|a_n\|_{p,\tau_n}$. Again it suffices to use sequences $(\tau_n)_{n \ge 1}$ in $\partial T(A)$. 
\end{rem}

\medskip

\begin{definition}[The trace-kernel ideal $J_A$]
Define a linear subspace $J_A$ of $A_\omega$ by
$$J_A = \{ e\in A_\omega : \| e \|_{2,\omega}=0\}.$$
Since $\| e^*\| _{2,\omega}=\| e \|_{2,\omega}$, 
$\| fe \| _{2,\omega}\leq 
\| f\| \cdot \| e \|_{2,\omega}$ and
$\| e \|_{2,\omega}\leq \| e \|$ for all $e,f \in A_\omega$
the subspace  $J_A$ is a closed
two-sided ideal of $A_\omega$.
\end{definition}

\noindent
We can in the definition of $J_A$ replace the semi-norm
 $\| \, \cdot \, \|_{2,\omega}$ by any of the semi-norms   
 $\| \, \cdot \, \|_{p,\omega}$,
 with $p\in [1,\infty)$. Indeed, for each $a \in A$, we have the inequalities
$\|a\|_{1,\tau}  \leq  \| a \|_{p,\tau}$;
and 
$$\| a \|_{p,\tau} = \Big(\| (a^*a)^{p/2} \| _{1,\tau}\Big)^{1/p}\leq  
(\| a\| _{1,\tau})^{1/p} \cdot \| a\|^{1-1/p}.$$ 
Hence the following holds for all contractions $e\in A_\omega$:
\begin{equation} \label{eq:seminorms}
\| e \|_{1,\omega}\leq \| e \|_{p,\omega}\, 
\leq \| e \|_{1,\omega}^{1/p}.
\end{equation}

\medskip
\noindent We recall some facts about $\sigma$-ideals from \cite{Kir.AbelProc}. 
The definitions below are also given in \cite[Definition 1.4 and 1.5]{Kir.AbelProc}.
\begin{definition}
\label{def:sigma.ideals.etc}
Let $B$ a \Cs{} and let $J$ be a closed two-sided ideal in $B$.
Then $J$  is  a \emph{$\sigma$-ideal} of $B$ 
if for every separable sub-\Cs{} $C$ of $B$
there exists a positive contraction $e\in J$
such that $e\in C'\cap J$ and $ec=c$ for all $c\in C\cap J$.

A short exact sequence  
$$\xymatrix{0\ar[r] & J \ar[r] &  B \ar[r]^{\pi_J} & B/J \ar[r] & 0}$$
is \emph{strongly locally semi-split},
if for every
separable sub-\Cs{} $D$ of $B/J$ there
exists a  \sh{} $\psi\colon \Cf_0((0,1],D)\to B$
with $\pi_J(\psi(\iota \otimes d))=d$ for all $d\in D$,
where $\iota \in \Cf_0((0,1])$ is given by $\iota(t)=t$, for $t\in (0,1]$.

A \Cs{} $B$  is \emph{$\sigma$-sub-Stonean} 
if for every separable sub-\Cs{} $D$ of $B$
and for every positive contractions $b,c\in B$ with
$bDc=\{ 0\}$ and $bc=0$, there exist positive contractions
$e,f\in D'\cap B$ with $eb=b$, $fc=c$, and $ef=0$.
\end{definition}

\noindent
We list some properties of $\sigma$-ideals.
\begin{prop}\label{prop:sigma.ideals.etc}
Suppose that $B$ is a \Cs, that $J$ is a $\sigma$-ideal in $B$, and that 
$C$ is a separable sub-\Cs{} of $B$.
Then $J$, $B$, and $C\,'\cap B$  have the following properties:
\begin{itemize}
\item[(i)]
$J\cap (C\,' \cap B)$ is a $\sigma$-ideal of
$C\,'\cap B$, and $C\,'\cap J$
contains an approximate
unit of $J$. \vspace{.1cm}
\item[(ii)]
If $D$ is a separable \Cs{} and 
$\varphi\colon D\to \pi_J(C)\,' \cap (B/J)$ is a \sh,
then there exists a \sh{}
$\psi\colon \Cf_0((0,1],D)\to C\,' \cap B$
with $\pi_J(\psi(\iota \otimes d))=\varphi(d)$. \vspace{.1cm}
\item[(iii)] $\pi_J(C\,' \cap B)=\pi_J(C)\,' \cap (B/J)\,\,$, and
$$
0\to\, J\cap (C\,' \cap B) \,\to\, C\,' \cap B \,\to 
(C\,' \cap B)/\bigl(J\cap (C\,' \cap B)\bigr)
\to 0
$$
is strongly locally semi-split. \vspace{.1cm}
\item[(iv)]
If  $B$ is $\sigma$-sub-Stonean,
then
$B/J$ is $\sigma$-sub-Stonean.
\end{itemize}
\end{prop}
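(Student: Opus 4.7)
The plan is to establish (i) and the surjectivity half of (iii) directly from $\sigma$-ideality of $J\subseteq B$, then to prove (ii) and derive the strongly locally semi-split half of (iii) as a special case, and finally to handle (iv) separately. In every step the same mechanism is used: apply $\sigma$-ideality of $J$ to a carefully chosen separable sub-\Cs{} of $B$ containing $C$ together with the auxiliary data of the argument, producing a positive contraction $h\in J$ that commutes with those data and acts as a two-sided unit on their intersection with $J$. Any elements of $J$ appearing in the resulting arithmetic are then annihilated by $(1-h)$.

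For (i), given a separable $C_0\subseteq C'\cap B$, apply $\sigma$-ideality of $J$ to $C^*(C\cup C_0)$ to obtain $h\in J\cap C_0'\cap C'$ acting as a unit on $C_0\cap J\cap (C'\cap B)\subseteq C^*(C\cup C_0)\cap J$; this is the $\sigma$-ideal property for $J\cap (C'\cap B)$ inside $C'\cap B$. Taking $C_0$ to be the sub-\Cs{} generated by any finite subset of $J$ delivers the approximate-unit assertion. For surjectivity in (iii), the nontrivial inclusion is handled as follows: given $\bar b\in \pi_J(C)'\cap (B/J)$ with a lift $b\in B$, apply $\sigma$-ideality to $C^*(C\cup\{b\})$; the resulting $h\in J$ acts as a unit on each $[c,b]\in C^*(C,b)\cap J$, so $[c,b(1-h)]=[c,b](1-h)=0$, while $\pi_J(b(1-h))=\bar b$ because $\pi_J(h)=0$.

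For (ii), the just-proved surjectivity yields a bounded *-preserving continuous section $\tau\colon \varphi(D)\to C'\cap B$ of $\pi_J|_{C'\cap B}$ (by Bartle--Graves, using that $\varphi(D)$ is separable); this $\tau$ need not be linear or multiplicative. Apply $\sigma$-ideality of $J$ to $C^*(C\cup \tau(\varphi(D)))$ to obtain $h\in J$ commuting with $C$ and $\tau(\varphi(D))$ and acting as a unit on the intersection of this algebra with $J$. Define a map on elementary tensors by
$$
\psi(f\otimes d):=f(1-h)\,\tau(\varphi(d)).
$$
The additive defect $\tau(\varphi(d_1)+\varphi(d_2))-\tau\varphi(d_1)-\tau\varphi(d_2)$ and the multiplicative defect $\tau\varphi(d_1)\tau\varphi(d_2)-\tau\varphi(d_1d_2)$ both lie in $C^*(C\cup\tau\varphi(D))\cap J$, since $\pi_J$ kills them (using that $\varphi$ is a *-hom). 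Therefore $(1-h)$ annihilates them; combined with $f(0)=0$ for $f\in C_0((0,1])$---so that $f$ is a uniform limit of polynomials $p_n(t)=t\,q_n(t)$, giving $f(1-h)=\lim(1-h)\,q_n(1-h)$---it follows that $f(1-h)\cdot[\text{defect}]=0$. Hence $\psi$ is linear in $d$ and multiplicative on elementary tensors; *-preservation is immediate from the *-preservation of $\tau$. Boundedness of $\tau$ makes $\psi$ bounded on the algebraic tensor product $C_0((0,1])\odot D$, and $\psi$ extends continuously to a *-hom $\psi\colon C_0((0,1],D)\to C'\cap B$; by construction $\pi_J(\psi(\iota\otimes d))=\varphi(d)$. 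The strongly locally semi-split half of (iii) is then (ii) applied to the inclusion of a separable sub-\Cs{} of $\pi_J(C)'\cap (B/J)$, after identifying the latter with $(C'\cap B)/\bigl(J\cap (C'\cap B)\bigr)$ via (i) and the surjectivity part of (iii).

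For (iv), lift a separable $\bar D\subseteq B/J$ to a separable $D\subseteq B$ with $\pi_J(D)=\bar D$, and lift $\bar b,\bar c$ to positive contractions $b,c\in B$; then $bDc\subseteq J$ and $bc\in J$. Apply $\sigma$-ideality of $J$ to $C^*(D\cup\{b,c\})$ to obtain $h\in J$ commuting with this algebra and acting as a unit on its intersection with $J$. The cut-downs $b_0:=(1-h)b(1-h)$ and $c_0:=(1-h)c(1-h)$ are positive contractions lifting $\bar b,\bar c$; moreover $b_0c_0=(1-h)^4bc=0$ and $b_0dc_0=(1-h)^4bdc=0$ for all $d\in D$, because $h$ acts as a unit on $bc,\,bdc\in J$. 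The $\sigma$-sub-Stonean property of $B$, applied to $D$ with the orthogonal pair $b_0,c_0$, furnishes positive contractions $e,f\in D'\cap B$ with $eb_0=b_0$, $fc_0=c_0$, and $ef=0$; their images $\bar e:=\pi_J(e),\ \bar f:=\pi_J(f)$ satisfy the required conditions in $B/J$. The main technical obstacle is the multiplicativity verification in (ii): because the section $\tau$ is in general neither linear nor multiplicative, the conclusion that $\psi$ extends to a genuine *-hom rests essentially on the simultaneous availability of $(1-h)$-annihilation of the $J$-valued defects and the cone condition $f(0)=0$.
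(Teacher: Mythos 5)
Your proof is correct, and it is genuinely more self-contained than the paper's: the paper treats (i) as immediate from the definition and simply cites Kirchberg's Abel Symposium paper (Propositions 1.3 and 1.6 there, proofs in its Appendix B) for (ii)--(iv), whereas you reprove everything directly from the definition of a $\sigma$-ideal. Your mechanism---apply $\sigma$-ideality to a separable algebra containing $C$ together with the auxiliary data, and let $1-h$ annihilate all $J$-valued defects---is the cut-down idea underlying Kirchberg's appendix, but your route to (ii), namely first proving the surjectivity half of (iii) and then taking a Bartle--Graves/Michael continuous section $\tau$ of $\pi_J|_{C'\cap B}$ so that $\tau(\varphi(D))$ is a \emph{separable} family of lifts whose failure of linearity and multiplicativity is repaired by the cone variable, is a transparent alternative to citing the reference: it makes visible why it is the cone $\Cf_0((0,1],D)$ rather than $D$ itself that lifts (the condition $f(0)=0$ is exactly what converts the $(1-h)$-annihilation of the defects into honest bilinearity and multiplicativity of $\psi$ on elementary tensors). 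The citation buys brevity; your argument buys a self-contained proof and an explicit description of the lift. One justification should be reworded: boundedness of $\tau$ only bounds $\psi$ with respect to the projective norm on $\Cf_0((0,1])\odot D$, which does not control the C$^*$-norm, so it does not by itself yield the extension to $\Cf_0((0,1],D)$. The correct (and easy) reason is the one your own computation already provides: $\psi$ is a $^*$-homomorphism on the algebraic tensor product, and any $^*$-homomorphism from the algebraic tensor product of two \Cs s into a \Cs{} is automatically contractive for the maximal C$^*$-norm, which coincides with the minimal one because $\Cf_0((0,1])$ is nuclear; boundedness of $\tau$ is then not needed, while its continuity is what you actually use (to keep $\tau(\varphi(D))$, hence $C^*(C\cup\tau(\varphi(D)))$, separable). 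With that rewording, (i)--(iv) all check out; in particular your treatment of (iv)---cutting the lifted pair $b,c$ down by $(1-h)$ so that the orthogonality relations become exact before invoking $\sigma$-sub-Stoneanness of $B$---is exactly the expected argument.
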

\begin{proof} Part (i) follows immediately from the definition.
Parts (ii) and (iii) follow from
 \cite[prop.1.6, proof in appendix B, p.221]{Kir.AbelProc}.
See \cite[prop.1.3, proof in appendix B, p.219]{Kir.AbelProc}
for (iv).
\end{proof}

\begin{prop}\label{rem:local.units.in.JA} 
Let $A$ be a unital \Cs{} with $T(A) \ne \emptyset$, and let $\omega$ be a free filter. It follows that the trace-kernel ideal $J_A$ is a $\sigma$-ideal in $A_\omega$,
and that $A_\omega$ and $F(A)$ are $\sigma$-sub-Stonean \Cs s. If $A$ is separable, then $J_A \cap F(A)$ is a $\sigma$-ideal in $F(A)$.
\end{prop}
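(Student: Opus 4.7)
My plan is to prove the proposition by combining the $\varepsilon$-test (Lemma~\ref{lem:omega.selection}) with Arveson's quasicentral approximate unit theorem, with the substantive work concentrated in part (i); parts (ii) and (iii) then follow from standard machinery, respectively from \cite{Kir.AbelProc} and from Proposition~\ref{prop:sigma.ideals.etc}.

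For (i), fix a separable sub-\Cs{} $C$ of $A_\omega$. The central observation is that $C \cap J_A$ is a closed two-sided ideal of the separable \Cs{} $C$, so Arveson's theorem produces an increasing approximate unit $(h_k)_{k \ge 1}$ of positive contractions in $C \cap J_A$ that is quasicentral in $C$. Crucially, since each $h_k \in J_A$, we automatically have $\|h_k\|_{2,\omega} = 0$: the Arveson approximate unit is trace-small for free, so no explicit functional-calculus construction is required. This is the main conceptual point---rather than attempting to build a trace-small quasicentral approximate unit at the level of $A_\omega$, one steps back and applies Arveson inside $C$ itself, where ``trace-small'' is a property of the \emph{ambient} algebra $J_A$.

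To promote $(h_k)$ to a single element $e \in C' \cap J_A$ acting as the identity on $C \cap J_A$, I apply the $\varepsilon$-test with $X_n$ the set of positive contractions in $A$ and test functions attached to dense sequences $(c_j)_j$ in $C$ and $(d_j)_j$ in $C \cap J_A$ with representatives $c_j = \pi_\omega(c_j^{(n)})$, $d_j = \pi_\omega(d_j^{(n)})$:
\[
f_n^{(0)}(e) = \|e\|_{2,T(A)},\quad f_n^{(2j)}(e) = \|[e, c_j^{(n)}]\|,\quad f_n^{(2j+1)}(e) = \|e d_j^{(n)} - d_j^{(n)}\| + \|d_j^{(n)} e - d_j^{(n)}\|.
\]
To verify the hypothesis of Lemma~\ref{lem:omega.selection} at truncation $m$ and precision $\varepsilon > 0$, pick $M$ large enough that $\|[h_M, c_j]\| < \varepsilon$ and $\|h_M d_j - d_j\| < \varepsilon$ for $j \le m$ (possible by quasicentrality of $(h_k)$ in $C$ and its approximate-unit property in $C \cap J_A$), lift $h_M$ to a representative $(h_M^{(n)})$ of positive contractions in $A$, and set $e_n := h_M^{(n)}$. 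Since $\|h_M\|_{2,\omega} = 0$, each of the finitely many $\omega$-limits $f_\omega^{(k)}((e_n))$ is strictly below $\varepsilon$. Lemma~\ref{lem:omega.selection} then delivers a single sequence $(e_n)$ with $f_\omega^{(k)}((e_n)) = 0$ for every $k$; the resulting element $e := \pi_\omega(e_n) \in A_\omega$ is a positive contraction commuting with every $c_j$, satisfying $e d_j = d_j = d_j e$, and with $\|e\|_{2,\omega} = 0$. By density $e \in C' \cap J_A$ and $ec = c$ for all $c \in C \cap J_A$, which proves (i).

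For (ii), the $\sigma$-sub-Stoneanness of $A_\omega$ is the content of \cite[Proposition 1.3 and Appendix B]{Kir.AbelProc}, proved by an analogous $\varepsilon$-test using, for the orthogonality $ef = 0$, the identity $f_\delta(b) \cdot dc = 0$ whenever $b \cdot dc = 0$ (which follows from $f_\delta(0) = 0$ and the factorization $f_\delta(t) = t g_\delta(t)$, forcing $f_\delta(b) d f_\delta(c) = f_\delta(b) \cdot dc \cdot g_\delta(c) = 0$). The $\sigma$-sub-Stoneanness of $F(A)$ reduces to that of $A_\omega$: for separable $D \subseteq F(A) = A' \cap A_\omega$ and positive contractions $b, c \in F(A)$ with $bDc = 0 = bc$, the elements of $D$ commute with $A$, so $\tilde D := C^*(A, D) \subseteq A_\omega$ is the norm-closure of $A \cdot D$; together with $b, c \in A'$ this upgrades $bDc = 0$ to $b \tilde D c = 0$. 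Applying the sub-Stoneanness of $A_\omega$ to $\tilde D$ produces $e, f \in \tilde D' \cap A_\omega \subseteq A' \cap A_\omega = F(A)$ with the required properties. Finally, (iii) is the immediate application of Proposition~\ref{prop:sigma.ideals.etc}(i) to the $\sigma$-ideal $J_A$ of $A_\omega$ from (i) and to the separable sub-\Cs{} $A$ of $A_\omega$: $J_A \cap F(A) = J_A \cap (A' \cap A_\omega)$ is a $\sigma$-ideal of $F(A) = A' \cap A_\omega$.
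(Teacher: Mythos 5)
Your proof is correct and follows essentially the same route as the paper: for the $\sigma$-ideal property the paper likewise takes an approximate unit of $J_A\cap C$ that is asymptotically central (quasicentral) in $C$ --- hence automatically trace-small because it lies in $J_A$ --- and promotes it to a single element $e\in J_A\cap C'$ with the $\varepsilon$-test, while the $\sigma$-sub-Stonean statements are quoted from \cite[Proposition 1.3]{Kir.AbelProc} and part (iii) is obtained from Proposition \ref{prop:sigma.ideals.etc}\,(i) exactly as you do. The only cosmetic differences are that the paper tests against a single strictly positive element of $J_A\cap C$ rather than a dense sequence, and that it cites \cite{Kir.AbelProc} for the $\sigma$-sub-Stoneanness of $F(A)$ directly instead of reducing it to that of $A_\omega$ as in your argument (which is also valid).
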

\begin{proof} The fact that $A_\omega$ and $F(A)$  are  
$\sigma$-sub-Stonean was shown in \cite[Proposition 1.3]{Kir.AbelProc}.

We show below that $J_A$ is a $\sigma$-ideal in $A_\omega$. It will then follow from Proposition \ref{prop:sigma.ideals.etc} (i) that $J_A \cap F(A)$ is a $\sigma$-ideal in $F(A)$, if $A$ is separable. 

Let $C$ be a separable sub-\Cs{} of $A_\omega$ and let $d$ be a strictly positive contraction in the separable \Cs{} $J_A \cap C$.  It suffices to find a positive contraction $e \in J_A \cap C'$ such that $ed=d$.

Take contractions  $(c^{(k)}_1,c^{(k)}_2,\ldots)\in  \ell^\infty(A)$ such that $c^{(k)}:= \pi_\omega (c^{(k)}_1,c^{(k)}_2,\ldots)$ is a
dense sequence in the unit ball of $C$. Let
$(d_1,d_2, \ldots)\in \ell^\infty(A)$ be a representative of $d$. Let $(e^{(k)})_{k \ge 1}$ be an increasing approximate unit for $J_A \cap C$, consisting of positive contractions, which is asymptotically central for $C$. Let $(e_1^{(k)}, e_2^{(k)}, e_3^{(k)}, \dots) \in \ell^\infty(A)$ be a representative for $e^{(k)}$. 

We use Lemma \ref{lem:omega.selection} to show that there is $e$ with the desired properties. Let each $X_n $ be the set of positive contractions in $A$, and define functions $f_n^{(k)} \colon X_n \to [0,\infty)$ by
$$
f^{(1)}_n(x)=  \|(1-x)d_n \|, \qquad
f^{(2)}_n(x)= \| x\|_2, \qquad
f^{(k+2)}_n (x)=\| c^{(k)}_n x - xc^{(k)}_n \|, \quad k \ge 1.
$$
Note that $f^{(2)}_\omega\big((e_1^{(\ell)},e_2^{(\ell)},e_3^{(\ell)}, \dots)\big)= \| e^{(\ell)}\|_{2, \omega} = 0$ for all $\ell$, and that 
\begin{eqnarray*}
 \lim_{\ell \to \infty} \; f^{(1)}_\omega\big((e_1^{(\ell)},e_2^{(\ell)},e_3^{(\ell)}, \dots)\big) 
&=& \lim_{\ell \to \infty} \; \|(1-e^{(\ell)})d\| \;= \;0, \\
 \lim_{\ell \to \infty} \; f^{(k+2)}_\omega \big((e_1^{(\ell)},e_2^{(\ell)},e_3^{(\ell)}, \dots)\big) 
&=&\lim_{\ell \to \infty} \;  \| c^{(k)} e^{(\ell)}- e^{(\ell)}c^{(k)} \| \;= \;0.
\end{eqnarray*}
Lemma \ref{lem:omega.selection} therefore shows that there is a sequence, $(e_n)_{n \ge 1}$, of positive contractions in $A$ such that
\begin{equation} \label{eq:zero}
f_\omega^{(k)}(e_1, e_2, e_3, \dots) = 0
\end{equation}
for all $k$. The element $e = \pi_\omega(e_1, e_2, e_3, \dots)$ therefore belongs to $J_A$ (because \eqref{eq:zero} holds for $k=2$), it commutes with all elements $c^{(k)}$  (because \eqref{eq:zero} holds for all $k \ge 3$), and hence $e$ commutes with all elements in $C$; and $ed=d$  (because \eqref{eq:zero} holds for $k=1$).
\end{proof}

\begin{rem} \label{rem:J_tau}
Let $A$ be a unital \Cs{} with $T(A) \ne \emptyset$. For any family $\mathcal{S} = (S_n)_{n\ge 1} $ of non-empty subsets of $T(A)$ one can define the ideal $J_{\mathcal{S}}$ of $A_\omega$ to be the closed two-sided ideal of those $a \in A_\omega$ for which $\|a\|_{2,\mathcal{S}} = 0$. By almost the same argument as in the proof of the previous proposition one can show that $J_{\mathcal{S}}$ is a $\sigma$-ideal in $A_\omega$. (One need only replace $f_n^{(2)}$ with $f_n^{(2)}(x) = \|x\|_{2,S_n}$.)

In particular, $J_\tau$ is a $\sigma$-ideal in $A_\omega$ for each $\tau \in T(A)$ (and where $J_\tau = J_{\mathcal{S}}$ with  $S_n =\{\tau\}$ for all $n$). In other words, $a \in J_\tau$ if and only if $\|a\|_{2,\tau} = 0$. 

One can use the fact that $J_\tau$ is a $\sigma$-ideal in $A_\omega$ to give a shorter proof of Theorem \ref{thm:M-omega}: Follow the first part of the proof of  Theorem \ref{thm:M-omega} to the place where it is shown that the natural map 
$\Phi \colon A_\omega  \to N^\omega$
 is surjective. Note the the kernel of $\Phi$ is equal to $J_\tau$.
We must show that if $b \in A_\omega$ is such that $\Phi(b) \in  N^\omega \cap N'$, then there exists $c \in A_\omega \cap A'$ such that $\Phi(c)  = \Phi(b)$. Let $B$ be the separable sub-\Cs{} of $A_\omega$ generated by $A$ and the element $b$. Then, by the fact that $J_\tau$ is a $\sigma$-ideal in $A_\omega$, there is a positive contraction $e \in J_\tau \cap B'$ such that $ex=x$ for all $x \in J_A \cap C$. Put $c = (1-e)b(1-e)$. Then $c-b \in J_\tau$, so $\Phi(c) = \Phi(b)$. For each $a \in A$ we have $ab-ba \in \mathrm{Ker}(\Phi) = J_\tau$,  because $\Phi(b) \in N^\omega \cap N'$. Hence, $ab-ba \in J_\tau \cap C$, so 
$$0 = (1-e)(ab-ba)(1-e) =a(1-e)b(1-e)-(1-e)b(1-e)a = ac-ca.$$
This shows that $c \in A_\omega \cap A'$.
\end{rem}


\section{Excision in small central sequences and property (SI)}
\label{sec.excision}

\noindent In this section we show that "excision in small central sequences" (cf.\ Definition \ref{def:omega.related.excision}) holds under the minimal assumptions that the ambient \Cs{} $A$ is unital and has the local weak comparison property (cf.\ Definition \ref{def:weak.comparison}) and that the completely positive map (to be excised) is nuclear. We do not assume that $A$ is nuclear. The main forte of our approach is that we do not use \cite[Lemma 3.3]{Matui.Sato} (or other parts Section 3 of \cite{Matui.Sato}); and that we hence obtain "excision in small central sequences"  property using less machinery and under fewer assumptions. The crucial property (SI) follows from "excision in small central sequences"  applied to the identity map as in Matui--Sato, \cite{Matui.Sato}.  The identity map is nuclear if and only if the \Cs{} is nuclear, so our methods give property (SI) only for nuclear \Cs s, but under a weaker comparability condition. 

We remind the reader of the original definition of property (SI) from 
\cite[Definition~4.1]{Matui.Sato.CMP}.

\begin{definition}[Matui--Sato]
\label{def:Matui.Sato.SI}
A separable \Cs{} $A$ with $T(A) \ne \emptyset$ is said to have property (SI) if, for any central sequences 
$(e_n)_{n \ge 1}$ and $(f_n)_{n \ge 1}$ of positive contractions in $A$ satisfying
\begin{equation} \label{eq:SI1}
\lim_{n\to\infty} \max_{\tau\in T(A)} \tau(e_n) = 0, \qquad 
\lim_{m\to \infty} \liminf_{n\to \infty}  \min_{\tau\in T(A)} \tau(f_n^m) > 0,
\end{equation}
there exists a central sequence $(s_n)_{n \ge 1}$ in $A$ such that
\begin{equation} \label{eq:SI2}
\lim_{n \to \infty} \|s_n^*s_n - e_n\| = 0, \qquad \lim_{n\to \infty} \| f_ns_n - s_n\|=0.
\end{equation} 
\end{definition}

\medskip
\noindent The definition above  is equivalent with our Definition \ref{def:property(SI)} (expressed in terms of the central sequence algebra), as well as to the "local" definition of (SI) in part (ii) of the lemma below.   Notice that 
\begin{equation} \label{eq.trace.estimates}
\max_{\tau \in T(A)} \tau(e) = \|e\|_1, \qquad \min_{\tau \in T(A)} \tau(f) = 1-\|1-f\|_1,
\end{equation}
for all positive elements $e \in A$, and for all positive contractions $f \in A$. 
\smallskip

\begin{lemma} \label{lem:SI-eq}
The following conditions are equivalent for any unital, simple, separable \Cs{} $A$ with $T(A) \ne \emptyset$:
\begin{itemize}
\item[(i)] $A$ has property (SI), in the sense of Definition \ref{def:Matui.Sato.SI}
\item[(ii)] For all finite subsets $F \subset A$, for all $\ep > 0$, and for all $0 < \rho < 1$, there exist $\delta > 0$, a finite subset $G \subset A$, and $N \in \N$, such that for all positive contractions $e,f \in A$ with
$$
\max_{a \in G} \big(\|[e,a]\| + \| [f,a]\|\big) < \delta, 
\qquad \|e\|_1 <  \delta, \qquad \max_{1 \le k \le N} \|1-f^k\|_1 \le 1- \rho,
$$
there exists $s \in A$ such that
$$ \max_{a \in F} \|[s,a]\| < \ep, \qquad \|(1-f)s\| < \ep, \qquad  \|s^*s-e\| < \ep.$$
\item[(iii)] $A$ has property (SI) in the sense of Definition \ref{def:property(SI)} for every/some free ultrafilter. 
\end{itemize}
\end{lemma}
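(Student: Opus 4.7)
The plan is a cyclic argument (i)$\Rightarrow$(ii)$\Rightarrow$(iii)$\Rightarrow$(i), based on three ingredients: the identities \eqref{eq.trace.estimates} which convert $\max_\tau\tau(\cdot)$ and $\min_\tau\tau(\cdot)$ into $\|\cdot\|_1$ and $1-\|1-\cdot\|_1$; the seminorm comparison \eqref{eq:seminorms}, which (for contractions) makes $\|\cdot\|_{2,\omega}=0$ interchangeable with $\|\cdot\|_{1,\omega}=0$ and gives $\|\cdot\|_{2,\omega}\le\|\cdot\|_{1,\omega}^{1/2}$; and the $\ep$-test (Lemma~\ref{lem:omega.selection}), which converts a sequence of finitely many approximate tasks into a single exact $\omega$-solution.

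For \textbf{(i)$\Rightarrow$(ii)}, I would argue by contradiction. If (ii) fails for some triple $(F,\ep,\rho)$, then, choosing a dense sequence $(a_k)\subset A$ and setting $G_k=\{a_1,\dots,a_k\}$, $\delta_k=1/k$, $N_k=k$, one obtains witnesses $(e_k,f_k)$ violating the conclusion of (ii) at each level. Using \eqref{eq.trace.estimates}, the sequences $(e_k)$ and $(f_k)$ are asymptotically central and satisfy \eqref{eq:SI1} (with $\liminf$ bounded below by $\rho$ for each fixed $m$, since $\max_{k\le N}\|1-f^k\|_1=\|1-f^N\|_1$ for a positive contraction $f$). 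Hence (i) produces a central sequence $(s_k)$ satisfying \eqref{eq:SI2}; for $k$ large enough, $s_k$ meets the conclusion of (ii) for $(e_k,f_k)$, contradicting the choice of witnesses.

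For \textbf{(ii)$\Rightarrow$(iii)}, let $e,f\in F(A)$ be positive contractions with $e\in J_A$ and $\sup_n\|1-f^n\|_{2,\omega}\le 1-\eta<1$; by \eqref{eq:seminorms} the same bound holds with $\|\cdot\|_{1,\omega}$. Lift $e,f$ to sequences $(e_n),(f_n)$ of positive contractions in $A$. I would then invoke the $\ep$-test on $X_n=\{x\in A:\|x\|\le 2\}$ with tests $f_n^{(1)}(x)=\|x^*x-e_n\|$, $f_n^{(2)}(x)=\|(1-f_n)x\|$, and $f_n^{(k+2)}(x)=\|[x,a_k]\|$, where $(a_k)\subset A$ is dense. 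For any finite subcollection of tests and tolerance $\ep'>0$, applying (ii) with $F=\{a_1,\dots,a_m\}$, $\ep=\ep'$, and $\rho=\eta/2$ produces, for $\omega$-many $n$, an element $s_n\in A$ meeting those tests, because the hypothesis of (ii) on $(e_n,f_n)$ holds on an $\omega$-large set of indices. The $\ep$-test then supplies a single sequence $(s_n)$ with all test-$\omega$-limits zero, and $s=\pi_\omega(s_n)\in F(A)$ satisfies $fs=s$ and $s^*s=e$.

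For \textbf{(iii)$\Rightarrow$(i)}, note first that since $f^m$ is nonincreasing in $m$ for $0\le f\le 1$, the hypothesis $\lim_m\liminf_n\min_\tau\tau(f_n^m)>0$ is equivalent, via \eqref{eq.trace.estimates}, to $\sup_m\limsup_n\|1-f_n^m\|_1<1$; this bounds $\sup_m\|1-f^m\|_{1,\omega}$, and hence (by \eqref{eq:seminorms}) also $\sup_m\|1-f^m\|_{2,\omega}$, away from $1$, while $e=\pi_\omega(e_n)\in J_A$. So (iii) yields $s\in F(A)$ with $fs=s$ and $s^*s=e$. The \emph{main obstacle} is upgrading the $\omega$-convergence of a lift of $s$ to the ordinary $n\to\infty$ convergence demanded by \eqref{eq:SI2}; I would sidestep it by first proving (iii)$\Rightarrow$(ii) via the same contradiction scheme as (i)$\Rightarrow$(ii) (the $\omega$-lift of a counterexample to (ii) directly violates (iii), since the witness $(e_n,f_n)$ induces $e,f\in F(A)$ meeting the hypothesis of (iii), whose conclusion lifts to witnesses for (ii) on an $\omega$-large set), and then closing the cycle by a diagonal argument (ii)$\Rightarrow$(i) that produces $(s_n)$ in blocks with parameters $\ep_k=1/k$, $F_k=\{a_1,\dots,a_k\}$, and $\rho$ taken from the hypothesis \eqref{eq:SI1}.
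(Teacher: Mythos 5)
Your proposal is correct, and its first two legs coincide with the paper's proof: (i)$\Rightarrow$(ii) is the same contrapositive argument with $\delta=1/n$, $G=G_n$, $N=n$, and (ii)$\Rightarrow$(iii) is the same application of the $\ep$-test (Lemma \ref{lem:omega.selection}) with the identical families of test functions, using \eqref{eq.trace.estimates} and \eqref{eq:seminorms} to pass between $\|\cdot\|_{1,\omega}$ and $\|\cdot\|_{2,\omega}$. Where you genuinely differ is in closing the cycle: the paper proves (iii)$\Rightarrow$(i) directly, by negating (i) --- which forces it to extract a tolerance $\delta$, a finite set $F$ and a subsequence $(n_\ell)$ witnessing the failure of \eqref{eq:SI2} for every $s\in A$, a step that conceals a small diagonal argument --- and then reads off the contradiction on an $\omega$-large set. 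You instead prove (iii)$\Rightarrow$(ii) by the same ultrafilter contradiction (in fact slightly cleaner, since the negation of (ii) hands you the uniform obstruction with fixed $F$, $\ep$, $\rho$ directly, with no subsequence extraction) and then close with an explicit block/diagonal argument (ii)$\Rightarrow$(i) with parameters $\ep_k=1/k$, $F_k=\{a_1,\dots,a_k\}$; this costs one extra implication but makes the passage from $\omega$-statements back to honest $n\to\infty$ limits completely transparent. Both routes treat the ``every/some'' clause in (iii) the same way: (ii)$\Rightarrow$(iii) works for every free ultrafilter, while your (iii)$\Rightarrow$(ii) uses (iii) only for some ultrafilter. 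The one detail to spell out in your (ii)$\Rightarrow$(i) step is that $\|s_n^*s_n-e_n\|<1/k$ keeps the elements $s_n$ uniformly bounded, so the commutator estimates against the finite sets $F_k$ extend by density to give a genuinely central sequence; with that remark added, the argument is complete.
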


\noindent The proof of the implication of (ii) $\Rightarrow$ (iii) below is valid for every ultrafilter $\omega$, whereas the proof of the implication of (iii) $\Rightarrow$ (i) only requires (iii) to hold for some ultrafilter.

\begin{proof} (i) $\Rightarrow$ (ii). Suppose that (ii) does not hold. Let $(G_n)_{n \ge 1}$ be an increasing sequence of finite subsets of $A$ whose union is dense in $A$.  The negation of (ii) implies that  there exists a finite subset $F \subset A$, and there exist $\ep > 0$ and $0 < \rho < 1$, such that, for $\delta = 1/n$, $G=G_n$ and $N=n$ (where $n$ is any natural number), there exist positive contractions $e_n,f_n \in A$ satisfying
$$
\max _{a \in G_n} \big(\|[e_n,a]\| + \| [f_n,a]\|\big) < 1/n, 
\qquad \|e_n\|_1 <  1/n, \qquad \max_{1 \le k \le n} \|1-f_n^k\|_1 \le 1- \rho,
$$
while there is no $s \in A$ such that 
\begin{equation} \label{eq:SI-cond}
\max_{a \in F} \|[s,a]\| < \ep, \qquad \|(1-f_n)s\| < \ep, \qquad  \|s^*s-e_n\| < \ep.
\end{equation}
The sequences $(e_n)_{n \ge 1}$ and $(f_n)_{n \ge 1}$ will satisfy the conditions in  \eqref{eq:SI1}. However, if $(s_n)_{n \ge 1}$ were a central sequence satisfying  \eqref{eq:SI2}, then $s_n$ would satisfy \eqref{eq:SI-cond} for some $n$. Hence (i) does not hold.

(ii) $\Rightarrow$ (iii). Let $\omega$ be any free ultrafilter on $\N$, and suppose that (ii) holds. Recall that $F(A) = A_\omega \cap A'$ (the ultrafilter $\omega$ is suppressed in our notation\footnote{This is justified, because $A_\omega \cong A_{\omega'}$ for any two ultrafilters $\omega$ and $\omega'$, if $A$ is separable and if the continuum hypothesis holds. Moreover, without assuming the continuum hypothesis, but assuming the axiom of choice, which is needed for the existence of free ultrafilters, the isomorphism classes of separable unital sub-\Cs s of $A_\omega$ (and of $F(A)$) are the same for all free ultrafilters.}). Let $e,f \in F(A)$ be positive contractions such that $e \in J_A$ and $\sup_k \|1-f^k\|_{1,\omega} < 1$; and let $(e_n)_{n \ge 1}$ and $(f_n)_{n \ge 1}$ be positive contractive lifts in $\ell^\infty(A)$ of $e$ and $f$. It follows that 
$$\rho_0 := \sup_k \lim_{n \to \omega}\inf_{\tau \in T(A)} \tau(f_n^k) = 1- \sup_k \|1-f^k\|_{1,\omega} > 0,$$
cf.\ \eqref{eq.trace.estimates}. Let $(a_k)_{k \ge 1}$ be a dense sequence in $A$. We use Lemma \ref{lem:omega.selection} to find $s \in F(A)$ such that $fs=s$ and $s^*s = e$. This will then show that $A$ has property (SI) in the sense of Definition \ref{def:property(SI)}. Let each $X_n$ be the set of all elements in $A$ of norm at most $2$, and let $f^{(k)}_n \colon X_n \to [0,\infty)$ be defined by 
$$f^{(1)}_n(x) = \|(1-f_n)x\|, \qquad f^{(2)}_n(x) = \|x^*x-e_n\|, \qquad f^{(k+2)}_n(x) = \|[x,a_k]\|, \; k \ge 1.$$
We wish to find $\bar{s}=(s_1,s_2,s_3, \dots) \in \ell^\infty(A)$ such that $f^{(k)}_\omega(\bar{s}) = 0$ for all $k \ge 1$. It will then follow that $s = \pi_\omega(\bar{s})$ satisfies $s \in F(A)$, $fs = s$ and $s^*s=e$, and we are done. By Lemma \ref{lem:omega.selection} it suffices to show that, for each $\ep > 0$ and each integer $m \ge 1$, there exists $\bar{s} \in \ell^\infty(A)$ such that $f^{(k)}_\omega(\bar{s}) \le \ep$ for $1 \le k \le m+2$. Fix such $\ep >0$ and $m \ge 1$.

Let $\delta > 0$, $G \subset A$, and $N \in \N$ be associated, as prescribed in (ii), to the triple consisting of $F=\{a_1,a_2, \dots, a_m\}$, $\ep$ (as above), and $\rho=\rho_0/2$. Let $X$ be the set of all $n \in \N$ such that 
$$\max_{a \in G} \big( \|[e_n,a]\| + \|[f_n,a]\|\big)< \delta, \qquad \|e_n\|_1 < \delta \qquad \max_{1 \le k \le n} \|1-f^k\|_1 \le 1-\rho.$$ 
Then $X \in \omega$. The conclusion of (ii) then states that, for each $n \in X$, there is $s_n \in A$ such that 
$$\max_{1 \le j \le m} \|[s_n,a_j]\| < \ep, \qquad \|(1-f_n)s_n\| < \ep, \qquad \|s_n^*s_n - e_n\| < \ep.$$ Put $s_n=0$ if $n \notin X$. Then $\bar{s} = (s_1,s_2,s_3, \dots)$ satisfies $f^{(k)}_\omega(\bar{s}) \le \ep$ for $1 \le k \le m+2$.

(iii) $\Rightarrow$ (i). Suppose that (i) does not hold. Then there exist central sequences $(e_n)_{n \ge 1}$ and $(f_n)_{n \ge 1}$ of positive contractions in $A$ satisfying \eqref{eq:SI1}, but such that there is no sequence $(s_n)_{n \ge 1}$  in $A$ which satisfies \eqref{eq:SI2}. This means that there exists $\delta > 0$, a finite subset $F \subset A$, and a sub-sequence $(n_\ell)_{\ell \ge 1}$, such that 
\begin{equation} \label{eq:obstruct1}
\max\big\{\|s^*s-e_{n_\ell}\|, \; \|(1-f_{n_\ell})s\|, \, \max_{a \in F} \|[s,a]\| \big\} \ge \delta,
\end{equation}
for all $\ell \ge 1$ and all $s \in A$. Put
$$e = \pi_\omega(e_{n_1},e_{n_2}, e_{n_3}, \dots), \qquad f = \pi_\omega(f_{n_1},f_{n_2}, f_{n_3}, \dots).$$
Then $e,f \in F(A)$, $e \in J_A$ and 
$$\sup_k \|1-f^k\|_{1,\omega} = \sup_k \, \lim_{\ell \to \omega} \sup_{\tau \in T(A)} \tau(1-f_{n_\ell}^k) < 1.$$
However, the existence of  $s =\pi_\omega(s_1,s_2, \dots) \in F(A)$, satisfying $s^*s = e$ and $fs = s$, would imply that the set of $\ell \in \N$ for which 
$$\|s_\ell^* s_\ell - e_{n_\ell}\| < \delta, \qquad \|(1-f_{n_\ell})s_\ell\|< \delta , \qquad \max_{a \in F} \|[s_\ell,a]\| < \delta,$$
belongs to $\omega$, and hence is non-empty.  This contradicts \eqref{eq:obstruct1}, so (iii) cannot hold. 
\end{proof}

\noindent
We also remind the reader of the original definition of excision  in small central sequences from 
\cite[Definition 2.1]{Matui.Sato}.

\begin{definition}[Matui--Sato]
\label{def:Matui.Sato.excision}
Let $A$ be a separable \Cs{} with $T(A) \ne \emptyset$. A completely positive map 
$\varphi\colon  A \to  A$ can be
\emph{excised in small central sequences} if, for any central sequences 
$(e_n)_{n \ge 1}$ and $(f_n)_{n \ge 1}$ of positive contractions in $A$ satisfying
\begin{equation} \label{eq:excision1}
\lim_{n\to\infty} \max_{\tau\in T(A)} \tau(e_n) = 0, \qquad 
\lim_{m\to \infty} \liminf_{n\to \infty}  \min_{\tau\in T(A)} \tau(f_n^m) > 0,
\end{equation}
there exists a sequence $(s_n)_{n \ge 1}$ in $A$ such that
\begin{equation} \label{eq:excision2}
\lim_{n\to \infty} \| f_ns_n - s_n\|=0, \qquad \lim_{n\to \infty} \| s_n^* a s_n - \varphi(a)e_n\|=0,
\end{equation} 
for all $a \in A$.
\end{definition}

\noindent Note that the sequence $(s_n)_{n \ge 1}$ in Definition \ref{def:Matui.Sato.excision} automatically satisfies $\|s_n\| \to 1$ if $A$ is unital. Hence we can assume, without loss of generality, that all $s_n$ in  Definition \ref{def:Matui.Sato.excision} are contractions. The same is true when $A$ is non-unital: Take an increasing approximate unit $(a_n)_{n \ge 1}$ for $A$, consisting of positive contractions, and replace the sequence $(s_n)_{n \ge 1}$ with $(a_n^{1/2}s_n)_{n \ge 1}$ (possibly after passing to suitable subsequences of both $(a_n)$ and $(s_n)$). Then, again, we get $\|s_n\| \to 1$, and we can resize and assume that $\|s_n\| = 1$ for all $n$. 

In a similar way we see that the element $s \in A_\omega$ from Definition \ref{def:omega.related.excision}  can be chosen to be a contraction. (If $A$ is unital, then $s$ is automatically an isometry.)


\begin{lemma} \label{lem:excision-eq}
The following conditions are equivalent for any unital separable \Cs{} $A$ with $T(A) \ne \emptyset$, and any completely positive map $\varphi \colon A \to A$:
\begin{itemize}
\item[(i)] $\varphi$ can be excised in small central sequences in the sense of Definition \ref{def:Matui.Sato.excision}.
\item[(ii)] For all finite subsets $F \subset A$, for all $\ep > 0$, and for all $0 < \rho < 1$, there exist $\delta > 0$, a finite subset $G \subset A$, and $N \in \N$, such that for all positive contractions $e,f \in A$ with
$$
\max_{a \in G} \big(\|[e,a]\| + \| [f,a]\|\big) < \delta, 
\qquad \|e\|_1 <  \delta, \qquad \sup_{1 \le k \le N} \|1-f^k\|_1 \le 1- \rho,
$$
there exists  $s \in A$ such that
$$\|(1-f)s\| < \ep, \qquad \max_{a \in F} \|s^*as - \varphi(a)e_n\| < \ep.$$
\item[(iii)] $\varphi$ can be excised in small central sequences in the sense of Definition \ref{def:omega.related.excision} for every/some free ultrafilter $\omega$.
\end{itemize}
\end{lemma}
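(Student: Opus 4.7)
The plan is to mimic the proof of Lemma~\ref{lem:SI-eq} almost verbatim, replacing the single approximation $\|s^*s-e\|<\ep$ by the family $\max_{a\in F}\|s^*as-\varphi(a)e\|<\ep$ indexed by a finite subset $F\subset A$. Because $a\mapsto s^*as$ is norm-continuous, it suffices to enforce the exact identity $s^*as=\varphi(a)e$ on a countable dense sequence $(a_k)_{k\ge 1}$ in $A$; this makes the $\ep$-test (Lemma~\ref{lem:omega.selection}) applicable to a countable list of target conditions. I would prove the three implications in the same cyclic order, with (ii)$\Rightarrow$(iii) as the only non-formal step.

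For (i)$\Rightarrow$(ii) and (iii)$\Rightarrow$(i), I would argue by contraposition. For (i)$\Rightarrow$(ii), fix an increasing chain of finite subsets $G_n\subset A$ with dense union. A failure of (ii) produces, for some fixed $F\subset A$, $\ep>0$, and $\rho\in(0,1)$, positive contractions $e_n,f_n\in A$ for each $n$ with $\max_{a\in G_n}(\|[e_n,a]\|+\|[f_n,a]\|)<1/n$, $\|e_n\|_1<1/n$, and $\max_{1\le k\le n}\|1-f_n^k\|_1\le 1-\rho$, yet no $s\in A$ meets $\|(1-f_n)s\|<\ep$ and $\max_{a\in F}\|s^*as-\varphi(a)e_n\|<\ep$. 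By \eqref{eq.trace.estimates}, the sequences $(e_n),(f_n)$ are asymptotically central and satisfy \eqref{eq:excision1}, so (i) would produce a sequence $(s_n)$ satisfying \eqref{eq:excision2} for every $a\in A$, contradicting the obstruction for $n$ large. The implication (iii)$\Rightarrow$(i) is dual: a failure of (i) yields $\delta>0$, a finite $F\subset A$, and a subsequence $(n_\ell)$ such that every $s\in A$ violates one of $\|(1-f_{n_\ell})s\|<\delta$ or $\max_{a\in F}\|s^*as-\varphi(a)e_{n_\ell}\|<\delta$. Setting $e=\pi_\omega(e_{n_\ell})$, $f=\pi_\omega(f_{n_\ell})$, asymptotic centrality gives $e,f\in F(A)$, and the decreasing-in-$k$ bound $\lim_{\ell\to\omega}\min_\tau\tau(f_{n_\ell}^k)\ge\liminf_n\min_\tau\tau(f_n^k)>0$ together with \eqref{eq.trace.estimates} gives $e\in J_A$ and $\sup_k\|1-f^k\|_{1,\omega}<1$; then (iii) supplies an $s=\pi_\omega(s_1,s_2,\ldots)\in A_\omega$ with $fs=s$ and $s^*as=\varphi(a)e$ for all $a$, whose components must meet both bounds on an $\omega$-large set, a contradiction.

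For the middle implication (ii)$\Rightarrow$(iii), fix a free ultrafilter $\omega$ and positive contractions $e,f\in F(A)$ with $e\in J_A$ and $\rho_0:=1-\sup_k\|1-f^k\|_{1,\omega}>0$. Choose positive contractive lifts $(e_n),(f_n)\in\ell^\infty(A)$, a dense sequence $(a_k)_{k\ge 1}$ in $A$, and let $X_n$ be the set of contractions in $A$. Apply the $\ep$-test with
\begin{align*}
f_n^{(1)}(x) &= \|(1-f_n)x\|, \\
f_n^{(k+1)}(x) &= \|x^*a_kx-\varphi(a_k)e_n\|, \qquad k\ge 1.
\end{align*}
Given $m\in\N$ and $\ep>0$, invoke (ii) with $F=\{a_1,\ldots,a_m\}$, the prescribed $\ep$, and $\rho=\rho_0/2$ to obtain parameters $\delta,G,N$; the set of $n$ for which $(e_n,f_n)$ satisfies the hypothesis of (ii) lies in $\omega$, so on this set (ii) yields $s_n\in A$ realizing $f_n^{(k)}(s_n)<\ep$ for $1\le k\le m+1$ (set $s_n=0$ elsewhere). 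Lemma~\ref{lem:omega.selection} then delivers $\bar s=(s_n)\in\ell^\infty(A)$ with $f_\omega^{(k)}(\bar s)=0$ for every $k$, and $s=\pi_\omega(\bar s)$ satisfies $fs=s$ and $s^*a_ks=\varphi(a_k)e$ for every $k$, hence $s^*as=\varphi(a)e$ for every $a\in A$ by continuity. No commutation conditions are required since Definition~\ref{def:omega.related.excision} asks only for $s\in A_\omega$, not $s\in F(A)$; the boundedness needed to obtain $\bar s\in\ell^\infty(A)$ follows from $s_n^*s_n\approx\varphi(1)e_n$ together with the remark after Definition~\ref{def:Matui.Sato.excision}. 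I do not expect any substantive obstacle: the template from Lemma~\ref{lem:SI-eq} carries over once one lists the countably many excision relations through the dense sequence $(a_k)$.
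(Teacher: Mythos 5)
Your proposal is correct and is essentially the paper's own argument: the paper omits the proof of Lemma~\ref{lem:excision-eq} precisely because it is ``almost identical'' to that of Lemma~\ref{lem:SI-eq}, which is exactly the adaptation you carry out (contrapositive arguments for (i)$\Rightarrow$(ii) and (iii)$\Rightarrow$(i), and the $\ep$-test of Lemma~\ref{lem:omega.selection} over a countable dense sequence $(a_k)$ for (ii)$\Rightarrow$(iii), with the seminorms $\|\cdot\|_{1,\omega}$ and $\|\cdot\|_{2,\omega}$ interchangeable by \eqref{eq:seminorms}). The only point to make explicit is the uniform bound defining the sets $X_n$ in the $\ep$-test, which you rightly obtain by including $1$ (or an element near it) among the tested elements so that $\|s_n\|^2\lesssim\|\varphi(1)\|+\ep$.
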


\noindent The proof of Lemma \ref{lem:excision-eq} is almost identical to the one of Lemma \ref{lem:SI-eq}, so we omit it.

Part (ii) of the lemma below shows that \cite[Lemma 2.5]{Matui.Sato} holds under the weaker assumption that $A$ has the local weak comparison property, cf.\ Definition \ref{def:weak.comparison}, (rather than strict comparison). Part (i) of the lemma is a translation of \cite[Lemma 2.5]{Matui.Sato} into central sequence algebra language.

The proof of the lemma uses the following fact: Suppose that $\psi \colon A \to B$ is an epimorphism between \Cs s $A$ and $B$, and that $e,f \in B$ are positive contractions such that $ef=f$. Then there exists positive contractions $e',f' \in A$ such that $\psi(e') = e$, $\psi(f') = f$, and $e'f'=f'$. This can be seen by using the well-known fact that pairwise orthogonal positive elements lift to pairwise orthogonal positive elements (with the same norm) applied to $f'$ and $1-e'$ in (the unitization of) $B$.

\begin{lemma}\label{lem:from.weak.comparison.to.estimate}
Suppose that $A$ is a unital, separable \Cs{} with $T(A) \ne \emptyset$. 
Let  $e, f\in A_\omega$ be positive contractions with 
$e \in J_A$,
$\sup_k \| 1-f^k \|_{1,\omega} <1$, and $f\in F(A)$.
\begin{itemize}
\item[(i)] There are
$e_0, f_0\in F(A)$ with  $e_0 \in J_A$, 
$$ e_0e=e, \qquad  f_0f=f_0, \qquad \|1-f_0\|_{1,\omega} = 
\sup_k  \| 1-f_0^k \|_{1,\omega}= \sup_k \| 1-f^k \|_{1,\omega}.
$$
\item[(ii)]
If $A$, in addition, is simple and has the local weak comparison property, and $QT(A)=T(A)$, then
for every non-zero positive element $a$ in $A$ and for every $\ep > 0$ there exists
$t \in A_\omega$ with 
$$ t^*a t=e,  \qquad ft =t, \qquad \|t\| \le \|a\|^{1/2} + \ep.$$
\end{itemize}
\end{lemma}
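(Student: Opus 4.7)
For part~(i), my plan is to handle the two elements separately. For $e_0$, I invoke the $\sigma$-ideal property of $J_A$ in $A_\omega$ (Proposition~\ref{rem:local.units.in.JA}) applied to the separable sub-\Cs{} $C \subseteq A_\omega$ generated by $A$, $e$, and $f$: this furnishes a positive contraction $e_0 \in C' \cap J_A$ with $e_0 c = c$ for all $c \in C \cap J_A$, hence in particular $e_0 \in F(A) \cap J_A$ and $e_0 e = e$. For $f_0$, I use the $\varepsilon$-test (Lemma~\ref{lem:omega.selection}) with trial sequences $(f_n^N)_n$, where $(f_n)$ represents $f$ and $N \to \infty$. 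The candidates satisfy $\|f_n^N(1-f_n)\| \le 1/(eN)$ (which will enforce $f_0 f = f_0$), $[f_n^N, a] \to 0$ along $\omega$ for $a \in A$ (giving $f_0 \in F(A)$), and $\sup_\tau \tau(1 - f_n^{Nj}) \to \|1 - f^{Nj}\|_{1,\omega} \le c$, where $c := \sup_k \|1-f^k\|_{1,\omega}$. The $\varepsilon$-test then produces $f_0 \in F(A)$ with $f_0 f = f_0$ and $\|1-f_0^j\|_{1,\omega} \le c$ for every $j$. The reverse inequality comes from $f_0 f = f_0$ together with self-adjointness: one obtains $f f_0 = f_0$, so $f$ and $f_0$ commute, and then for commuting positive contractions $f^k f_0 \le f^k$; combined with $f^k f_0 = f_0$, this yields $f_0 \le f^k$ and hence $\|1 - f_0\|_{1,\omega} \ge \|1-f^k\|_{1,\omega}$ for every $k$. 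Taking the supremum delivers the equality.

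For part~(ii), the plan is to derive a Cuntz subequivalence from local weak comparison and then convert it into the required equation while incorporating $f$. Chebyshev's inequality gives $\sup_\tau d_\tau((e_n-\delta)_+) \le \sup_\tau \tau(e_n)/\delta$, which tends to $0$ along $\omega$ because $e \in J_A$. Simplicity of $A$ together with $a \ne 0$ yields a partition-of-unity identity $1 = \sum x_i^* a x_i$ in $A$, forcing $\inf_\tau \tau(a) > 0$ and hence $\inf_\tau d_\tau(a) > 0$. The local weak comparison property therefore gives $(e_n - \delta)_+ \precsim a$ in $A$ for $n$ sufficiently close to $\omega$. A sharp R\o{}rdam-type argument converts this into the existence of $s_n \in A$ with $s_n^* a s_n = (e_n - 2\delta)_+$ and $\|s_n\|$ tightly controlled in terms of $\|e_n\|$ and $\|a\|$. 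Using part~(i) to obtain $f_0 \in F(A)$ with $f_0 f = f_0$, I build $t_n$ from $s_n$ by multiplying by high powers $f_n^{N_n}$ of the central sequence (with $N_n \to \infty$ suitably), so that $\|f_n t_n - t_n\| \to 0$; the asymptotic commutation $[f_n, a] \to 0$ for $a \in A$ lets us control $t_n^* a t_n$. Finally, the $\varepsilon$-test (together with the lifting fact recalled just above the lemma, used to preserve the relation $f t = t$ under the passage to $A_\omega$) upgrades the three asymptotic conditions --- $t_n^* a t_n \to e_n$, $f_n t_n - t_n \to 0$, and the norm bound --- to exact identities in $A_\omega$.

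The most delicate point, and the main obstacle, is to secure the precise norm bound $\|t\| \le \|a\|^{1/2} + \varepsilon$ while keeping $t^* a t = e$ exactly. This forces R\o{}rdam's lemma to be used in a form giving close-to-optimal norm control on $s_n$, and the insertion of the contractive factors $f_n^{N_n}$ must neither destroy that bound nor obstruct the asymptotic commutation needed to identify $t_n^* a t_n$ with $e_n$. Simultaneously achieving the two exact equations $t^* a t = e$ and $f t = t$ together with the sharp norm estimate is exactly what makes the $\varepsilon$-test --- applied to the full list of error functions encoding commutators, the difference $\|f_n t_n - t_n\|$, the approximation $\|t_n^* a t_n - e_n\|$, and the norm excess $(\|t_n\| - \|a\|^{1/2} - \varepsilon)_+$ --- indispensable.
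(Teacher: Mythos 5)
Part (i) of your proposal is essentially the paper's argument: $e_0$ from the $\sigma$-ideal property of $J_A$, $f_0$ from the $\ep$-test applied to the powers $f_n^\ell$, and the reverse trace inequality from $f^k f_0 = f_0 \Rightarrow f_0 \le f^k$. That part is fine.

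Part (ii), however, has two genuine gaps, and they are exactly the points where the paper's proof is structured differently. First, the "sharp R{\o}rdam-type argument" you invoke does not exist in the form you need: from $(e_n-\delta)_+ \precsim a$ one gets $s_n$ with $s_n^* a s_n = (e_n-2\delta)_+$, but the witnessing element carries no norm bound in terms of $\|e_n\|$ and $\|a\|$ alone (only $\|a^{1/2}s_n\|$ is controlled; if the comparison lands on a part of the spectrum of $a$ where $a$ is small, $\|s_n\|$ must be large), and the bound degenerates as $\delta\to 0$, which you need for exactness. The paper obtains the norm bound by a different mechanism: it first produces $g,h\in C^*(1,a)$ with $ahg=g$, $\|g\|=1$, $\|h\|\le 1+\ep$, compares $e_{0,n}$ into $(f_{0,n}^{1/2}gf_{0,n}^{1/2}-\delta)_+$ with $\delta>0$ \emph{fixed}, gets $v_n^* f_{0,n}^{1/2}gf_{0,n}^{1/2}v_n = e_n$ exactly (using $e_{0,n}e_n=e_n$), and then exploits the identity $\|g^{1/2}f_{0,n}^{1/2}v_n\|^2=\|e_n\|\le 1$ so that $t_n=h^{1/2}g^{1/2}f_{0,n}^{1/2}v_n$ is automatically a $(1+\ep)^{1/2}$-bounded element with $t_n^*at_n=e_n$ on the nose. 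Second, your scheme of securing $ft=t$ \emph{a posteriori} by multiplying $s_n$ by $f_n^{N_n}$ fails: $f$ is only large in trace ($\sup_k\|1-f^k\|_{1,\omega}\le\rho<1$), not close to $1$ in norm, and $[f_n,s_n]$ is not small since $s_n$ is not a fixed element of $A$; so $t_n^*at_n \approx s_n^*as_n - s_n^*a^{1/2}(1-f_n^{2N_n})a^{1/2}s_n$ has a deficit that is not negligible even tracially, and the relation $t^*at=e$ is destroyed. This is why the paper builds $f_0$ into the comparison \emph{target} $f_{0,n}^{1/2}gf_{0,n}^{1/2}$ from the start, and why the hypothesis $\sup_k\|1-f^k\|_{1,\omega}<1$ (which your part (ii) never really uses) is needed: via the Matui--Sato Lemma 2.4-type estimate one gets a constant $\alpha>0$ with $\inf_\tau d_\tau\big((f_{0,n}^{1/2}gf_{0,n}^{1/2}-\delta)_+\big)$ bounded below by roughly $\alpha(1-\rho)-\delta>0$, which is what makes the local weak comparison applicable to a target already sitting under $f$.
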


\begin{proof} (i). The existence of $e_0 \in F(A)$ with $e_0 \in J_A$ and $e_0e=e$ follows immediately from the fact that $J_A$ is a $\sigma$-ideal, cf.\ Proposition \ref{rem:local.units.in.JA} and Definition \ref{def:sigma.ideals.etc}. (Note that we do not need to assume that $e \in F(A)$ to obtain this.)

We use Lemma \ref{lem:omega.selection} to prove the existence of the element $f_0$. Lift $f$ to a positive contraction $(f_1,f_2,f_3, \dots)$ in $\ell^\infty(A)$. Put $\rho = \sup_k \|1-f^k\|_{1,\omega}$, and let
 $(a_k)_{k \ge 1}$ be a dense sequence in $A$. Let each of the sets $X_n$ of 
Lemma \ref{lem:omega.selection} be the set of all positive contractions in $A$, and define the functions $g_n^{(k)} \colon X_n \to [0,\infty)$ by
$$\vspace{.1cm} g_n^{(1)}(x) = \|x(1-f_n)\|, \quad g_n^{(2k)}(x) = \max\{\|1-x^k \|_{1,\omega}-\rho,0\}, \quad 
g_n^{(2k+1)}(x)  = \|xa_k-a_kx\|,\vspace{.1cm}
$$
for $k \ge 1$ and $x \in X_n$. Fix $\ell \in \N$, and put $s_n = (f_n)^\ell \in X_n$. Then 
\begin{eqnarray*}
g_\omega^{(1)}\big(s_1,s_2,s_3, \dots \big) &= &\|f^\ell(1-f)\| \;\; \overset{\ell \to \infty}{\longrightarrow} \; \; 0,\\
g_\omega^{(2k)}\big(s_1,s_2,s_3, \dots\big)  &= &\max\big\{\|1-f^{\ell \cdot k}\|_{1,\omega}-\rho,0\big\} 
\; =\; 0,\\
g_\omega^{(2k+1)}\big(s_1,s_2,s_3, \dots \big)  &= & \|f^\ell a_k - a_k f^\ell \| \; =\; 0.
\end{eqnarray*}
 Lemma~\ref{lem:omega.selection}  now gives the existence of a sequence $(f_{0,n})_{n\ge1}$ of positive contractions in $A$ such that $g_\omega^{(k)}(f_{0,1},f_{0,2}, f_{0,3}, \dots) = 0$ for all $k$. The positive contraction $f_0 = \pi_\omega(f_{0,1},f_{0,2}, f_{0,3}, \dots) \in A_\omega$ then has the following properties: $f_0f=f_0$, $\|1-f_0^\ell \|_{1,\omega} \le \sup_k \|1-f^k\|_{1,\omega}$ for all $\ell \ge 1$, and $f_0 \in F(A)$ (i.e., $f_0$ commutes with all elements of $A$). From the first identity we conclude that $f^kf_0 = f_0$ for all $k$, whence $1-f^k \le 1-f_0$ for all $k$, which again implies that $\sup_k \|1-f^k\|_{1,\omega} \le \|1-f_0\|_{1,\omega}$.

(ii). We can without loss of generality assume that $\|a\| = 1$. By the continuous function calculus we can find positive elements $g,h \in C^*(1,a)$ such that $\|g\| = 1$, $\|h\| \le 1 + \ep$, and $ahg=g$. 

Let $e_0$ and $f_0$ be as in part (i), and let $(e_{0,n})_{n \ge 1}, (f_{0,n})_{n \ge 1} \in \ell^\infty(A)$ be positive contractive lifts of $e_0$ and $f_0$, respectively. As remarked above the lemma we can choose the lift of $e_0$ such that $e_{0,n}e_n =e_n$ for all $n$. Put
$$\eta := \lim_{n \to \omega} \,  \inf_{\tau \in T(A)} \tau(f_{0,n}) = 1-\|1-f_0\|_{1,\omega} > 0.$$
By the proof of \cite[Lemma 2.4]{Matui.Sato} there is a constant $\alpha > 0$ (that only depends on $g$) such that
$$\lim_{n \to \omega} \tau(f_{0,n}^{1/2}gf_{0,n}^{1/2}) \ge \alpha \cdot \lim_{n\to\omega} \tau(f_{0,n})$$
for all $\tau \in T(A)$. Put $\delta = \alpha \eta/2 > 0$ and put $b_n = (f_{0,n}^{1/2}gf_{0,n}^{1/2} - \delta)_+$. As $\tau(b) \le d_\tau(b)$ for all positive contractions $b \in A$ and all $\tau \in T(A)$, it follows that
\begin{eqnarray*}
\lim_{n \to \omega} \,  \inf_{\tau \in T(A)} d_\tau(b_n)  & \ge & \lim_{n\to \omega} \, \inf_{\tau \in T(A)} \tau(b_n) \; \ge \; 
\lim_{n\to \omega} \, \inf_{\tau \in T(A)} \tau(f_{0,n}^{1/2}gf_{0,n}^{1/2})  - \delta\\
&\ge& \alpha \cdot \lim_{n \to\omega} \, \inf_{\tau \in T(A)} \tau(f_{0,n}) - \delta \; = \; \alpha  \eta - \delta \; > \; 0.
\end{eqnarray*}

We claim that
\begin{equation} \label{eq:e_0}
\lim_{n \to \omega} \, \sup_{\tau \in T(A)} d_\tau(e_{0,n}) = 0.
\end{equation}
To see this, apply the  result about the existence of $e_0$ in part (i) once again to  find a positive contraction $e' \in F(A) \cap J_A$ such that $e'e_0 = e_0$. Let $(e'_n)_{n \ge 1}$  in $\ell^\infty(A)$ be a positive contractive lift of $e'$ such that $e'_ne_{0,n} = e_{0,n}$  for all $n$. Then $d_\tau(e_{0,n}) \le \tau(e'_n)$ for all $n$ and all $\tau \in T(A)$, and $\lim_{n \to \omega} \sup_{\tau \in T(A)} \tau(e'_n) = \|e'\|_{1,\omega} =0$. This shows that \eqref{eq:e_0} holds. 

Let $\gamma = \gamma(A)$ be the constant witnessing that $A$ has local weak comparison. The set $X$ consisting of all $n \in \N$ such that
$$\gamma \cdot \sup_{\tau \in T(A)} d_\tau(e_{0,n}) < \inf_{\tau \in T(A)} d_\tau(b_n)$$
belongs to $\omega$. By definition of $\gamma$ this entails that $e_{0,n} \precsim b_n = (f_{0,n}^{1/2}gf_{0,n}^{1/2}-\delta)_+$ for all $n \in X$. (Here we use the assumption that $T(A) = QT(A)$.)  As $e_{0,n}e_n = e_n$, this implies we can find $v_n \in A$ such that $v_n^*f_{0,n}^{1/2}gf_{0,n}^{1/2}v_n = e_n$ and $\|v_n\| \le \delta^{-1/2}$ for all $n \in X$. Observe that $\|g^{1/2}f_{0,n}^{1/2}v_n\|^2 = \|e_n\| \le 1$. Put  $t_n = h^{1/2}g^{1/2}f_{0,n}^{1/2}v_n$. Then 
$$\|t_n\|^2 \le 1+\ep, \quad t_n^*at_n = e_n, \quad \|(1-f_n)t_n\|  \le \big\|\big[f_n,h^{1/2}g^{1/2}\big]\big\| \, \|f_{0,n}v_n\|.
$$
Hence $\lim_\omega \|(1-f_n)t_n\|  =0$, so the element $t = \pi_\omega(t_1,t_2,t_3, \dots) \in A_\omega$ has the desired properties. 
\end{proof}

\noindent
We start now the proof of Proposition \ref{prop:from.comparison.to.excision}:
\begin{definition}\label{def:one-step-elementary}
Let 
$A \subseteq B$ be \Cs s. A completely positive map  
$\varphi\colon A\to B$  is said to be \emph{one-step-elementary} if there exist a 
pure state $\lambda$ on $B$ and 
$d_1, \dots,d_n; c_1,\dots,c_n \in B$ such that 
$$\varphi (a) = \sum _{j,k=1}^n \lambda (d_j^*\,a\,d_k)c_j^*c_k\,.$$
\end{definition}

\noindent
An inspection of  the proof of \cite[Proposition 2.2]{Matui.Sato}, using our  Lemma \ref{lem:from.weak.comparison.to.estimate} instead of \cite[Lemma 2.5]{Matui.Sato}, gives a proof of the following: 

\begin{lemma}[cf.\ Proposition 2.2 of \cite{Matui.Sato}]
\label{lm:one-step-excision}
If $A$ is a unital, simple, and separable \Cs{} with the local weak comparison property and with $QT(A)=T(A) \ne \emptyset$,
then every  one-step-elementary  completely positive map  
$\varphi\colon A\to A$
can be excised in small central sequences.
\end{lemma}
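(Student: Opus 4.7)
The plan is to adapt the argument of \cite[Proposition~2.2]{Matui.Sato} by replacing their Lemma~2.5 with our Lemma~\ref{lem:from.weak.comparison.to.estimate}(ii), so that their stronger strict comparison hypothesis is weakened to local weak comparison. Since $J_A$ is a $\sigma$-ideal in $A_\omega$ (Proposition~\ref{rem:local.units.in.JA}), I may assume that $e \in F(A) \cap J_A$. The pure state $\lambda$ appearing in the definition of the one-step-elementary map $\varphi$ can, by the Akemann--Anderson--Pedersen excision theorem, be excised by a sequence of positive contractions $h_m \in A$ with $\|h_m\|=1$ and
\[
\|h_m\, x\, h_m - \lambda(x)\, h_m^2 \| \longrightarrow 0 \qquad (m \to \infty)
\]
for every $x$ in a countable dense subset of $A$; I arrange that this set contains all products $d_j^* a_i d_k$ with $1 \le j,k \le n$ and $(a_i)_{i\ge 1}$ a dense sequence in $A$.

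Next, for each $m$, Lemma~\ref{lem:from.weak.comparison.to.estimate}(ii) applied to the non-zero positive element $h_m^2 \in A$ (and to the given $e,f$) produces $t_m \in A_\omega$ with $t_m^* h_m^2 t_m = e$, $f t_m = t_m$, and $\|t_m\| \le 1 + 1/m$. The candidate witness of excision is then
\[
s_m \; := \; \sum_{k=1}^n d_k\, h_m\, t_m\, c_k \; \in \; A_\omega.
\]
Because $f \in F(A)$ commutes with $d_k, h_m \in A$ in $A_\omega$ and satisfies $ft_m = t_m$, I expect to verify the \emph{exact} identity $f s_m = s_m$ by pushing $f$ past each $d_k$ and $h_m$ until it reaches $t_m$ and is absorbed. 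Expanding $s_m^* a s_m$, inserting the excision approximation $h_m d_j^* a d_k h_m = \lambda(d_j^* a d_k) h_m^2 + \eta_{m,j,k,a}$, using $t_m^* h_m^2 t_m = e$, and invoking $[e, c_k]=0$ (since $e \in F(A)$), I arrive at
\[
s_m^* a\, s_m \; = \; \varphi(a)\, e \; + \; \xi_{m,a}, \qquad \|\xi_{m,a}\| \longrightarrow 0 \;\; (m\to\infty),
\]
for every $a$ in the chosen dense sequence.

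To turn this $m$-indexed approximation into the exact equalities required by Definition~\ref{def:omega.related.excision}, the final step is to invoke the $\ep$-test (Lemma~\ref{lem:omega.selection}): with $X_n$ a uniformly norm-bounded set of elements of $A$ and test functions $f_n^{(0)}(x) = \|(1-f_n) x\|$, $f_n^{(i)}(x) = \|x^* a_i x - \varphi(a_i) e_n\|$ for $i\ge 1$, I lift each $s_m \in A_\omega$ to a bounded sequence in $\ell^\infty(A)$ and, for any prescribed $M$ and $\ep$, take $m$ large enough that this lift verifies $f_\omega^{(j)} < \ep$ for $j = 0, 1, \dots, M$; Lemma~\ref{lem:omega.selection} then yields a single sequence whose class $s \in A_\omega$ satisfies $fs = s$ and $s^* a_i s = \varphi(a_i) e$ for every $i$, which propagates by continuity to every $a \in A$. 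The main obstacle is reconciling the two regimes: pure state excision lives in $A$ and is only approximate, while Lemma~\ref{lem:from.weak.comparison.to.estimate}(ii) requires its comparison target to be a single element of $A$ and produces exact identities in $A_\omega$. The arrangement $s_m = \sum_k d_k h_m t_m c_k$ is the pivot that resolves this: keeping $h_m$ in $A$ (rather than passing to an $A_\omega$-level excising element) preserves $[f,h_m]=0$ and makes $f s_m = s_m$ hold exactly, leaving the excision error as the only obstruction to exactness of $s_m^* a s_m - \varphi(a) e$, which the $\ep$-test then removes via a diagonal in $A_\omega$.
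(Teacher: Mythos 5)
Your proposal is correct and follows essentially the same route as the paper, which proves this lemma precisely by running the argument of \cite[Proposition 2.2]{Matui.Sato} with their Lemma 2.5 replaced by Lemma~\ref{lem:from.weak.comparison.to.estimate}\,(ii); your write-up simply carries out that inspection explicitly (excising $\lambda$ by Akemann--Anderson--Pedersen, forming $s_m=\sum_k d_k h_m t_m c_k$, using $e,f\in F(A)$ to commute past $c_k$, $d_k$, $h_m$, and finishing with the $\ep$-test of Lemma~\ref{lem:omega.selection}). The details check out, including the exact identity $fs_m=s_m$ and the passage from the $m$-indexed approximations to the exact equalities required by Definition~\ref{def:omega.related.excision}.
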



\begin{lemma}\label{lem:excision.appox.invariant}
If $A$ is a  separable \Cs, then the family of
all  completely positive maps  
$\varphi\colon A\to A$,
that can be excised in small central sequences,
is closed under point-norm limits.
\end{lemma}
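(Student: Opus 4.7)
The plan is to prove the lemma by a diagonal extraction based on the $\varepsilon$-test (Lemma~\ref{lem:omega.selection}). Suppose $(\varphi_n)_{n\ge 1}$ is a sequence of completely positive maps $A\to A$, each excisable in small central sequences, converging point-norm to $\varphi$. Fix positive contractions $e,f\in F(A)$ with $e\in J_A$ and $\sup_k\|1-f^k\|_{2,\omega}<1$, and choose contractive lifts $(e_m),(f_m)\in\ell^\infty(A)$ representing $e$ and $f$. For each $n$, the excision property for $\varphi_n$ (together with the contractivity remark following Definition~\ref{def:Matui.Sato.excision}) yields a contraction $s_n\in A_\omega$ with $fs_n=s_n$ and $s_n^*as_n=\varphi_n(a)e$ for all $a\in A$. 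Fix a contractive lift $(s_{n,m})_m\in\ell^\infty(A)$ of each $s_n$, and fix a dense sequence $(a_k)_{k\ge 1}$ in $A$.

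The goal is to build a single contraction $s\in A_\omega$ satisfying $fs=s$ and $s^*as=\varphi(a)e$ for all $a$, and by a density argument it suffices to replace the last condition by the countable list $s^*a_ks=\varphi(a_k)e$ for $k\ge 1$. This matches the setup of the $\varepsilon$-test. Take each $X_m$ to be the unit ball of $A$ and define
\[
g_m^{(0)}(x)=\|(1-f_m)x\|,\qquad g_m^{(k)}(x)=\|x^*a_kx-\varphi(a_k)e_m\|,\quad k\ge 1.
\]
To verify the hypothesis of Lemma~\ref{lem:omega.selection}, fix $K\in\N$ and $\varepsilon>0$, choose $N$ large enough that $\|\varphi_N(a_k)-\varphi(a_k)\|<\varepsilon$ for $1\le k\le K$, and take the test sequence to be $(s_{N,m})_m$. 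Since $\pi_\omega(s_{N,m})=s_N$, the $\omega$-limits compute to $g_\omega^{(0)}=\|(1-f)s_N\|=0$ and, for $1\le k\le K$,
\[
g_\omega^{(k)}=\|s_N^*a_ks_N-\varphi(a_k)e\|=\|(\varphi_N(a_k)-\varphi(a_k))e\|\le\varepsilon.
\]

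Lemma~\ref{lem:omega.selection} therefore produces a sequence $(t_m)\in\prod_m X_m$ of contractions with $g_\omega^{(k)}(t_1,t_2,\ldots)=0$ for every $k\ge 0$. Setting $s:=\pi_\omega(t_1,t_2,\ldots)\in A_\omega$, the $k=0$ vanishing gives $fs=s$, while vanishing for $k\ge 1$ gives $s^*a_ks=\varphi(a_k)e$. By density of $(a_k)$ and norm-continuity of $a\mapsto s^*as$ and $a\mapsto\varphi(a)e$, this extends to $s^*as=\varphi(a)e$ for all $a\in A$, so $\varphi$ can be excised. There is no real obstacle here beyond careful bookkeeping; the only two points requiring care are contractivity of the lifts of the $s_n$ (available because excision can be realized by a contraction) and the uniform-in-$k$ character of the point-norm approximation in the test-function estimate, both of which are handled above.
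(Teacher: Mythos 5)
Your argument is correct and is essentially the paper's own proof: the same $\varepsilon$-test (Lemma~\ref{lem:omega.selection}) with test functions $x\mapsto\|(1-f_m)x\|$ and $x\mapsto\|x^*a_kx-\varphi(a_k)e_m\|$, verified by plugging in contractive lifts of the excising elements $s_N$ for the approximating maps $\varphi_N$, so that $\|s_N^*a_ks_N-\varphi(a_k)e\|=\|(\varphi_N(a_k)-\varphi(a_k))e\|$ is small. Your explicit handling of contractivity of the $s_n$ (via the remark after Definition~\ref{def:Matui.Sato.excision}) plays the role of the paper's brief normalization discussion, and the density step at the end is routine, so there is nothing to add.
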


\begin{proof} Let $\varphi_n \colon A \to A$ be a sequence of completely positive maps, each of which can be excised in small central sequences, and which converges pointwise to a (completely positive) map $\varphi \colon A \to A$. We show that $\varphi$ can be excised in small central sequences. 

Let $e,f\in F(A)$ be given such that $e \in J_A$ and $\sup_k \|1-f^k\|_2 < 1$. For each $n \ge 1$ there exist $s_n \in A_\omega$ such that $fs_n = s_n$ and $s_n^*as_n  = \varphi_n(a)e$ for all $a \in A$. 

The Banach-Steinhaus  theorem
shows that the sequence $(\varphi_n)_{n \ge 1}$ is 
uniformly bounded. Let $(u_k)_{k \ge 1}$ be an increasing approximate unit for $A$ consisting of positive contractions (if $A$ is unital we can take $u_k = 1$ for all $k$). Upon replacing $\varphi_n$ with the map $a \mapsto \varphi_n(u_kau_k)$ for a suitably large $k=k(n)$, we can assume that $\|\varphi_n\| \to \|\varphi\|$.

Lift $e$ and $f$ to positive contractions $(e_1,e_2,\ldots)$ and $(f_1,f_2,\ldots)$, respectively, in $\ell^\infty(A)$. Let $(a_n)_{n \ge 1}$ be a dense sequence in the unit ball of $A$. We shall use
  Lemma \ref{lem:omega.selection} to finish the proof. Let each $X_n$ be the unit ball of $A$; and
consider the test functions
$$f^{(1)}_n(x) := \| (1-f_n) x\|, \qquad
f^{(k+1)}_n(x):= 
\| x^* a_k x - \varphi(a_k)e_n\|, \quad x \in X_n,  \; \; k \ge 1.
$$
Let $s'_m =(s_{m,1},s_{m,2},s_{m,3}, \dots) \in \ell^\infty(A)$ be a lift of $s_m$. Then $f_\omega^{(1)}(s'_m) = \|(1-f)s_m\| = 0$, and
$$f_\omega^{(k+1)}(s'_m) = \|s_m^*a_ks_m-\varphi(a_k)e\| = \|(\varphi_m(a_k)-\varphi(a_k))e\| \; \overset{m \to \infty}{\longrightarrow} \; 0$$
for all $k \ge 1$. The $\varepsilon$-test of Lemma \ref{lem:omega.selection} is thus fulfilled, and so there exist contractions $(t_n)_{n \ge 1}$ in $A$  such that 
$t:=\pi_\omega(t_1,t_2,\ldots)$
satisfies  $ft=t$ and $t^*at =\varphi(a)e$ for all $a\in A$.
\end{proof}

\noindent
The following useful observation (together with the lemma above) allow us to bypass  \cite[Chapter 3]{Matui.Sato}. In particular, our arguments do not depend on \cite[Lemma 3.3]{Matui.Sato}.

\begin{prop}\label{lem:approx.by.elementary}
Let $A \subseteq B$ be a separable \Cs s,  with $B$ simple and non-elementary, and let $\varphi\colon A\to B$ be a completely positive map. Then $\varphi$ is nuclear if and only if it 
is the point-norm limit of a sequence of 
completely positive one-step-elementary maps $\varphi_n\colon A\to B$.
\end{prop}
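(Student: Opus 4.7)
The ``if'' direction is straightforward: each one-step-elementary map $\varphi(a) = \sum_{j,k=1}^n \lambda(d_j^* a d_k) c_j^* c_k$ factors as $A \to M_n \to B$ via $\psi(a) = (\lambda(d_j^* a d_k))_{j,k}$ and $\eta(x) = \sum_{j,k} x_{jk}\, c_j^* c_k$, both completely positive, so $\varphi$ is nuclear. Nuclearity of completely positive maps into $B$ is preserved under point-norm convergence by the standard ``half-$\varepsilon$'' argument (approximate $\varphi_n$ by a matrix factorization and $\varphi$ by $\varphi_n$).

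For the converse, fix $\varphi$ nuclear, a finite subset $F \subset A$, and $\varepsilon > 0$. Choose completely positive maps $\psi \colon A \to M_n$ and $\eta \colon M_n \to B$ with $\|\varphi(a) - \eta \circ \psi(a)\| < \varepsilon$ for $a \in F$. Extract a square root in the \Cs{} $M_n(B)$ of the positive matrix $(\eta(e_{ij}))_{i,j}$ to produce $r_{li} \in B$ with $\eta(e_{ij}) = \sum_{l=1}^n r_{li}^* r_{lj}$, so $\eta \circ \psi(a) = \sum_{l,i,j} \psi(a)_{ij}\, r_{li}^* r_{lj}$. Next, extend $\psi$ to $\tilde{\psi} \colon B \to M_n$ by Arveson's extension theorem and dilate via Stinespring: $\tilde{\psi}(b) = T^* \pi(b) T$ for a non-degenerate separable representation $\pi \colon B \to \mathcal{B}(H)$ and an isometry $T \colon \C^n \to H$, so that $\psi(a)_{ij} = \langle T e_i, \pi(a) T e_j\rangle$ for $a \in A$.

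Now invoke Voiculescu's absorption theorem. Since $B$ is simple and non-elementary, $\pi(B) \cap \cK(H) = 0$; fix a faithful irreducible representation $\pi_0 \colon B \to \mathcal{B}(H_0)$. Voiculescu's theorem supplies the approximate unitary equivalence $\pi_0 \sim_{\mathrm{au}} \pi_0 \oplus \pi^{(n)}$, from which we extract isometries $U_1, \dots, U_n \colon H \to H_0$ with mutually orthogonal ranges such that $\|U_l \pi(b) - \pi_0(b) U_l\|$ is as small as desired on a sufficiently large finite subset of $B$ (containing $F$ and the elements built below). Pick a unit vector $\xi_0 \in H_0$; the vector state $\lambda := \omega_{\xi_0} \circ \pi_0$ is pure on $B$ because $\pi_0$ is irreducible. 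By Kadison's transitivity theorem applied to $\pi_0$, choose $d_i^{(l)} \in B$ with $\pi_0(d_i^{(l)})\xi_0$ norm-close to $U_l T e_i$. Indexing by pairs $(l,i)$ and setting $D_{(l,i)} := d_i^{(l)}$, $C_{(l,i)} := r_{li}$, the map
$$
\varphi'(a) \;:=\; \sum_{(l,i),\,(l',j)} \lambda\bigl(D_{(l,i)}^*\, a \, D_{(l',j)}\bigr)\, C_{(l,i)}^*\, C_{(l',j)}
$$
is one-step-elementary. Its diagonal part ($l = l'$) contributes $\sum_{l,i,j} \lambda((d_i^{(l)})^* a d_j^{(l)})\, r_{li}^* r_{lj} \approx \sum_{l,i,j} \psi(a)_{ij}\, r_{li}^* r_{lj} = \eta \circ \psi(a) \approx \varphi(a)$ for $a \in F$, while each off-diagonal coefficient ($l \ne l'$) satisfies $\lambda((d_i^{(l)})^* a d_j^{(l')}) \approx \langle U_l T e_i,\, U_{l'} \pi(a) T e_j\rangle = 0$ by orthogonality of $U_l(H)$ and $U_{l'}(H)$.

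The principal obstacle is the simultaneous production of the intertwining isometries $U_l$: they must be mutually orthogonal \emph{and} approximately intertwine $\pi$ with $\pi_0$ on a finite set large enough that all the cross terms of $\varphi'$ become negligible in operator norm, and the approximations must be strong enough to survive the norm estimate $\|C_{(l,i)}^* C_{(l',j)}\| \le \max_l \|R_l\|^2$. This is precisely the content of Voiculescu absorption in the simple non-elementary setting, where $\pi_0$ absorbs any $n$-fold inflation of $\pi$ up to approximate unitary equivalence, so the requisite orthogonal system of approximate intertwiners is available inside $\pi_0$ alone.
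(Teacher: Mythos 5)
Your proof is correct and follows essentially the same route as the paper: approximate the nuclear map by a factorization through $M_n$, rewrite the $M_n\to B$ leg via the square root of $(\eta(e_{ij}))\in M_n(B)$, extend by Arveson, dilate by Stinespring, and realize the resulting coefficient matrix with a pure state via Voiculescu plus Kadison transitivity. The only (harmless) variation is that you invoke the absorption form $\pi_0\sim_{\mathrm{au}}\pi_0\oplus\pi^{(n)}$ to get $n$ orthogonal approximate intertwiners, whereas the paper packages the inflation into a single map $S\colon B\to M_{n^2}$ and applies Voiculescu to its padded Stinespring dilation versus $\rho_\lambda$ --- the two are the same argument in different bookkeeping.
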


\noindent One can relax the assumptions on $B$ to the following: there exists a pure state $\lambda$ on $B$ such that the associated GNS representation $ \rho_\lambda \colon B \to \mathcal{L}(H)$ satisfies $ \rho_\lambda^{-1}(\cK(H)) = \{0\}$. 
\begin{proof} Each one-step-elementary map has finite dimensional range, and is therefore nuclear by \cite[Theorem 3.1]{ChoiEffros-78}. As the set of nuclear maps is closed under point-norm limits, we see that the "if" part of the proposition holds. 

Suppose now that $\varphi$ is nuclear. Then $\varphi$ can be approximated
in the point-norm topology  by maps of the form 
$V\circ U\colon A\to B$, where
$U\colon A\to M_n$ and $V\colon M_n\to B$
are completely positive maps, and $n \ge 1$. It thus suffices to show that $V \circ U$ can be approximated by one-step-elementary maps.
By Arveson extension theorem, the completely positive map $U$ extends
to a completely positive map $W\colon B\to M_n$.

Using a trick from the Stinespring theorem, we can decompose $V\colon M_n\to B$ 
as a superposition $V= T\circ E$, with
$$E\colon M_n\to M_n\otimes 1_n\subset M_{n^2}, \qquad E(x) = x \otimes 1_n,$$ 
and a 
completely positive map $T\colon M_{n^2}\to B$ of the form $T(y)= C^*y \, C$, for $y\in M_{n^2}$, where 
$C$ is a suitable column matrix in $M_{n^2,1}(B)$. Indeed, if $(e_{ij})_{i,j=1}^n$ are the matrix units for $M_n$, then $$P=(\mathrm{id}_n \otimes V)\Big(\sum_{i,j} e_{ij} \otimes e_{ij}\Big) \in M_{n} \otimes B$$ is positive, and hence has a positive square root $P^{1/2} = \sum_{i,j} e_{ij} \otimes c_{ij}$, with $c_{ij} \in B$. Then $V(e_{ij}) = \sum_{k=1}^n c_{ki}^* c_{kj}$ for all $i,j$. We obtain $C = (c_1, c_2, \dots, c_{n^2})^{T}$ from a suitable rearrangement of the matrix $(c_{ij})$. 

Now, $V\circ W=T\circ S$, where $S:=E\circ W \colon B \to M_{n^2}$ is completely positive; and $V \circ U = (T \circ S)|_A$. We have the following commutative diagram:
$$\xymatrix@C+2pc@R+2pc{B \ar@{-->}[drr]^-{\; \; S} \ar[dr]_W & A \ar[l]<-.3ex>_{\supseteq} \ar[r]^{V \circ \, U} \ar[d]_-{U}  & B \\
 & M_n \ar[r]_-E  \ar[ur]^V &  M_n \otimes M_n \ar[u]_T}$$

Choose a pure state $\lambda$ on $B$ such that $ \rho_\lambda^{-1}(\cK(H)) = \{0\}$. (If $B$ is simple and non-elementary, then any pure state will have this property.)

We show below that the map
$S\colon B\to M_{n^2}$ is the point-norm limit of maps $S' \colon B \to M_{n^2}$ of the form 
$$S' (b):= \big[ \lambda (d_j^*bd_k) \big]_{1\leq j,k\leq n^2}, \qquad b \in B,$$
where $d_1, d_2,\ldots, d_{n^2}$ are elements in $B$. This will finish the proof, since  $(T\circ S')|_A\colon A\to B$  is a one-step-elementary completely positive map, namely the one given by
$$(T\circ S')|_A (a)= \sum _{j,k}  \lambda(d_j^*ad_k)c_j^*c_k, \qquad a \in A,$$
and $V \circ U$ is the point-norm limit of maps of the form $(T \circ S')|_A$.

It is a consequence of Stinespring dilation theorem for the completely positive map $S \colon B \to M_{n^2}$, that there is a  representation $\rho \colon B \to \mathcal{L}(H_0)$ and vectors $\eta_1, \eta_2, \dots, \eta_{n^2} \in H_0$ such that $\big[S(b)\big]_{ij} = \langle \rho(b) \eta_j, \eta_i \rangle$. Upon replacing $\rho$ with $\rho \oplus \sigma$ for some non-degenerate representation $\sigma \colon B \to \mathcal{L}(H_1)$ with $\sigma^{-1}(\cK(H_1)) =\{0\}$, we can assume, moreover, that $\rho^{-1}(\cK(H_0)) =\{0\}$. It follows from a theorem of Voiculescu, \cite{Voiculescu.WvN.Thm}, see \cite{Arveson.Ext}, that $\rho$ and $ \rho_\lambda$ are approximately unitarily equivalent. We can therefore approximate $S$ in the point-norm topology by maps $S'\colon B\to M_{n^2}$ of the form $\big[S'(b)\big]_{ij}=\langle  \rho_\lambda(b)\xi_j,\xi_i \rangle$ for all  $b \in B$,
for some vectors $\xi_1,\xi_2, \dots, \xi_{n^2} \in  H$. Let $\xi_0 \in H$ be the canonical separating and cyclic vector representing the pure state $\lambda$. 
Kadison's transitivity theorem for irreducible representations provides us with
elements $d_1,d_2, \ldots,d_{n^2}\in B$ with
$ \rho_\lambda(d_j)\xi_0 = \xi_j$. Hence,
$$\big[S'(b)\big]_{ij}=\big\langle  \rho_\lambda(b)\xi_j,\xi_i \big\rangle = \big\langle  \rho_\lambda(bd_j)\xi_0,  \rho_\lambda(d_i)\xi_0 \big\rangle = \lambda (d_i^*bd_j),$$
for all $b \in B$, as desired.
\end{proof}

\begin{prop}\label{prop:from.comparison.to.excision}
Suppose that $A$ is  a unital, simple, and separable  \Cs{} with the local weak comparison property and with $QT(A) = T(A) \ne \emptyset$. Then every nuclear completely positive map $\varphi \colon A\to A$
can be excised in small central sequences.
\end{prop}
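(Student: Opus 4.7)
The proposition will follow by combining three previously established results: Lemma \ref{lm:one-step-excision} (one-step-elementary maps can be excised), Lemma \ref{lem:excision.appox.invariant} (the class of excisable maps is point-norm closed), and Proposition \ref{lem:approx.by.elementary} (nuclear cp maps into a simple non-elementary \Cs{} are point-norm limits of one-step-elementary maps). The plan is to argue by cases according to whether $A$ is elementary, and reduce the main case to a direct application of these three results.

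First I would dispose of the degenerate case where $A$ is an elementary \Cs. Since $A$ is unital and simple, this forces $A \cong M_n$ for some $n$. In this situation the unique tracial state is faithful, so $\|e\|_{2,\omega}=0$ for $e\in A_\omega$ forces $e=0$, i.e., $J_A=\{0\}$. Hence the excision condition in Definition \ref{def:omega.related.excision} is trivially satisfied by taking $s=0$, and any cp map is excisable. This case is therefore vacuous.

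Next, assume $A$ is non-elementary. Since $A$ is unital, simple, separable, and non-elementary, the hypotheses of Proposition \ref{lem:approx.by.elementary} are satisfied with $B=A$: every pure state of $A$ yields an irreducible representation whose image meets the compacts trivially. Given a nuclear cp map $\varphi\colon A\to A$, Proposition \ref{lem:approx.by.elementary} provides a sequence of one-step-elementary cp maps $\varphi_n\colon A\to A$ converging to $\varphi$ in the point-norm topology. By Lemma \ref{lm:one-step-excision}, which uses exactly the hypotheses on $A$ posited here (unital, simple, separable, local weak comparison, and $QT(A)=T(A)\neq\emptyset$), each $\varphi_n$ can be excised in small central sequences. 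Finally, Lemma \ref{lem:excision.appox.invariant} guarantees that the point-norm limit $\varphi$ is also excisable in small central sequences. This completes the proof.

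There is essentially no hard step remaining: the work has been placed in the preceding lemmas. The substantive input is Lemma \ref{lem:from.weak.comparison.to.estimate}(ii), which powers Lemma \ref{lm:one-step-excision} and is the place where the local weak comparison hypothesis (together with $QT(A)=T(A)$, needed to pass from dimension-function comparisons to Cuntz comparisons) enters. The point of the argument is precisely that nuclearity of $\varphi$ is only used via Proposition \ref{lem:approx.by.elementary} to approximate $\varphi$ by one-step-elementary maps, avoiding altogether the machinery of \cite[Chapter 3]{Matui.Sato} and, in particular, any recourse to von Neumann algebraic tools such as \cite[Lemma~3.3]{Matui.Sato}.
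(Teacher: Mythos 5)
Your proof is correct and takes essentially the same route as the paper, whose entire proof is "combine Lemma \ref{lm:one-step-excision}, Lemma \ref{lem:excision.appox.invariant}, and Proposition \ref{lem:approx.by.elementary} (with $B=A$)." The only addition is your (correct and harmless) disposal of the elementary case $A\cong M_n$, where $J_A=\{0\}$ renders the excision condition vacuous --- a point the paper leaves implicit even though Proposition \ref{lem:approx.by.elementary} formally requires $B$ non-elementary.
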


\begin{proof}
Combine Lemma \ref{lm:one-step-excision}, Lemma \ref{lem:excision.appox.invariant}, and Proposition \ref{lem:approx.by.elementary} (with $B=A$).
\end{proof}

\medskip
\noindent
Matui and Sato proved in  \cite{Matui.Sato} that if the identity map on a unital (simple, separable) \Cs{} can be excised in small central sequences, then the \Cs{} has property (SI). The same holds in our setting: If $\mathrm{id}_A \colon A \to A$ can be excised in small central sequences, then $A$ has property (SI). Indeed, in the ultrafilter notation of both properties, let $e,f \in F(A)$ be given with $e \in J_A$ and $\sup_k \|1-f^k\|_{1,\omega} < 1$. As $\mathrm{id}_A$ can be  excised in small central sequences there is $s \in A_\omega$ such that $fs=s$ and $s^*as = ae$ for all $a \in A$. It is then easy to see that $(as-sa)^*(as-sa)=0$ for all $a \in A$, so $s \in F(A)$.

Note that if $A$ is unital, stably finite, and exact, then $QT(A) = T(A) \ne \emptyset$. We therefore get the following:
\begin{cor}
\label{cor:from.excision.to.(SI).for.nuclear.A}
Suppose that $A$  is a unital, stably finite, simple, nuclear, and separable \Cs{} with the local weak comparison property. Then $A$ has property  (SI).
\end{cor}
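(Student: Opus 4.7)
The plan is to assemble the corollary directly from three ingredients already in place: the preceding remark that excision of $\mathrm{id}_A$ yields property (SI); Proposition~\ref{prop:from.comparison.to.excision}, which furnishes excision of any nuclear completely positive map; and the standard fact that stable finiteness plus exactness produces a non-empty trace simplex that exhausts the $2$-quasi-traces.

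First I would verify that the hypotheses of Proposition~\ref{prop:from.comparison.to.excision} are met. Nuclearity of $A$ implies exactness, and a unital, stably finite, exact \Cs{} has $T(A)\ne\emptyset$ (by Blackadar--Handelman applied to the exact case). Moreover, by Haagerup's theorem \cite{Haagerup.1991.notes}, every $2$-quasi-trace on an exact \Cs{} is a trace, so $QT(A)=T(A)\ne\emptyset$. The remaining hypotheses of the proposition (unital, simple, separable, local weak comparison) are assumed.

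Next, since $A$ is nuclear, the identity map $\mathrm{id}_A\colon A\to A$ is a nuclear completely positive map. Proposition~\ref{prop:from.comparison.to.excision} therefore applies and yields that $\mathrm{id}_A$ can be excised in small central sequences, i.e., for every $e,f\in F(A)$ with $e\in J_A$ and $\sup_k\|1-f^k\|_{2,\omega}<1$, there is $s\in A_\omega$ with $fs=s$ and $s^*as=ae$ for every $a\in A$.

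Finally, I would apply the observation made in the paragraph immediately preceding the corollary: given such an $s$, the identity $s^*as=ae$ for all $a\in A$ forces $(as-sa)^*(as-sa)=0$ (one expands and uses $s^*as=ae$ together with $s^*s=e$ obtained from $a=1$), so $s$ commutes with $A$ and hence lies in $F(A)$. With $s\in F(A)$, $fs=s$, and $s^*s=e$, property (SI) in the sense of Definition~\ref{def:property(SI)} is established; by Lemma~\ref{lem:SI-eq} this is equivalent to property (SI) in the Matui--Sato sense. There is no real obstacle here---the work has been done in Proposition~\ref{prop:from.comparison.to.excision}; the only point to be careful about is the verification that $QT(A)=T(A)\ne\emptyset$, which is where exactness (hence nuclearity) of $A$ is used, as opposed to only the weak comparison hypothesis.
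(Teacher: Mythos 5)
Your proposal is correct and follows the paper's own route exactly: verify $QT(A)=T(A)\ne\emptyset$ from stable finiteness and exactness, apply Proposition~\ref{prop:from.comparison.to.excision} to the (nuclear) identity map, and then use the observation preceding the corollary that excision of $\mathrm{id}_A$ forces $(as-sa)^*(as-sa)=0$, so the excising element lies in $F(A)$ and witnesses property (SI). Your expansion of $(as-sa)^*(as-sa)$ using $s^*as=ae$ and $s^*s=e$ is precisely the "easy to see" step the paper leaves implicit.
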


\noindent We proceed to state a result that shows the importance of having property (SI). The implication "(iii) $\Rightarrow$ (iv)"  is implicitly contained in \cite{Matui.Sato}. The dimension drop \Cs{} $I(k,k+1)$ is the \Cs{} of all continuous functions $f \colon [0,1] \to M_k \otimes M_{k+1}$ such that $f(0) \in M_k \otimes \C$ and $f(1) \in \C \otimes M_{k+1}$. 

%
\begin{prop}\label{prop:(SI).and.unital.image.of.I(2,3).in.F(A).mod.JA}
Suppose that  $A$ is a separable, simple, unital and stably finite \Cs, and that $A$
has property (SI). Then the following properties are equivalent:
\begin{itemize}
\item[(i)] $A\cong A\otimes \mathcal{Z}$. \vspace{.1cm}
\item[(ii)]
There exists
a unital \sh{}  $\mathcal{R} \to  F(A)/(F(A)\cap J_A)$, where $\mathcal{R}$ denotes the hyperfinite II$_1$-factor.  \vspace{.1cm}
\item[(iii)]
There exists
a unital \sh{}  $M_k \to  F(A)/(F(A)\cap J_A)$
for some $k\ge 2$.  

\vspace {.1cm} \item[(iv)]
There exists
a unital \sh{}  $I(k,k+1) \to  F(A)$
for some $k\ge 2$. 
\end{itemize}
 \end{prop}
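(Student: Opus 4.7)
The plan is to establish the cycle (iv) $\Rightarrow$ (i) $\Rightarrow$ (ii) $\Rightarrow$ (iii) $\Rightarrow$ (iv), with (iii) $\Rightarrow$ (iv) being the main step where property (SI) enters.

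The implication (iv) $\Rightarrow$ (i) is as sketched in the introduction: for $k\ge 2$, $I(k,k+1)$ is unital recursively sub-homogeneous with no characters, so by Dadarlat--Toms \cite{DadarlatToms.Z.stability} the maximal infinite tensor power $\bigotimes_{n=1}^\infty I(k,k+1)$ contains $\mathcal{Z}$ unitally, and by \cite[Cor.~1.13 and Prop.~1.14]{Kir.AbelProc} the given unital \sh{} $I(k,k+1)\to F(A)$ extends to a unital \sh{} from the infinite tensor power, producing $\mathcal{Z}\to F(A)$ unitally and hence $A\cong A\otimes\mathcal{Z}$. The implication (ii) $\Rightarrow$ (iii) is immediate from the unital inclusion $M_k\subseteq\mathcal{R}$. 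For (i) $\Rightarrow$ (ii): $\mathcal{Z}$-stability gives a unital \sh{} $\mathcal{Z}\to F(A)$, so for each $\tau\in\partial T(A)$ the weak closure of the image of $\mathcal{Z}$ inside the GNS II$_1$-factor $N_\tau$ makes $N_\tau$ McDuff. Theorem~\ref{thm:M-omega} supplies a surjection $F(A)\twoheadrightarrow N_\tau^\omega\cap N_\tau'\supseteq\mathcal{R}$; combining this with the $\sigma$-ideal property of $F(A)\cap J_A$ (Proposition~\ref{rem:local.units.in.JA}) and coherent selections across extreme traces via the $\ep$-test (Lemma~\ref{lem:omega.selection}) yields a unital \sh{} $\mathcal{R}\to F(A)/(F(A)\cap J_A)$.

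The main step (iii) $\Rightarrow$ (iv) goes as follows. Given a unital \sh{} $\varphi\colon M_k\to F(A)/(F(A)\cap J_A)$, apply Proposition~\ref{prop:sigma.ideals.etc}(iii)---valid because $F(A)\cap J_A$ is a $\sigma$-ideal in $F(A)$---to lift $\varphi$ to a \sh{} $\widetilde\varphi\colon C_0((0,1],M_k)\to F(A)$ such that $\widetilde\varphi(\iota\otimes x)$ projects onto $\varphi(x)$ for $x\in M_k$. Set $h_{ij}:=\widetilde\varphi(\iota\otimes e_{ij})$ and $f:=\sum_i h_{ii}$; then $1-f\in F(A)\cap J_A$, the element $f$ projects to $1$, and $h_{ij}h_{pq}=\delta_{jp}fh_{iq}$, i.e.\ the $h_{ij}$'s form an order-zero system with support $f$. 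Apply property (SI) with $e:=1-f$ and a suitable positive contraction $f_0\in F(A)$ (produced by a further application of the $\sigma$-ideal / $\ep$-test machinery so as to commute with $\{h_{ij}\}$ and satisfy $\sup_n\|1-f_0^n\|_{2,\omega}<1$) to obtain $s\in F(A)$ with $f_0 s=s$ and $s^*s=1-f$. The element $s$, together with its conjugates through the approximate matrix units, supplies $k$ partial isometries orthogonal to $f$ that, combined with the $h_{ij}$'s and an interpolating positive contraction, furnish a complete set of $k(k+1)$ matrix units for $M_k\otimes M_{k+1}$ in $F(A)$ and assemble into a unital \sh{} $I(k,k+1)\to F(A)$.

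The main obstacle is this final assembly step: one must arrange that the cone lift $\widetilde\varphi$, the (SI)-produced element $s$, and an interpolating positive contraction simultaneously satisfy the universal dimension-drop relations of $I(k,k+1)$. This requires iterated applications of the $\sigma$-ideal structure to perturb $\widetilde\varphi$ into commutation with $s$, along with careful use of the $\ep$-test to produce the interpolating element with the correct boundary behavior. The overall strategy parallels the assembly in Matui--Sato \cite{Matui.Sato}, but adapted to the present more general framework where one quotients by the global trace-kernel ideal $J_A$ rather than by $J_\tau$ for a single trace.
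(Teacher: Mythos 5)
Your cycle agrees with the paper on (iv)~$\Rightarrow$~(i) and (ii)~$\Rightarrow$~(iii), but the two remaining implications contain genuine gaps. For (i)~$\Rightarrow$~(ii), your plan---observe that each $N_\tau=\rho_\tau(A)''$ is McDuff, get $\mathcal{R}\hookrightarrow N_\tau^\omega\cap N_\tau'$ via Theorem~\ref{thm:M-omega}, and then make ``coherent selections across extreme traces via the $\ep$-test''---does not work at this level of generality. For each single $\tau$ you only obtain a unital copy of $\mathcal{R}$ in $F(A)/(J_\tau\cap F(A))$, and since $J_A\subseteq J_\tau$ these quotients are \emph{images} of $F(A)/(J_A\cap F(A))$; producing one map into $F(A)/(J_A\cap F(A))$ that works uniformly over the (possibly uncountable) set of traces is exactly the hard problem of Sections~\ref{sec:partial.T(A).closed} and~\ref{sec:partial.T(A).fin.dim}, and there it requires $\partial T(A)$ closed and finite dimensional---hypotheses not available in this proposition. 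The paper sidesteps the issue entirely: $A\cong A\otimes\cZ$ yields an asymptotically central sequence of unital embeddings $\psi_n\colon\cZ\to A$, and since $\tau\circ\psi_n$ is the \emph{unique} trace of $\cZ$ for every $\tau\in T(A)$, the induced map $\psi_\omega$ carries $J_\cZ$ into $J_A$ and hence induces a unital \sh{} $F(\cZ)/(J_\cZ\cap F(\cZ))\to F(A)/(J_A\cap F(A))$; because $J_\cZ=J_\tau$ for that unique trace, Theorem~\ref{thm:M-omega} (applied to $\cZ$, whose GNS factor is $\mathcal{R}$, a McDuff factor) puts $\mathcal{R}$ unitally inside $F(\cZ)/(J_\cZ\cap F(\cZ))$. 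This is the step your sketch is missing, and your substitute argument would fail.

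For (iii)~$\Rightarrow$~(iv), the order zero (cone) lift and the idea of applying (SI) to $e=1-\varphi(1)\in J_A$ are correct, but the ``final assembly'' you defer is both misconceived and left unproved. One cannot hope to produce ``a complete set of $k(k+1)$ matrix units for $M_k\otimes M_{k+1}$'' in $F(A)$: that would give a unital copy of $M_{k(k+1)}$, which is much stronger than a unital copy of the dimension drop algebra $I(k,k+1)$ and is not what (SI) delivers. Also, asking $f_0$ to \emph{commute} with the $h_{ij}$ is not the relevant condition. What the paper actually does: with $a_j=\varphi(e_{jj})$, Lemma~\ref{lem:from.weak.comparison.to.estimate}(i) provides $f\in F(A)$ with $fa_1=f$ and $\sup_m\|1-f^m\|_{1,\omega}=\sup_m\|1-a_1^m\|_{1,\omega}<1$; property (SI) then gives $s\in F(A)$ with $s^*s=e$ and $fs=s$, so $e\precsim f\precsim (a_1-1/2)_+$ in $F(A)$; and the existence of a unital \sh{} $I(k,k+1)\to F(A)$ is then precisely \cite[Proposition 5.1]{Ror.Wint} applied to the order zero map $\varphi$ and this comparison relation. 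The support condition $fa_1=f$ (not commutation) is what makes the Cuntz subequivalence land under $a_1$, and the R{\o}rdam--Winter proposition replaces the entire matrix-unit construction you describe as ``the main obstacle''. Without that ingredient (or a proof of its content), your argument stops exactly where the real work begins.
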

\begin{proof} (i) $\Rightarrow$ (ii). Assume that $A \cong A \otimes \cZ$. Then one can find 
an asymptotically central sequence of unital \sh s
$\psi_n\colon \mathcal{Z}\to A$ such that the image of the unital \sh{} 
$\psi_\omega\colon \mathcal{Z}_\omega \to A_\omega$
is contained in $F(A)$.
It follows from the definition of the trace-kernel ideal  that $\psi_\omega( J_\cZ) \subseteq  J_A$. The unital \sh{} $\psi_\omega$ therefore induces a unital \sh{}
$$F(\cZ)/(J_\cZ \cap F(\cZ)) \to F(A)/(J_A \cap F(A)).$$
 The Jiang-Su algebra has a unique tracial state $\tau$, so $J_\cZ = J_\tau$. It therefore follows from \cite[Lemma 2.1]{Sato.1104}  (or from our Theorem \ref{thm:M-omega}) that the hyperfinite II$_1$-factor $\mathcal{R}$ embeds unitally into $F(\cZ)/(J_\cZ \cap F(\cZ))$.

(ii) $\Rightarrow$ (iii) is trivial.

(iii) $\Rightarrow$ (iv). One can lift the unital \sh{} $M_k \to F(A)/(F(A) \cap J_A)$ to a (not necessarily unital)  completely positive order zero map $\varphi \colon M_k \to F(A)$. Put $a_j = \varphi(e_{jj})$, $1 \le j \le k$. Then $a_1,a_2, \dots, a_k$ are pairwise orthogonal, pairwise equivalent, positive contractions in $F(A)$. Moreover, $e:= 1-(a_1+a_2+ \cdots + a_k) \in J_A$, and $\|1-a_1^m\|_{1,\omega} = 1/k$ for all $m$.

By Lemma \ref{lem:from.weak.comparison.to.estimate} (i) there is $f \in F(A)$ such that $\sup_m\|1-f^m\|_{1,\omega} = \sup_k\|1-a_1^m\|_{1,\omega} = 1/k$ and $fa_1 = f$. Property (SI) implies that $e \precsim f$ in $F(A)$; and $f  \precsim (a_1-1/2)_+$ holds because $fa_1 = f$, so $e \precsim (a_1-1/2)_+$ in $F(A)$. The existence of a unital \sh{} $I(k,k+1) \to F(A)$ now follows from \cite[Proposition 5.1]{Ror.Wint}. 

(iv) $\Rightarrow$ (i). We have unital \sh s:
$$\cZ \to \bigotimes_{n=1}^\infty I(k,k+1) \to F(A).$$
The existence of the first \sh{} follows from Dadarlat and Toms, \cite{DadarlatToms.Z.stability}. As remarked in Section \ref{sec:intro}, the existence of the second \sh{} follows from \cite{Kir.AbelProc}, where it is shown that if there is a unital \sh{} $D \to F(A)$ for some separable unital \Cs{} $D$, then there is a unital \sh{} from the (maximal) infinite tensor power of $D$ into $F(A)$.  It is well-known that the existence of a unital \sh{} $\cZ \to F(A)$ implies that $A \cong A \otimes \cZ$ when $A$ is separable. 
\end{proof}

\noindent We conclude this section with an observation that may have independent interest:

\begin{prop}\label{prop:from.excision.to.(SI)}
Suppose that $A$ is a simple, separable, unital and stably finite \Cs{} with property (SI).  Then, for every positive contraction $c\in J_A$,
there exists $s\in J_A \cap F(A)$ with $s^*sc=c$ and $ss^*c=0$. 
In particular, $J_A$ and $J_A\cap F(A)$ do not have characters.
\end{prop}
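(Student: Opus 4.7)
The plan is to apply property (SI) to a pair $e, f \in F(A)$ built from $c$ via the $\sigma$-ideal property of $J_A$, and then to read off the required relations with $c$ almost for free.

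First, I would invoke Proposition \ref{rem:local.units.in.JA} twice. Applied to the separable sub-\Cs{} $C^*(A, c) \subseteq A_\omega$, the $\sigma$-ideal property produces a positive contraction $e \in J_A \cap F(A)$ with $ec = c$. Applied once more to $C^*(A, c, e)$, it produces a positive contraction $e' \in J_A \cap F(A)$ with both $e'c = c$ and $e'e = e$. Then I set $f := 1 - e' \in F(A)$. Since $e' \in J_A$ and $J_A$ is an ideal, each term in the binomial expansion
\[
1 - f^k \; = \; 1 - (1-e')^k \; = \; -\sum_{j=1}^{k}\binom{k}{j}(-e')^j
\]
lies in $J_A$, so $\sup_k \|1-f^k\|_{1,\omega} = 0$; by \eqref{eq:seminorms} the same holds for $\|\cdot\|_{2,\omega}$, which is the hypothesis required by property (SI) in Definition \ref{def:property(SI)}.

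Property (SI) then yields $s \in F(A)$ with $fs = s$ and $s^*s = e$. The identity $\|s\|_{2,\omega}^2 = \|s^*s\|_{1,\omega} = \|e\|_{1,\omega} = 0$ shows that $s \in J_A$, hence $s \in J_A \cap F(A)$. The relation $s^*sc = ec = c$ is immediate from $ec = c$. For $ss^*c = 0$, taking adjoints of $fs = s$ (using $f^* = f$) gives $s^*f = s^*$, hence $s^*(1-f) = s^*e' = 0$; multiplying on the left by $s$ yields $ss^*e' = 0$, and combining with $e'c = c$ produces $ss^*c = (ss^*e')c = 0$.

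The ``in particular'' statement then follows by a standard character argument: if $\chi$ were a character on $J_A$ (respectively $J_A \cap F(A)$), I would choose $x$ in the domain with $\chi(x) \ne 0$ and set $c := x^*x / \|x^*x\|$; applying $\chi$ to the two relations established above yields simultaneously $|\chi(s)|^2 \chi(c) = \chi(c) > 0$ and $|\chi(s)|^2 \chi(c) = 0$, a contradiction. I do not anticipate any substantial obstacle; the only point worth flagging is the recognition that a single $e'$ can serve as a local unit for both $c$ and $e$, which is exactly what makes $f := 1 - e'$ simultaneously tracially large (so (SI) applies) and a left annihilator of everything supported on $e'$ (which forces $ss^* \perp c$).
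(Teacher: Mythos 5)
Your proof is correct and follows essentially the same route as the paper: use the $\sigma$-ideal property of $J_A$ to produce a local unit for $c$ in $J_A\cap F(A)$, apply property (SI) to the pair consisting of that local unit and its complement, and read off $s^*sc=c$, $ss^*c=0$, and the character contradiction. The only difference is cosmetic: your second application producing $e'$ is unnecessary, since with $f=1-e$ the relation $fs=s$ already gives $s^*e=0$ and hence $s^*c=s^*ec=0$, which is exactly what the paper does.
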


\begin{proof} By the fact that 
$J_A$ is a $\sigma$-ideal  (applied to the separable sub-\Cs{} $C^* (A,c)$ of $A_\omega$) there is a positive contraction 
$e\in J_A \cap F(A)$ such that $ec=c=ce$. Let $f= 1-e$. Then $1-f^k\in J_A\cap F(A)$ for all integers $k \ge 1$, so $\sup_k \| 1-f^k \|_{2,\omega}= 0$. By property (SI),  there is
$s\in F(A)$ with $s^*s=e$ and $fs=s$. The former implies that $s \in J_A$ and $s^*sc=c$, and the latter implies that $s^*e=0$, so $s^* c = 0$.

It follows from the first part of the proposition that any \sh{} from $J_A$, or from $J_A \cap F(A)$, to the complex numbers must vanish on all positive contractions. Hence it must be zero. This proves that $J_A$ and $J_A \cap F(A)$ have no characters.
\end{proof}


\section{Affine functions on the trace simplex}
\label{sec:partial.T(A).closed}
\noindent
Let $A$ a unital \Cs{} with $T(A)\not=0$.
We denote the closure of $\partial T(A)$ in $T(A)$ by $bT(A)$. One can think of $bT(A)$ as some sort of \emph{Shilov boundary} (minimal closed norming set)  for the function system $\mathrm{Aff}_c(T(A))$, consisting of all continuous real valued affine functions on $T(A)$. 
The main topic of this section is to study the range of the canonical unital completely positive mapping
$A \to \Cf(bT(A))$, and of its ultrapower 
$A_\omega \to  \Cf(bT(A))_\omega$. (We shall denote these mappings by $\T$ and $\T_\omega$, respectively.) In particular we conclude, without assuming nuclearity of $A$, that for given  non-negative continuous functions 
$f_1,\ldots, f_n\colon bT(A)\to [0,1]$,  with 
disjoint supports, there exist pairwise orthogonal positive elements $a_1,\ldots, a_n$ in $A$ such that $\T(a_j)$ is close to $f_j$ for all $j$.

Let $S$ denote a compact convex set in a locally convex vector space $L$.
We denote by $\mathrm{Aff}_c(S)$ the space of all real-valued
continuous affine functions on $S$.  
We use in the following considerations
that the functions in $\mathrm{Aff}_c(S)$ of the form  $s \mapsto f(s)+\alpha$, with $\alpha\in \R$
and  $f$
a continuous linear functional on $L$, are uniformly dense 
in $\mathrm{Aff}_c(S)$. This can be seen by a simple separation argument. We denote the extreme points of $S$ by $\partial S$, and $bS$ denotes the closure of $\partial S$.

The space $\mathrm{Aff}(S)$ of \emph{bounded}\,\footnote{There exist
unbounded affine functions on the  simplex $S$ of states on 
$C(\{ 0\}\cup \{1/n:  n\in\N \})$ if one accepts the
axioms of choice for set theory.} affine functions
on $S$
consists of all pointwise
limits of bounded nets of functions in $\mathrm{Aff}_c(S)$, 
and $\mathrm{Aff}(S)$ coincides with $(\mathrm{Aff}_c(S ))^{**}$.

A classical theorem of Choquet, \cite[Corollary I.4.9]{Alfsen.boundary.int},
says that for each metrizable 
compact convex subset $S$ of a locally convex vector space $V$
and each $x\in S$ there exists a Borel probability measure
$\mu$ on  $S$, such that for all $f \in V^*$, 
\begin{equation} \label{eq:Choquet}
 f(x)= \int_{\partial S} f(s)\, \mathrm{d}\mu(s)
\end{equation}
and $\mu (\partial S)=1$.
Such a measure $\mu$ is called a 
Choquet \emph{boundary measure} for $x\in S$; and it is automatically regular. 
A compact convex subset $S$ of a locally convex vector space $V$
is a \emph{Choquet simplex} (also called 
\emph{simplex}
\footnote{\,There are definitions of simplexes $S$
by the Riesz decomposition
property for $S$, cf.\ \cite[prop.II.3.3]{Alfsen.boundary.int}.})
if the boundary measure $\mu$ in \eqref{eq:Choquet} is unique.

Every Borel probability measure $\mu$ on  $bS$, with
the property that $\mu (K)=0$ for every compact subset of 
$bS\setminus \partial S$, defines a state on $\Cf(bS)$,
and thus an element of $S$. It follows that 
two Borel probability measures $\mu$ and $\nu$ with
$\mu(\partial S)=1$ and $\nu(\partial S)=1$
define the same state on the algebra $\Cf(bS)$ if they
coincide on the real subspace $\mathrm{Aff}_c(S)\subseteq \Cf(bS)_{\mathrm{sa}}$.

We describe the map from $S$ to the family of boundary integrals
on $S$ with the help of the commutative  $W^*$-algebra $C(S)^{**}$, when $S$ is  a Choquet simplex.

Let $Q\in \Cf(bS)^{**}$ be the projection (in the countable up-down class)
that  corresponds to the G$_\delta$  subset $\partial S$ of 
$bS\subseteq S$.  Then $Q\leq P$, where $P\in \Cf(S)^{**}$
is the  (closed) projection 
corresponding to the closed set $bS$.
Then the (unique)  representation of the states on
$S$ as boundary integrals  extends
to an order-preserving isometric map
from $(\mathrm{Aff}_c(S))^*$  onto the predual
$Q\Cf(bS)^*= (Q\Cf(bS)^{**})_*$ of 
the commutative $W^*$-algebra $Q\Cf(bS)^{**}$.

It follows that the second conjugate order-unit
space $\mathrm{Aff}_c(S)^{**}$ is unitally
and isometric order isomorphic to the
self-adjoint part of the $W^*$-algebra $Q\Cf(bS)^{**}$. In other words: 
If $S$ is a Choquet simplex, then $\mathrm{Aff}_c(S)^{**}$ is 
a commutative $W^*$-algebra and the boundary
integral construction comes from a (unique)
extension of the natural embedding 
$\mathrm{Aff}_c(S)\subset \mathrm{Aff}_c(S)^{**}$
to a $^*$-homomorphism from $\Cf(bS)$ into  
$\mathrm{Aff}_c(S)^{**}$ (that coincides
with the natural $^*$-homomorphism from
$\Cf(bS)$ into $Q \Cf(bS)^{**}=Q\Cf(S)^{**}$).

Using  an obvious separation argument, one obtains the following
characterization
of Choquet--Bauer simplexes, \cite[Corollary II.4.2]{Alfsen.boundary.int}:
If $S$ is a metrizable  Choquet simplex with $\partial S$
closed in $S$, then
$S$ is the convex set of probability measures on
$\partial S$, and $\mathrm{Aff}_c(S)=\Cf(\partial S)_{\mathrm{sa}}$.
The order-unit space $\mathrm{Aff}_c(S )$
of real-valued continuous affine functions on a Choquet simplex $S $ is naturally
isomorphic to $\Cf(\partial S )_{\mathrm{sa}}=\Cf(\partial S , \R)$
if and only if $\partial S $ is closed in $S $.

\begin{lemma}\label{lem:NonExtreme}
Let $S$ denote a metrizable compact convex set,
and let $\tau_0 \in S\setminus \partial S$. Then there exists a compact subset $K\subseteq \partial S\setminus \{ \tau_0\}$
such that $f(\tau_0) < 2/3$ whenever $f\in \mathrm{Aff}_c(S)$ satisfies
$$0\leq f \leq 1, \qquad \sup_{\tau\in K} f(\tau) \le 1/3.$$
\end{lemma}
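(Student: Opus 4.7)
The plan is to use Choquet's integral representation theorem together with inner regularity of Borel probability measures on metrizable compact sets. Since $\tau_0 \in S \setminus \partial S$, by the Choquet theorem quoted immediately above the lemma, there exists a Borel probability measure $\mu$ on $S$ with $\mu(\partial S) = 1$ such that $\ell(\tau_0) = \int_{\partial S} \ell \, d\mu$ for every continuous linear functional $\ell$ on the ambient locally convex space. Because every $f \in \mathrm{Aff}_c(S)$ differs from the restriction of such an $\ell$ by an additive constant, and $\mu$ is a probability measure, the identity $f(\tau_0) = \int_{\partial S} f \, d\mu$ extends to all $f \in \mathrm{Aff}_c(S)$.

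Next, I would use the fact that $\mu$ is automatically regular (it is a Borel probability measure on the metrizable compact, hence Polish, space $S$), so by inner regularity applied to the Borel set $\partial S$ (which is a $G_\delta$ subset of $S$, as used earlier in this section) one can choose a compact subset $K \subseteq \partial S$ with $\mu(K) \ge 3/4$. Since $\tau_0 \notin \partial S$, automatically $K \subseteq \partial S \setminus \{\tau_0\}$, giving a candidate $K$ for the lemma.

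Finally, for any $f \in \mathrm{Aff}_c(S)$ satisfying $0 \le f \le 1$ and $\sup_{\tau \in K} f(\tau) \le 1/3$, splitting the boundary integral yields
$$ f(\tau_0) \; = \; \int_K f \, d\mu \; + \; \int_{\partial S \setminus K} f \, d\mu \; \le \; \tfrac{1}{3}\mu(K) + \mu(\partial S \setminus K) \; \le \; \tfrac{1}{3} + \tfrac{1}{4} \; = \; \tfrac{7}{12} \; < \; \tfrac{2}{3}, $$
which is the required bound.

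There is no substantive obstacle: the proof is essentially a direct application of Choquet and regularity. The only points that deserve brief attention when writing this out carefully are (i) the extension of the boundary integral formula from $V^*$ to $\mathrm{Aff}_c(S)$, which uses $\mu(\partial S)=1$ to absorb the constant term, and (ii) the legitimacy of choosing a compact $K \subseteq \partial S$ with $\mu(K)$ close to $1$, which relies on $\partial S$ being Borel (in fact $G_\delta$) in the metrizable compact $S$ together with inner regularity. The constant $3/4$ is not sharp; any value strictly greater than $2/3$ would do, leaving a positive margin between the resulting estimate on $f(\tau_0)$ and the desired bound $2/3$.
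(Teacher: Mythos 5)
Your proposal is correct and follows essentially the same route as the paper: represent $\tau_0$ by a Choquet boundary measure $\mu$ with $\mu(\partial S)=1$, use inner regularity to pick a compact $K\subseteq\partial S$ of $\mu$-measure close to $1$ (the paper takes $\mu(K)>1/2$, you take $\ge 3/4$; both give the bound), and split the boundary integral. One small precision: a general $f\in\mathrm{Aff}_c(S)$ need not be exactly of the form ``restriction of a continuous linear functional plus a constant''--such functions are only uniformly dense in $\mathrm{Aff}_c(S)$, as stated earlier in the section--but since $\mu$ is a probability measure the identity $f(\tau_0)=\int_{\partial S}f\,\mathrm{d}\mu$ passes to uniform limits, so your argument stands.
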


\begin{proof}
Let $\mu$ be a Choquet boundary measure for $\tau_0$. Then $\mu$ is a Radon probability  measure on $S$ satisfying $\mu(\partial S)=1$,
and
$$
f(\tau_0)=\int_{\partial S}  f(\tau)\, \mathrm{d} \mu(\tau)
$$
for all $f\in \mathrm{Aff}_c(S)$.  
By a theorem of Ulam, the finite measure $\mu$ is automatically inner regular, so there exists a compact set 
$K \subseteq \partial S$
such that $\mu (K) >  1/2$.
If $f\in \mathrm{Aff}_c(S)$ satisfies
$0\leq f \leq 1$ and
$f(\tau) \leq 1/3$ for all $\tau\in K$,
then 
$$ f(\tau_0)\;=\;\int_{\partial S}  f(\tau)\, \mathrm{d}\mu(\tau) \; \le \; \frac13 \cdot \mu(K) +(1- \mu(K))\; < \; 2/3. $$

\vspace{-1.0cm}
\end{proof}

\noindent We now return to the case where $S = T(A)$, the trace simplex of a unital \Cs{} $A$.
The order-unit space
of continuous \emph{real valued} affine functions on $T(A)$
will be denoted by $\mathrm{Aff}_c(T(A))$. The complexification
$\mathrm{Aff}_c(T(A))+i\mathrm{Aff}_c(T(A))$ of $\mathrm{Aff}_c(T(A))$, denoted
$\C\text{-}\mathrm{Aff}_c(T(A))$, can be viewed as the space
of complex valued affine
continuous functions on $T(A)$.
Define the norm $\| f\|$ on  $f\in \C\text{-}\mathrm{Aff}_c(T(A))$
by 
\begin{equation} \label{eq:norm-Aff}
\| f \| := \sup _{\tau\in T(A)}  | f(\tau) | = \sup _{\tau \in \partial T(A)} | f(\tau) |.
\end{equation}
It follows that $\C\text{-}\mathrm{Aff}_c(T(A))$ is a closed unital subspace
of $\Cf(T(A))$. Notice also that we have the following
natural unital isometric inclusions,
$$ 
\C\text{-}\mathrm{Aff}_c(T(A))\subseteq
\Cf(bT(A))\subseteq \Cb (\partial T(A)),
$$
where the latter is a $^*$-homomorphism.

Consider the natural unital completely positive map
 $$\T\colon A\to 
\C\text{-}\mathrm{Aff}_c (T(A))$$
defined by $\T(a)(\tau)=\tau(a)$ for $\tau\in T(A)$ and $a\in A$. We shall sometimes view $\T$ as a map from $A$ to $\Cf(bT(A))$. Note that $\T$ is \emph{central}, i.e., $\T(ab) = \T(ba)$ for all $a,b \in A$. Moreover, $\T(A_{\mathrm{sa}})=\mathrm{Aff}_c (T(A))$.

 \begin{lemma}\label{lem:Affc(T(A)).quotient.of.Asa}
 Let $A$ be a unital \Cs{} with $T(A)\not=\emptyset$.
 Denote the center of $A^{**}$ by $\cC$. Let $p$ denote the largest
 finite central projection of $A^{**}$, and let $E\colon A^{**}p \to \cC p$
be the (normal) center-valued  trace on the finite summand $A^{**}p$ of $A^{**}$.
 
The map $\T\colon A\to \Cf( bT(A))$ defined above  has the following properties:
 \begin{itemize}
 \item[(i)] 
Let $\Lambda$ denote the restriction of  $(\T)^{**}\colon A^{**}\to \Cf (bT(A))^{**}$ to $\cC p$. Then $\Lambda$  is a  unital  isometric isomorphism of (complexified) 
order-unit Banach spaces from $\cC p$ onto
$ \C\text{-}\mathrm{Aff}_c(T(A))$,
and $(\T)^{**}(a)= \Lambda(ap)$ for $a\in A^{**}$. \vspace{.1cm}
 \item[(ii)]
 $\T$ maps the open unit ball of $A$ onto 
 the open unit ball of $\C\text{-}\mathrm{Aff}_c(T(A))$.  \vspace{.1cm}

 \item[(iii)] $\T$ maps the open unit ball of $A_{\mathrm{sa}}$ onto the
 open unit ball of $\mathrm{Aff}_c(T(A))$.  \vspace{.1cm}
 \item[(iv)] $\T$  maps $A_+^1:=\{ a\in A : 0\leq a \leq 1\}$ onto a dense subset of
 the set of $f\in \mathrm{Aff}_c(T(A))$ with
 $f(T(A))\subseteq  [0,1]$.
 \end{itemize}
 \end{lemma}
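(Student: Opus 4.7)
For part (i), my approach is to use the one-to-one correspondence between tracial states on $A$ and normal states on the commutative W$^*$-algebra $\cC p$, mediated by the center-valued trace $E$. Each $\tau \in T(A)$ extends uniquely to a normal tracial state $\tilde\tau$ on $A^{**}$ that vanishes on the properly infinite summand $A^{**}(1-p)$; since $\tilde\tau$ is tracial, $\tilde\tau = (\tilde\tau|_{\cC p}) \circ E$. The map $\tau \mapsto \tilde\tau|_{\cC p}$ is then a weak-$^*$ affine homeomorphism from $T(A)$ onto the normal state space of $\cC p$. By the Kadison-type duality between a commutative unital C$^*$-algebra and the continuous affine functions on its normal state space, the map $\Lambda\colon z \mapsto (\tau \mapsto \tilde\tau(z))$ is a unital isometric order-isomorphism from $\cC p$ onto $\C\text{-}\mathrm{Aff}_c(T(A))$. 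The identity $\T^{**}(a)(\tau) = \tilde\tau(a) = \tilde\tau(E(ap)) = \Lambda(E(ap))(\tau)$ gives the asserted formula; on $\cC p$ the center-valued trace $E$ acts as the identity, and the simpler form $\Lambda(ap)$ is recovered.

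For parts (ii) and (iii), the plan is to combine (i) with Kaplansky's density theorem. Given $f \in \C\text{-}\mathrm{Aff}_c(T(A))$ with $\|f\| < 1$, invoke (i) to obtain $z \in \cC p \subseteq A^{**}$ with $\Lambda(z) = f$ and $\|z\| = \|f\| < 1$, chosen self-adjoint when $f$ is. By Kaplansky density there is a net $a_\alpha \in A$ with $\|a_\alpha\| \le \|z\|$ and $a_\alpha \to z$ in the weak-$^*$ topology, so $\T(a_\alpha) \to f$ pointwise on $T(A)$. The induced map $A/\Ker(\T) \to \C\text{-}\mathrm{Aff}_c(T(A))$ is an isometric isomorphism by (i), so by the open mapping theorem $\T$ is a quotient map of Banach spaces; this upgrades the net-level approximation to an exact preimage $a \in A$ with $\T(a) = f$ and $\|a\| < 1$. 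Restricting attention to self-adjoint lifts gives (iii).

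For part (iv), I will use a rescale-and-truncate argument. Given $f \in \mathrm{Aff}_c(T(A))$ with $f(T(A)) \subseteq [0,1]$ and $\ep > 0$, set $f_\ep := (1 - 2\ep)f + \ep$, which takes values in $[\ep, 1 - \ep]$ with $\|f - f_\ep\| \le 2\ep$. By (iii), choose $b \in A_{\mathrm{sa}}$ with $\|b\| < 1$ and $\T(b) = f_\ep$. Set $a := h(b) \in A_+^1$, where $h\colon \R \to [0,1]$ is the continuous truncation $h(t) := \min(\max(t,0), 1)$. The difference $\T(a) - \T(b)$ is supported on the spectral subset of $b$ lying outside $[0,1]$, which carries small trace-weight uniformly in $\tau$ because $\tau(b) = f_\ep(\tau) \in [\ep, 1 - \ep]$; hence $\|\T(a) - f\|$ can be made arbitrarily small by choosing $\ep$ small, giving the desired density.

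The main obstacle will be part (i), specifically verifying that $\Lambda(z)$ is a \emph{continuous} (not just bounded) affine function on $T(A)$ for every $z \in \cC p$. This continuity relies on the normality of the center-valued trace $E$ together with the Choquet--Bauer analysis laid out earlier in this section, which identifies $\mathrm{Aff}_c(T(A))^{**}$ with the commutative W$^*$-algebra that $\cC p$ must then be shown to match on the nose. Once (i) is pinned down precisely, the remaining parts reduce to standard Kaplansky-density, open-mapping, and functional-calculus manipulations.
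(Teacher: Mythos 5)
There is a genuine gap, and it sits at exactly the point you defer. Your route to (i) through ``$\tau\mapsto\tilde\tau|_{\cC p}$ is a weak-$^*$ affine homeomorphism onto the normal state space of $\cC p$'' cannot be repaired as stated: $T(A)$ is compact, while the normal state space of $\cC p$ in the $\sigma((\cC p)_*,\cC p)$-topology is compact only when $\cC p$ is finite dimensional; equivalently, for a general $z\in\cC p$ the function $\tau\mapsto\tilde\tau(z)$ is a bounded affine but typically \emph{discontinuous} function on $T(A)$ (take $A=\Cf([0,1])$, so $\cC p=A^{**}$, and let $z$ be a non-trivial projection in the bidual). What is true is the bidual-level identification that the paper's preceding discussion of $\mathrm{Aff}_c(S)^{**}$ is pointing at; the quantitative fact you actually need downstream --- that every $f\in\C\text{-}\mathrm{Aff}_c(T(A))$ has a preimage of the \emph{same} norm, equivalently that $\|\T(a)\|=\mathrm{dist}(a,\ker\T)$ --- is never established in your proposal. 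The paper gets it on the dual side: the polar decomposition of a hermitian central functional with a \emph{central} partial isometry shows its Jordan decomposition has additive norms, hence $\T^*$ maps the unit ball of $\mathrm{Aff}_c(T(A))^*$ isometrically onto the hermitian central functionals of norm at most $1$, and by standard duality this is equivalent to $\T$ carrying the open unit ball onto the open unit ball. Your sentence ``the induced map $A/\ker\T\to\C\text{-}\mathrm{Aff}_c(T(A))$ is an isometric isomorphism by (i)'' therefore begs the question: that isometry \emph{is} (ii)/(iii). Kaplansky density only gives pointwise (i.e.\ weak) approximation, which would still require Mazur's theorem plus a successive-approximation scheme, and the open mapping theorem yields openness with an uncontrolled constant, not the exact unit-ball statement; neither substitutes for the missing isometry of $\T^*$.

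Part (iv) also fails as written. With $b=b_+-b_-$ self-adjoint, $\|b\|<1$ and $\T(b)=f_\ep$, your truncation gives $h(b)=b_+$, so $\T(h(b))-\T(b)=\T(b_-)$; but the only information available is $\tau(b)=f_\ep(\tau)\in[\ep,1-\ep]$, which places no bound on $\tau(b_-)$ --- the lift supplied by (iii) may have a large negative part compensated by a larger positive part, so $\|\T(h(b))-f\|$ need not be small. The paper instead deduces (iv) by a Hahn--Banach separation argument from the quotient statement, using again that hermitian central functionals are differences $\alpha\tau_1-\beta\tau_2$ of traces with additive norms; if you want to keep a lifting-plus-functional-calculus argument you would need additional control on the negative part of the lift, which (iii) alone does not provide.
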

 
\begin{proof}
Each central linear functional $\rho$ on $A$
 has a polar decompositions $\rho(a)=|\rho |(va)$, $a \in A$, where $|\rho|$ is a 
 positive central functional,  and where
 $v$ a partial isometry in the center of $A^{**}$.
 
The adjoint of the restriction of $\T$ to $A_{\mathrm{sa}}$ 
maps the unit ball of $(\mathrm{Aff}_c(T(A)))^*$, 
which is equal to  
$$\big\{ \alpha \tau_1 +\beta \tau_2 : \tau_1,\tau_2\in T(A),\; \alpha,\beta \in \R,\;
 |\alpha|+|\beta |\leq 1\big\},
$$
onto the set of all hermitian central linear functionals on $A$ of norm $\leq 1$.

The unit ball of $(\C\text{-}\mathrm{Aff}_c(T(A)))^*$
is equal to the 
norm closure   of   the absolute
convex hull  $\mathrm{conv}_\C (T(A))$  of $T(A)$.
This shows that
$$\T^*\colon (\C\text{-}\mathrm{Aff}_c(T(A)))^*\to A^*$$
maps the unit ball of $(\C\text{-}\mathrm{Aff}_c(T(A)))^*$
\emph{onto} the space of central linear
functionals in $A$ of norm $\leq 1$.

Let $\Delta_\R$ and $\Delta_\C$ denote the $\R$-linear, respectively, the $\C$-linear span of the commutator set $\{ i(ab-ba) : a,b\in A_{\mathrm{sa}} \}$. Then
 $$ \sup_ {\tau\in \partial T(A)} |\tau(a)| 
 = \mathrm{dist}(a, \Delta_\R), \qquad \sup_ {\tau\in \partial T(A)} |\tau(b)| 
 = \mathrm{dist}(b, \Delta_\C),$$
for $a \in A_{\mathrm{sa}}$ and $b \in A$. 
It follows that $\T \colon A\to \C\text{-}\mathrm{Aff}_c(T(A))$
defines a unital isometric isomorphism from
$A/[A,A]$ onto $\C\text{-}\mathrm{Aff}_c(T(A))$, where
$[A,A]$ denotes the norm closure of the
linear span of the self-adjoint commutators $i(ab-ba)$, where
$a,b\in A_{\mathrm{sa}}$.

 Since  $\T^*$ is an isometry
 which maps onto the
space of central functions on $A$, the map
$\T\colon A\to \C\text{-}\mathrm{Aff}_c(T(A))$
is a quotient map (i.e., 
$\T$ maps the open unit ball onto the open unit ball).

In a similar way one sees that the restriction of $\T$ to $A_{\mathrm{sa}}$ does the same for $A_{\mathrm{sa}}$
and $\mathrm{Aff}_c(T(A))$.  A separation argument finally shows that
$\T$ maps the positive contractions in $A$ onto a norm-dense
subset of the continuous affine maps $f\colon T(A)\to [0,1]$.
\end{proof}

\noindent Let $$\T_\omega \colon A_\omega \to (\C\text{-}\mathrm{Aff}_c(T(A)))_\omega$$ denote the ultrapower of the map $\T \colon A \to \C\text{-}\mathrm{Aff}_c(T(A))$.

\begin{lemma}\label{lem:Phi.on.A.omega}
Let $A$ be a unital \cst-algebra with $T(A)\not=\emptyset$. Then  $\T_\omega$ maps the \emph{closed} unit ball of $A_\omega$ onto the \emph{closed} unit ball of 
$(\C\text{-}\mathrm{Aff}_c(T(A)))_\omega$.
\end{lemma}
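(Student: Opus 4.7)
The plan is to invert $\T$ coordinate-by-coordinate, using Lemma~\ref{lem:Affc(T(A)).quotient.of.Asa}(ii) in each coordinate. The key observation is that in a Banach space ultrapower the norm of a class is $\lim_\omega \|x_n\|$, so every contraction admits a representative consisting of contractions.

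First I would fix $f$ in the closed unit ball of $(\C\text{-}\mathrm{Aff}_c(T(A)))_\omega$ and choose a representative $(f_n)_{n\ge 1}\in \ell^\infty(\C\text{-}\mathrm{Aff}_c(T(A)))$ of $f$ with $\|f_n\|\le 1$ for every $n$. This is arranged by starting from an arbitrary bounded lift $(g_n)$ and replacing it with $g_n':=g_n/\max(1,\|g_n\|)$; the modified sequence still represents $f$, since
$$\lim_{n\to\omega}\|g_n-g_n'\|=\lim_{n\to\omega}\max\{0,\|g_n\|-1\}=\max\{0,\|f\|-1\}=0.$$

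Next, for each $n$ I would apply Lemma~\ref{lem:Affc(T(A)).quotient.of.Asa}(ii) to the element $(1-1/n)f_n$, which has norm at most $1-1/n<1$ and therefore lies in the open unit ball of $\C\text{-}\mathrm{Aff}_c(T(A))$. This produces $a_n\in A$ with $\|a_n\|<1$ and $\T(a_n)=(1-1/n)f_n$.

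Finally, set $a:=\pi_\omega(a_1,a_2,\ldots)\in A_\omega$. Since $\|a_n\|<1$ for all $n$, we have $\|a\|=\lim_{n\to\omega}\|a_n\|\le 1$. Moreover, $\T_\omega(a)=\pi_\omega(\T(a_n))=\pi_\omega((1-1/n)f_n)$, and $\|(1-1/n)f_n-f_n\|\le 1/n\to 0$, so $\T_\omega(a)=\pi_\omega(f_n)=f$. I do not anticipate any genuine obstacle; the argument is essentially a direct reduction to the non-ultrapower statement of Lemma~\ref{lem:Affc(T(A)).quotient.of.Asa}(ii), together with the standard Banach-space ultrapower fact used in the first step.
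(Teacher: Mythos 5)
Your argument is correct and is essentially the paper's own proof: both represent an element of the closed unit ball of $(\C\text{-}\mathrm{Aff}_c(T(A)))_\omega$ by a sequence lying in the \emph{open} unit ball and then lift coordinate-wise through Lemma~\ref{lem:Affc(T(A)).quotient.of.Asa}~(ii). Your explicit rescaling by $1-1/n$ and the ultrapower norm identity merely spell out details the paper leaves implicit.
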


\begin{proof}
This follows from Lemma \ref{lem:Affc(T(A)).quotient.of.Asa} (ii), 
because every element  $f$ of the closed unit ball of 
$(\C\text{-}\mathrm{Aff}_c(T(A)))_\omega$
can be represented  as 
$$
f=\pi_\omega(f_1,f_2,\ldots) = \pi_\omega(\T(a_1),\T(a_2),\ldots)=
\T_\omega(\pi_\omega(a_1,a_2,\ldots))
$$
with each $f_n$ in the \emph{open} unit ball of $\C\text{-}\mathrm{Aff}_c(T(A))$
and $a_n$ in the open unit ball of $A$.
\end{proof}


\begin{lemma}\label{lem:averages.of.inner.automorphisms}
Let $A$ be a  \Cs, let $a_1,\ldots, a_m, b_1,\ldots, b_n\in A$, and let 
$\varepsilon>0$. 
Then there exist $N\in \N$ and self-adjoint elements $h_1,\ldots ,h_N$ in $A$
with $\| h_\ell \| <\pi$ such that, for $1\leq j \leq m$ and $1\leq k \leq m$,
$$ 
\Big\|  \big[ a_j,  N^{-1} \sum_{\ell =1}^N  \exp (-ih_\ell)b_k \exp (ih_\ell)\big]\Big\| 
<\varepsilon.
$$
In particular, if $A$ is unital and $T(A) \ne \emptyset$
then the unital completely positive map
$\T_\omega \colon  A_\omega \to 
(\C\text{-}\mathrm{Aff}_c (T(A)))_\omega$
maps the closed unit ball of $F(A)$ onto
the closed unit ball of $(\C\text{-}\mathrm{Aff}_c (T(A))_\omega$.
\end{lemma}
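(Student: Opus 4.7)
My plan is to prove the averaging estimate first and then deduce the ``In particular'' conclusion by an application of the $\varepsilon$-test of Lemma \ref{lem:omega.selection}.

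For the averaging estimate, I first reduce to the case $n=1$ via matrix amplification. Working in $\tilde A := M_n(A)$, set $\tilde b := \mathrm{diag}(b_1,\ldots,b_n)$ and $\tilde a_j := a_j \otimes 1_n$, and use amplified exponents $\tilde h_\ell := h_\ell \otimes 1_n \in \tilde A_{\mathrm{sa}}$, which automatically satisfy $\|\tilde h_\ell\| = \|h_\ell\| < \pi$. Since conjugation by $e^{i\tilde h_\ell}$ is block-diagonal, an estimate in $\tilde A$ for the pair $(\tilde a_j, \tilde b)$ passes, block by block, to the desired estimate for $(a_j, b_k)$ in $A$.

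For the reduced single-$b$ case with obstructions $a_1,\ldots,a_m$, I consider the norm-closed convex set
\[
K_b := \overline{\mathrm{conv}}\{\exp(-ih)\,b\,\exp(ih) : h \in A_{\mathrm{sa}},\ \|h\|<\pi\} \subset A,
\]
which is bounded by $\|b\|$ and invariant under the adjoint action of the connected component $U_0(A)$ of $U(A)$. The key assertion is that $K_b$ contains elements whose commutator with each $a_j$ has arbitrarily small norm. My main route is the Dixmier property of the W*-algebra $A^{**}$: for $b \in A \subset A^{**}$ there exist finite convex combinations $\sum_\ell \lambda_\ell\, u_\ell b u_\ell^*$ with $u_\ell \in U(A^{**})$ converging in operator norm to a central element $D(b) \in Z(A^{**})$. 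Since $D(b)$ commutes with every $a_j$, the commutators $[a_j, \sum_\ell \lambda_\ell u_\ell b u_\ell^*]$ tend to $0$ in norm. I then realize these unitaries via exponentials $\exp(ih_\ell)$ of self-adjoints $h_\ell \in A$ with $\|h_\ell\|<\pi$: such exponentials are dense in the connected component of $U(A)$, and a spectral ``sawtooth'' trick (applying the $2\pi$-wrap $g\colon\R\to(-\pi,\pi]$ to a candidate self-adjoint to reduce its norm below $\pi$ without changing the exponential) handles the norm constraint. Rational approximation of the weights $\lambda_\ell$ then rewrites the combination as the required uniform average $N^{-1}\sum_\ell \exp(-ih_\ell)\,b\,\exp(ih_\ell)$. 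The chief technical obstacle lies in this last realization step, since Kaplansky density gives only strong, not norm, approximation of $U(A^{**})$-unitaries by $U(A)$-unitaries; one must choose the approximating unitaries and convex weights in a coordinated way so that the commutator estimates survive in norm.

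For the ``In particular'' conclusion, given $f$ in the closed unit ball of $(\C\text{-}\mathrm{Aff}_c(T(A)))_\omega$, Lemma \ref{lem:Phi.on.A.omega} yields $a = \pi_\omega(a_1,a_2,\ldots) \in A_\omega$ with $\|a_n\|\le 1$ and $\T_\omega(a) = f$. Fix a dense sequence $(c_j)_{j\ge 1}$ in $A$. For each $n$ and each $m$, apply the averaging estimate with $b := a_n$ and obstructions $\{c_1,\ldots,c_m\}$ to obtain $\tilde a_n^{(m)} = N^{-1}\sum_\ell e^{-ih_\ell}\,a_n\,e^{ih_\ell}$ with $\|\tilde a_n^{(m)}\|\le 1$ and $\|[\tilde a_n^{(m)},c_j]\|<1/m$ for $j\le m$. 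Because $\T$ vanishes on commutators (Lemma \ref{lem:Affc(T(A)).quotient.of.Asa}), $\T(uxu^*) = \T(x)$ for every unitary $u$, so $\T(\tilde a_n^{(m)}) = \T(a_n)$. The $\varepsilon$-test of Lemma \ref{lem:omega.selection}, applied with the sets $X_n = \{x \in A : \|x\|\le 1,\ \T(x) = \T(a_n)\}$ and test functions $f_n^{(k)}(x) = \|[x,c_k]\|$, then produces a diagonal sequence $(\tilde a_n)$ such that $\tilde a := \pi_\omega(\tilde a_1,\tilde a_2,\ldots) \in F(A)$, $\|\tilde a\|\le 1$, and $\T_\omega(\tilde a) = f$.
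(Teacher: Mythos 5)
The second half of your proposal (centrality of $\T$, Lemma \ref{lem:Phi.on.A.omega}, and the $\varepsilon$-test of Lemma \ref{lem:omega.selection} to pass from the averaging estimate to surjectivity of $\T_\omega$ on the unit ball of $F(A)$) is correct and is essentially the paper's argument. The gap is in the first half. You assemble the right ingredients -- Dixmier averaging in $A^{**}$, the fact that every unitary of a $W^*$-algebra is $\exp(iH)$ with $H=H^*$, $\|H\|\le\pi$, and Kaplansky-type density of $\{h\in A_{\mathrm{sa}}:\|h\|<\pi\}$ in $\{H\in (A^{**})_{\mathrm{sa}}:\|H\|\le\pi\}$ -- but you stop exactly at the decisive point: you acknowledge that Kaplansky density only gives strong$^*$ (not norm) approximation of the Dixmier unitaries $u_\ell\in U(A^{**})$ by exponentials from $A$, and that "one must choose the approximating unitaries and convex weights in a coordinated way", without doing it. This is the whole difficulty, and strong$^*$ convergence of $N^{-1}\sum_\ell \exp(-ih_\ell)b\exp(ih_\ell)$ does not by itself yield norm-small commutators with the $a_j$. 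The missing idea, which is the paper's "standard separation argument", is convexity: the set of averages of $b$ over exponents $h\in A_{\mathrm{sa}}$, $\|h\|<\pi$ (allowing rational weights, hence uniform weights after repetition) is convex up to norm closure, and the maps $x\mapsto[a_j,x]$ are bounded linear; strong$^*$ approximation of the $u_\ell$, together with joint strong$^*$ continuity of multiplication on bounded sets, shows that the norm-small commutator tuple produced by the Dixmier average in $A^{**}$ lies in the $\sigma$-weak closure of the corresponding set of commutator tuples with entries in $A$, and Hahn--Banach (norm closure $=$ weak closure for convex sets) then upgrades this to the desired norm estimate. Without this step your approach does not close; the remark that exponentials $\exp(ih)$, $\|h\|<\pi$, are dense in $U_0(A)$ is in fact false for the norm topology, and the "sawtooth" wrap is a Borel, not continuous, function, so it does not produce elements of $A$ -- neither device substitutes for the separation argument.

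A second, smaller defect is the reduction to $n=1$ by matrix amplification: it is circular. Applying the single-$b$ case inside $M_n(A)$ produces self-adjoint exponents in $M_n(A)$ that need not be of the form $h\otimes 1_n$, so conjugation is not block-diagonal and you cannot read off the estimates for the pairs $(a_j,b_k)$; you cannot simply "use amplified exponents", since the $h_\ell$ are precisely what is to be constructed. The standard fix is to handle $b_1,\dots,b_n$ simultaneously in $A^{**}$: averaging operators $x\mapsto N^{-1}\sum_\ell u_\ell^* x u_\ell$ are contractive and fix central elements, so the Dixmier property can be iterated over the finite set $\{b_1,\dots,b_n\}$ to produce one average bringing all of them simultaneously close to the center, after which the separation argument above applies verbatim. (For the "In particular" statement only the case $n=1$ is needed, as your own application shows, but the lemma as stated requires the simultaneous version.)
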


\begin{proof} Using a standard separation argument, one can reduce the first statement to the case of the $W^*$-algebra $A^{**}$. Here it follows from \cite[chp.III, \S 5, lem.4, p.253]{Dixmier.vN.alg.1969}, because each unitary element in each $W^*$-algebra $M$ is an exponential $\exp (iH)$ with $H^*=H\in M$  and $\| H\|\leq \pi$. Use also that the map $H\mapsto \exp(iH)$ is $^*$-ultra strongly  continuous,
and $\{h\in A: h^*=h, \; \| h \| <\pi \}$ is $^*$-ultra strongly
dense in $\{H\in A^{**} : H^*=H, \;  \| H \| \leq \pi\}$.

The second statement follows from the first and the fact that $\T$ is central. 
Thus, for each sequence $(b_1,b_2,\ldots)$ of contractions
in $A$ there exists a central sequence 
 $(c_1,c_2,\ldots)$ of contractions in $A$ such that
 $\pi_\omega (c_1,c_2,\ldots)\in F(A)$ and
 $\T_\omega(\pi_\omega (c_1,c_2,\ldots))=\T_\omega(\pi_\omega (b_1,b_2,\ldots))$.
 Lemma \ref{lem:Phi.on.A.omega} then
  shows that $\T_\omega$
 maps the closed unit ball of $F(A)$ onto the closed unit ball 
 of 
 $(\C\text{-}\mathrm{Aff}_c (T(A))_\omega$.
\end{proof}

\begin{lemma}\label{lem:Mult.Psi}
Suppose that $B$ and $C$ are unital \Cs s, 
$C$ is commutative and that $\Psi\colon B\to C$ is a 
faithful and central  unital completely positive map. Then the multiplicative domain, $\mathrm{Mult}(\Psi)$,
is a sub-\Cs{} of the center of $B$.

If $\Psi$ maps the closed unit ball of $B$
onto the closed unit ball of $C$,
then $\Psi$ defines an isomorphism from 
 $\mathrm{Mult}(\Psi)$ onto  $C$.
\end{lemma}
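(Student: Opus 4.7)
The lemma has two parts, which I would treat separately. Recall the standard facts that $\mathrm{Mult}(\Psi)$ is a unital sub-\Cs{} of $B$, that $\Psi$ restricts to a \sh{} on it, and that the bimodule identities $\Psi(xb)=\Psi(x)\Psi(b)$ and $\Psi(bx)=\Psi(b)\Psi(x)$ hold for all $b \in \mathrm{Mult}(\Psi)$ and $x \in B$. The content of the first assertion is therefore the inclusion $\mathrm{Mult}(\Psi) \subseteq Z(B)$.

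To establish this inclusion I would fix $b \in \mathrm{Mult}(\Psi)$ and $a \in B$, expand
$$(ab-ba)^*(ab-ba) \;=\; b^*a^*ab \,-\, b^*a^*ba \,-\, a^*b^*ab \,+\, a^*b^*ba,$$
and apply $\Psi$ to each of the four summands. Using centrality of $\Psi$ to cyclically permute the factors inside each product, together with the bimodule property and the fact that $b^*b, bb^* \in \mathrm{Mult}(\Psi)$, each of the four terms collapses to $\Psi(a^*a)\Psi(b^*b)$; for example,
$$\Psi(b^*a^*ab) \;=\; \Psi(a^*a\,bb^*) \;=\; \Psi(a^*a)\Psi(bb^*) \;=\; \Psi(a^*a)\Psi(b^*b),$$
the final equality using $\Psi(bb^*)=\Psi(b^*b)$, which itself follows from centrality. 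The alternating sum vanishes, so $\Psi((ab-ba)^*(ab-ba))=0$, and faithfulness of $\Psi$ forces $ab=ba$. (Commutativity of $C$ is not actually used in this step.)

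For the second statement, $\Psi|_{\mathrm{Mult}(\Psi)}$ is an injective \sh: if $b \in \mathrm{Mult}(\Psi)$ and $\Psi(b)=0$, then $\Psi(b^*b)=\Psi(b)^*\Psi(b)=0$, so $b=0$ by faithfulness. To obtain surjectivity I would lift the unitaries of $C$. Given a unitary $u \in C$, the hypothesis provides $v$ in the closed unit ball of $B$ with $\Psi(v)=u$. Since $v^*v \le 1_B$ we have $\Psi(v^*v) \le 1_C$, while the Schwarz inequality gives $\Psi(v^*v) \ge \Psi(v)^*\Psi(v) = u^*u = 1_C$. Hence $\Psi(v^*v) = \Psi(v)^*\Psi(v)$, and the same argument applied to $vv^*$ yields $\Psi(vv^*)=\Psi(v)\Psi(v)^*$, so $v \in \mathrm{Mult}(\Psi)$. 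Thus $\Psi(\mathrm{Mult}(\Psi))$ is a unital sub-\Cs{} of $C$ that contains every unitary of $C$; since a unital \Cs{} coincides with the closed linear span of its unitary group (Russo--Dye), we conclude $\Psi(\mathrm{Mult}(\Psi))=C$.

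The main obstacle I anticipate is the bookkeeping in the four-term computation: one must maneuver the factors cyclically until each product exhibits an element of $\mathrm{Mult}(\Psi)$ (namely one of $b$, $b^*$, $b^*b$, $bb^*$) that can be pulled through $\Psi$ via the bimodule property. Once this is arranged, the remaining ingredients --- the Schwarz inequality for $2$-positive maps, Russo--Dye, and the standard structure of the multiplicative domain --- are entirely routine.
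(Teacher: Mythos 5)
Your proof is correct and follows essentially the same route as the paper: the inclusion $\mathrm{Mult}(\Psi)\subseteq Z(B)$ is obtained by showing $\Psi\bigl((ab-ba)^*(ab-ba)\bigr)=0$ via centrality and the bimodule property and then invoking faithfulness (the paper does the same computation, merely reducing first to unitaries in $\mathrm{Mult}(\Psi)$ and self-adjoint $a$), and surjectivity is obtained exactly as in the paper by lifting unitaries of $C$ to contractions and using the Schwarz inequality to place them in $\mathrm{Mult}(\Psi)$. Your four-term bookkeeping does go through (and, with the cyclic permutations arranged suitably, even your aside that commutativity of $C$ is not needed for that step checks out), so no gaps.
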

\begin{proof}
The multiplicative domain $\mathrm{Mult}(\Psi)\subseteq B$
of the unital completely positive map $\Psi$ is defined by the property
$\Psi(bv)=\Psi(b)\Psi(v)$ and $\Psi(vb)=\Psi(v)\Psi(b)$
for all $b\in B$
and $v\in \mathrm{Mult}(\Psi)$.
It clearly is a sub-\Cs{} of
$B$ which contains $1$. For all $a,b\in B$ we have
$$
\| \Psi (a^*b)-\Psi(a)^*\Psi(b)\|^2\leq 
\| \Psi (a^*a)-\Psi(a)^*\Psi(a)\|\cdot \| \Psi (b^*b)-\Psi(b)^*\Psi(b)\|
$$
(because $\Psi$ is unital and $2$-positive).
This shows that  $v\in \mathrm{Mult}(\Psi)$
if and only if $\Psi(v^*v)=\Psi(v)^*\Psi(v)$ and 
$\Psi(vv^*)=\Psi(v)\Psi(v)^*$.
The latter identity follows from the former if $\Psi$ is central and 
$C$ is commutative. 

The restriction of $\Psi$ to ${\mathrm{Mult}(\Psi)}$ defines an injective $^*$-homomorphism ${\mathrm{Mult}(\Psi)} \to C$ because $\Psi$ is faithful.

We show that $\mathrm{Mult}(\Psi)$ is contained in the center 
of $B$. Let $b^*=b\in B$ and let $u$ be a unitary element in $\mathrm{Mult}(\Psi)$. Then
$\Psi(u)^*\Psi(bub)=\Psi(u)^*\Psi(b^2u)=\Psi(b^2)$ because
$\Psi$ is central and
$C$ is commutative. It follows  that
$\Psi((bu-ub)^*(bu-ub))=0$. Thus, $bu=ub$. This shows that $\textrm{Mult}(\Psi)$ is 
contained in the
center of $B$.

Suppose that $\Psi$ maps the closed unit-ball
onto the closed unit ball of $C$.
If $u\in C$ is unitary, then
there exists a contraction $v\in B$ with $\Psi(v)=u$.
It follows that $1=\Psi(v)^*\Psi(v)\leq \Psi(v^*v)\leq 1$,
and $1=\Psi(v)\Psi(v)^*\leq \Psi(vv^*)\leq 1$.
Thus, $v$ belongs to $\textrm{Mult}(\Psi)$.
\end{proof}

\noindent In the proposition below, $\T_\omega \colon A_\omega \to \Cf(bT(A))_\omega$ is the "trace map" defined above Lemma \ref{lem:Phi.on.A.omega}. The multiplicative domain of $\T_\omega$ is denoted by $\mathrm{Mult}(\T_\omega)$. 


\begin{prop}\label{prop:char.partial.T(A).closed}
Let $A$ a unital and separable \Cs{} with 
$T(A)\ne \emptyset$.
Identify 
$\mathrm{Aff}_c(T(A))$ with a unital subspace
of $\Cf(bT(A), \R)=\Cf(bT(A))_{\mathrm{sa}}$, and identify $\C\text{-}\mathrm{Aff}_c(T(A))$ with a unital subspace
of $\Cf(bT(A))$.

The following properties of $A$ are equivalent:
\begin{itemize}
\item[(i)] For each $\tau_0\in bT(A)$ and 
for each non-empty compact set 
$K\subseteq bT(A)\setminus \{ \tau_0 \} $ 
there exists a positive contraction $a\in A$ with $$\tau_0(a)>2/3, \qquad \sup_{\tau\in K} \tau(a) \le 1/3.$$
\item[(ii)]  $\partial T(A)$  is closed in $T(A)$. \vspace{.1cm}
\item[(iii)] 
$\mathrm{Aff}_c(T(A))=\Cf(bT(A))_{\mathrm{sa}}$.  \vspace{.1cm}
\item[(iv)]
$\C\text{-}\mathrm{Aff}_c(T(A))=\Cf(\partial T(A))$.  \vspace{.1cm}
\item[(v)] $\T_\omega$ maps the closed unit ball of $A_\omega$ onto the closed
unit ball of $\Cf(bT(A))_\omega$.  \vspace{.1cm}
\item[(vi)] $J_A \subseteq \mathrm{Mult}(\T_\omega) \subseteq F(A)+J_A$, and $\T_\omega \colon \mathrm{Mult}(\T_\omega) \to  \Cf(bT(A))_\omega$ is a surjective $^*$-homomorphism with kernel equal to $J_A$.
\end{itemize}
\end{prop}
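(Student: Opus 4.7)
The plan is to run the chain (i)$\,\Leftrightarrow\,$(ii)$\,\Leftrightarrow\,$(iii)$\,\Leftrightarrow\,$(iv)$\,\Leftrightarrow\,$(v)$\,\Leftrightarrow\,$(vi). The equivalences (ii)$\,\Leftrightarrow\,$(iii)$\,\Leftrightarrow\,$(iv) reduce to the classical characterization of metrizable Bauer simplices: $\partial T(A)$ is closed in $T(A)$ if and only if $\mathrm{Aff}_c(T(A)) = \Cf(\partial T(A))_{\mathrm{sa}} = \Cf(bT(A))_{\mathrm{sa}}$, cf.\ the discussion preceding Lemma~\ref{lem:NonExtreme} together with \cite[Corollary II.4.2]{Alfsen.boundary.int}. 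Complexifying gives (iv).

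For (i)$\,\Leftrightarrow\,$(ii), I would prove (i)$\,\Rightarrow\,$(ii) by contraposition: if $\tau_0 \in bT(A) \setminus \partial T(A)$, then Lemma~\ref{lem:NonExtreme} supplies a compact $K \subseteq \partial T(A) \setminus \{\tau_0\}$ such that no element $f \in \mathrm{Aff}_c(T(A))$ with $0 \le f \le 1$ and $\sup_K f \le 1/3$ can satisfy $f(\tau_0) \ge 2/3$; applied to $f = \T(a)$ this refutes (i). Conversely, if (ii) holds then $bT(A)$ is compact Hausdorff, so Urysohn's lemma produces $f \in \Cf(bT(A))_{\mathrm{sa}} = \mathrm{Aff}_c(T(A))$ with $f(\tau_0) = 1$, $f|_K = 0$, and $0 \le f \le 1$; Lemma~\ref{lem:Affc(T(A)).quotient.of.Asa}(iv) then lets me approximate $f$ by $\T(a)$ for a positive contraction $a \in A$ to within any tolerance, yielding (i) with room to spare.

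The equivalence (iv)$\,\Leftrightarrow\,$(v) uses Lemma~\ref{lem:averages.of.inner.automorphisms}. If (iv) (equivalently (iii)) holds, then $\C\text{-}\mathrm{Aff}_c(T(A)) = \Cf(bT(A))$, so the ultrapowers coincide and Lemma~\ref{lem:averages.of.inner.automorphisms} directly delivers (v). Conversely, given $f \in \Cf(bT(A))$ of norm at most $1$, represent $f$ by the constant sequence in $\Cf(bT(A))_\omega$ and use (v) to lift it to $a = \pi_\omega(a_1, a_2, \ldots) \in A_\omega$ with $\|a_n\| \le 1$; then $\lim_{n \to \omega} \|\T(a_n) - f\| = 0$ with each $\T(a_n)$ in the closed subspace $\C\text{-}\mathrm{Aff}_c(T(A)) \subseteq \Cf(bT(A))$, so $f \in \C\text{-}\mathrm{Aff}_c(T(A))$. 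This forces $\C\text{-}\mathrm{Aff}_c(T(A)) = \Cf(bT(A))$, giving (iii)$=$(ii), and hence (iv).

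Finally, (v)$\,\Leftrightarrow\,$(vi). The implication (vi)$\,\Rightarrow\,$(v) is immediate because $\mathrm{Mult}(\T_\omega) \subseteq A_\omega$. For (v)$\,\Rightarrow\,$(vi), the Schwarz inequality forces $J_A \subseteq \mathrm{Mult}(\T_\omega)$ and $\T_\omega(J_A) = 0$; consequently the induced map $\bar{\T}_\omega \colon A_\omega/J_A \to \Cf(bT(A))_\omega$ is unital, faithful, central, and surjective on closed unit balls, so Lemma~\ref{lem:Mult.Psi} provides an isomorphism from $\mathrm{Mult}(\bar{\T}_\omega)$ onto $\Cf(bT(A))_\omega$. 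A direct check shows $\pi(\mathrm{Mult}(\T_\omega)) = \mathrm{Mult}(\bar{\T}_\omega)$, whence $\T_\omega|_{\mathrm{Mult}(\T_\omega)}$ is a surjective $^*$-homomorphism onto $\Cf(bT(A))_\omega$ with kernel $J_A$. The main obstacle is the inclusion $\mathrm{Mult}(\T_\omega) \subseteq F(A) + J_A$: for $v \in \mathrm{Mult}(\T_\omega)$ and $a \in A$ one expands $(av-va)^*(av-va)$ and uses centrality of $\T_\omega$ together with the multiplicative-domain identities (exactly as in the proof of Lemma~\ref{lem:Mult.Psi}) to obtain $\T_\omega((av-va)^*(av-va)) = 0$, so $av-va \in J_A$; thus $\pi(v)$ lies in $\pi(A)' \cap (A_\omega/J_A)$, and the $\sigma$-ideal property of $J_A$ combined with Propositions~\ref{prop:sigma.ideals.etc}(iii) and~\ref{rem:local.units.in.JA} identifies this commutant with $\pi(F(A))$, producing $v' \in F(A)$ with $v - v' \in J_A$. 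This last step, which crucially invokes the $\sigma$-ideal structure of $J_A$, is the heart of the proposition.
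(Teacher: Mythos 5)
Your proposal is correct, and its core coincides with the paper's argument: (i)$\Rightarrow$(ii) by contraposition via Lemma~\ref{lem:NonExtreme}, the classical Bauer--simplex facts for (ii)$\Leftrightarrow$(iii)$\Leftrightarrow$(iv), unit-ball surjectivity of $\T_\omega$ (Lemma~\ref{lem:Phi.on.A.omega}, or your stronger Lemma~\ref{lem:averages.of.inner.automorphisms}) for (iv)$\Rightarrow$(v), and, for the substantial step (v)$\Rightarrow$(vi), exactly the paper's combination of Lemma~\ref{lem:Mult.Psi} applied to the faithful factorization $\Psi\colon A_\omega/J_A\to\Cf(bT(A))_\omega$ together with the $\sigma$-ideal property of $J_A$ (Propositions~\ref{rem:local.units.in.JA} and~\ref{prop:sigma.ideals.etc}(iii)) to identify $\pi_{J_A}(A)'\cap(A_\omega/J_A)$ with $\pi_{J_A}(F(A))$; your direct expansion of $\T_\omega\big((av-va)^*(av-va)\big)=0$ is just the proof of Lemma~\ref{lem:Mult.Psi} rerun without faithfulness, and it does check out. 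The only structural difference is how the loop is closed: the paper runs the one-way cycle (i)$\Rightarrow$(ii)$\Leftrightarrow$(iii)$\Leftrightarrow$(iv)$\Rightarrow$(v)$\Rightarrow$(vi)$\Rightarrow$(i), obtaining (vi)$\Rightarrow$(i) by lifting a Urysohn function through $\mathrm{Mult}(\T_\omega)$ to a positive contraction in $A$, whereas you prove each equivalence in both directions, using the density statement of Lemma~\ref{lem:Affc(T(A)).quotient.of.Asa}(iv) for (ii)$\Rightarrow$(i), the closedness of $\C\text{-}\mathrm{Aff}_c(T(A))$ inside $\Cf(bT(A))$ for (v)$\Rightarrow$(iv), and the fact that a $^*$-epimorphism of \Cs s maps closed unit balls onto closed unit balls for (vi)$\Rightarrow$(v). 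These extra arrows are elementary and correct, so the two proofs are interchangeable: the paper's cycle is marginally more economical, while your arrangement avoids the (vi)$\Rightarrow$(i) lifting argument and makes each individual equivalence self-contained.
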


\begin{proof} The equivalences of  (ii), (iii), and (iv) have been discussed (for general Choquet simplexes $S$) above Lemma \ref{lem:NonExtreme}.  Lemma \ref{lem:NonExtreme} shows that (i) implies (ii), and Lemma \ref{lem:Phi.on.A.omega} shows that (iv) implies (v). 

(v) $\Rightarrow$ (vi). The closed two-sided ideal $J_A$ of $A_\omega$ consists of those  elements $a \in A_\omega$ for which $\T_\omega(a^*a)=0$. Therefore $\T_\omega$ factors through a faithful unital completely positive map $\Psi \colon A_\omega/J_A \to C(bT(A))_\omega$. If (v) holds, then Lemma \ref{lem:Mult.Psi} shows that $\mathrm{Mult}(\Psi)$ is contained in the center of $A_\omega/J_A$, and that $\Psi$ maps $\mathrm{Mult}(\Psi)$ onto  $\Cf(bT(A))_\omega$. The center of $A_\omega/J_A$ is contained in $F(A)/(F(A) \cap J_A) = (F(A)+J_A)/J_A$. 
The multiplicative domain of $\T_\omega$ is the preimage of $\mathrm{Mult}(\Psi)$ under the quotient mapping $A_\omega \to A_\omega/J_A$. Hence $\mathrm{Mult}(\T_\omega)$ is contained in $F(A) + J_A$, 
and $\T_\omega(\mathrm{Mult}(\T_\omega)) = \Psi(\mathrm{Mult}(\Psi)) = \Cf(bT(A))_\omega$.

(vi) $\Rightarrow$ (i).
Let $\tau_0\in bT(A)$ and $\emptyset \ne K\subseteq bT(A)$ be
a compact subset that does not contain $\tau_0$.
Then there is a continuous function $f\colon bT(A)\to [0,1]$
with $f(\tau_0)=1$ and $f(K)=\{ 0\}$ (by Tietze extension theorem).
By (vi) there is
a positive contraction $b:= \pi_\omega(b_1,b_2,\ldots)\in  \mathrm{Mult}(\T_\omega)$
with $\T_\omega(b)=f$. We can choose the lift $(b_n)_{n\ge 1}$ of $b$ to consist of positive contractions.
Now, $\lim_\omega \| \T(b_n) -f \|=0$, so
there exists $n\in \N$ such that
$$\sup_{\tau\in bT(A)} | f(\tau) -\tau(b_n)| <1/3.$$
The element $a = b_n$ is then a positive contraction satisfying $\tau_0(a)>2/3$ and $\sup_{\tau\in K} \tau(a) \le 1/3$.
\end{proof}

\begin{cor}\label{cor:char.partial.T(A).closed}
Let $A$ a unital and separable \Cs{} with 
$T(A)\ne \emptyset$. The following properties of $A$ are equivalent:
\begin{itemize}
\item[(i)]  $\partial T(A)$ is closed
in $T(A)$. \vspace{.1cm}
\item[(ii)] $\T_\omega \colon F(A) \to \Cf(bT(A))_\omega$
maps the closed unit ball of $F(A)$ onto the closed
unit ball of $\Cf(bT(A))_\omega$.  \vspace{.1cm}
\item[(iii)] For every separable unital sub-\Cs{}
$\cC$ of $C(bT(A))$ there exists
a sequence of unital completely positive maps  $V_n\colon \cC\to A$
such that, for all $c\in \cC$, 
$$\lim_{n\to \infty} \sup_{\tau\in \partial T(A)} 
\tau\big(V_n(c^*c)-V_n(c)^*V_n(c)\big) =0,  \qquad
\lim_{n\to \infty} \T(V_n(c)) =c.$$  
\item[(iv)]  
For every separable unital sub-\Cs s
$\cC$ of $\Cf(bT(A))$ there exists
a sequence of unital completely positive maps  $V_n\colon \cC\to A$
such that, for all $ a\in A$  and $c\in \cC$, 
$$\lim_{n\to \infty}\, \sup_{\tau\in \partial T(A)} 
\tau\big(V_n(c^*c)-V_n(c)^*V_n(c)\big) =0,\qquad
\lim_{n\to \infty} \T(V_n(c))=c,$$
and 
$$ \lim_{n\to \infty} \| [a, V_n(c) \| =0.$$
%
\end{itemize}
\end{cor}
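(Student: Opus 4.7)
The plan is to establish the cycle (i) $\Rightarrow$ (iv) $\Rightarrow$ (iii) $\Rightarrow$ (ii) $\Rightarrow$ (i). For (i) $\Leftrightarrow$ (ii), by Proposition \ref{prop:char.partial.T(A).closed} condition (i) is equivalent to unit-ball surjectivity of $\T_\omega$ from $A_\omega$ onto $\Cf(bT(A))_\omega$ (items (ii) and (v) in that proposition). Lemma \ref{lem:averages.of.inner.automorphisms} asserts that $\T_\omega$ already maps the closed unit ball of $F(A)$ onto the closed unit ball of $\C\text{-}\mathrm{Aff}_c(T(A))_\omega$; under (i) the spaces $\Cf(bT(A))$ and $\C\text{-}\mathrm{Aff}_c(T(A))$ coincide, which gives both directions.

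The main work is (i) $\Rightarrow$ (iv). Fix a separable unital sub-\Cs{} $\cC\subseteq \Cf(bT(A))$. Under (i), Proposition \ref{prop:char.partial.T(A).closed}(vi) yields a $^*$-epimorphism $\mathrm{Mult}(\T_\omega) \to \Cf(bT(A))_\omega$ with kernel $J_A$ and $\mathrm{Mult}(\T_\omega)\subseteq F(A)+J_A$; combined with Proposition \ref{prop:sigma.ideals.etc}(iii) applied with $B=A_\omega$, $J=J_A$, $C=A$, the commutant $\pi_{J_A}(A)'\cap(A_\omega/J_A)$ equals $F(A)/(F(A)\cap J_A)$ and the inclusion $\cC \hookrightarrow \Cf(bT(A))_\omega$ factors through it. Proposition \ref{prop:sigma.ideals.etc}(ii) then produces a \sh{} $\psi\colon \Cf_0((0,1],\cC)\to F(A)$ lifting this inclusion; the associated c.p.\ contractive order-zero map $\varphi(c):=\psi(\iota\otimes c)$ from $\cC$ into $F(A)$ satisfies $\T_\omega(\varphi(c))=c$. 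The positive contraction $h:=\varphi(1)\in F(A)$ has $\T_\omega(1-h)=0$ with $1-h\ge0$, hence $1-h\in F(A)\cap J_A$. Pick any state $\lambda$ on $\cC$ and set $V(c):=\varphi(c)+\lambda(c)(1-h)$; then $V\colon\cC\to F(A)$ is UCP with $\T_\omega(V(c))=c$, and the order-zero identity $\varphi(c^*c)\,\varphi(1)=\varphi(c)^*\varphi(c)$ together with $1-h\in J_A$ gives $V(c^*c)-V(c)^*V(c)\in F(A)\cap J_A$.

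To represent $V$ as a sequence of UCP maps $V_n\colon\cC\to A$, nuclearity of $\cC$ and the Choi--Effros lifting theorem (applied to the quotient $\ell^\infty(A)\to A_\omega$) produce UCP maps $V_n\colon\cC\to A$ with $\pi_\omega((V_n(c)))=V(c)$ for every $c\in\cC$. Then the three facts $\T_\omega(V(c))=c$, $V(c)\in A'\cap A_\omega$, and $V(c^*c)-V(c)^*V(c)\in J_A$ translate, respectively, into $\lim_\omega\|\T(V_n(c))-c\|=0$, $\lim_\omega\|[V_n(c),a]\|=0$ for every $a\in A$, and $\lim_\omega\sup_{\tau\in\partial T(A)}\tau(V_n(c^*c)-V_n(c)^*V_n(c))=0$, for all $c\in\cC$. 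A diagonal application of Lemma \ref{lem:omega.selection} to countable dense sequences in $\cC$ and $A$ upgrades these $\lim_\omega$-statements to $\lim_n$-statements for a suitable reindexed sequence, yielding (iv).

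The implication (iv) $\Rightarrow$ (iii) is immediate. For (iii) $\Rightarrow$ (ii) I verify condition (v) of Proposition \ref{prop:char.partial.T(A).closed}: given a contraction $f=\pi_\omega((f_n))\in\Cf(bT(A))_\omega$, choose a separable unital $\cC\subseteq\Cf(bT(A))$ containing every $f_n$, apply (iii) to obtain UCP $V_m\colon\cC\to A$ with $\T(V_m(c))\to c$ as $m\to\infty$ for each $c\in \cC$, and for each $n$ pick $m(n)$ so that $\|\T(V_{m(n)}(f_n))-f_n\|<1/n$; then $a_n:=V_{m(n)}(f_n)$ is a contraction in $A$ with $\pi_\omega((a_n))$ mapping to $f$ under $\T_\omega$. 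This yields condition (v) of the proposition, hence (i), and therefore (ii) by the first paragraph. The main obstacle is the diagonal extraction via the $\epsilon$-test at the end of the (i) $\Rightarrow$ (iv) argument: one must convert ultrafilter limits into sequential limits while simultaneously preserving asymptotic centrality, asymptotic multiplicativity in the tracial norm, and the $\T$-recovery condition; everything else is bookkeeping on top of the structure theorem Proposition \ref{prop:char.partial.T(A).closed} and the $\sigma$-ideal machinery.
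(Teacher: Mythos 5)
Your proposal is correct and follows essentially the same route as the paper: the equivalence of (i) and (ii) via Proposition \ref{prop:char.partial.T(A).closed} together with Lemma \ref{lem:averages.of.inner.automorphisms}, the trivial implication (iv) $\Rightarrow$ (iii), the passage (iii) $\Rightarrow$ Proposition \ref{prop:char.partial.T(A).closed}(v) $\Rightarrow$ (i), and the main step deducing (iv) from part (vi) of that proposition by lifting $\Psi^{-1}|_\cC$ first into $F(A)$ and then through $\ell^\infty(A) \to A_\omega$ to a sequence of unital completely positive maps $V_n \colon \cC \to A$. The only (harmless) variation is that for the intermediate lift you use the $\sigma$-ideal/order-zero lifting of Proposition \ref{prop:sigma.ideals.etc}(ii) plus a unitalization with a state $\lambda$ and the correction term $\lambda(c)(1-h)$, where the paper simply invokes nuclearity of $\cC$ to get a unital completely positive lift directly; both yield the same conclusion, and your final conversion of the $\lim_\omega$-statements into genuine sequential limits is the same bookkeeping the paper leaves implicit.
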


\begin{proof}
The equivalence of (i) and (ii) follows from 
Proposition \ref{prop:char.partial.T(A).closed}.
Part (iv) clearly implies (iii); and  (iii) implies
Proposition \ref{prop:char.partial.T(A).closed} (v), which in turns implies part (i) of the present corollary.
It suffices to show that
Proposition \ref{prop:char.partial.T(A).closed} (vi) implies (iv). To do this we show that Proposition \ref{prop:char.partial.T(A).closed} (vi) 
implies that, for every compact subset $\Omega \subset A$,
every finite subset $X\subset \cC$, and every $\varepsilon >0$,
there exists a  unital completely positive map $V\colon \cC\to A$ with
$$\sup_{\tau\in \partial T(A)} 
\tau\big(V(c^*c)-V(c)^*V(c)\big) <\varepsilon, \qquad
\| \T(V(c))-c\| <\varepsilon, \qquad \| [a, V(c) \| <\varepsilon,$$  
for all $a\in \Omega$ and $c\in X$. 
 
Let $\Psi \colon \mathrm{Mult}(\T_\omega)/J_A\to \Cf(bT(A))_\omega$ be the factorization of $\T_\omega \colon  \mathrm{Mult}(\T_\omega) \to \Cf(bT(A))_\omega$  (as in the proof of Proposition \ref{prop:char.partial.T(A).closed}), and consider the $^*$-homomorphism $\Psi^{-1}|_\cC \colon \cC \to F(A)/(J_A\cap F(A))$. Take a unital completely positive lift $W\colon \cC\to F(A)$ of $\Psi^{-1}|_\cC$ (using that $\cC$ is nuclear).  This map can be further lifted to a unital completely positive map 
$$\widetilde{W} \colon \cC\to \pi_\omega^{-1}(F(A))\subset \ell^\infty(A).$$
Now $\widetilde{W}$ is given by a sequence of
unital completely positive maps $V_n\colon \cC\to \ell_\infty (A)$. For some integer $n \ge 1$, the map $V:= V_{n}$ has the desired properties.
\end{proof}


\begin{cor}\label{cor:partial.T(A).closed}
Let $A$ a separable, simple, and unital  \Cs{} with 
$T(A)\ne \emptyset$. Then $\partial T(A)$ is closed in $T(A)$
if and only if the following holds:

For any partition of the unit $(f^{(k)}_{i})_{1 \le k \le m, \; 1 \le i \le \mu(k)}$ of $\Cf(bT(A))$, 
with  $f^{(k)}_{i} f^{(k)}_{j}=0$
for all $k$ and for all $i\not=j$,  for every $\varepsilon >0$,
and for every compact subset $\Omega$ of $A$, there exist positive 
contractions $(a^{(k)}_i)_{1 \le k \le m, \; 1 \le i \le \mu(k)}$ in $A$ that satisfy the following conditions:
\begin{itemize}
\item[(i)] $a^{(k)}_i\, a^{(k)}_j=0$ for all $k$ and all $i,j$ with $i \ne j$. \vspace{.1cm}
\item[(ii)]  $\| [b,  a^{(k)}_j ] \| < \varepsilon$ for 
all $k,j$ and for all $b\in \Omega$. \vspace{.1cm}
\item[(iii)]  $| f^{(k)}_{j}(\tau)^{1/2} - \tau(a^{(k)}_{j})|\,  +\,
 | f^{(k)}_{j}(\tau) - \tau((a^{(k)}_{j})^2)| < \varepsilon$  
 for all $k,j$ and all $\tau \in bT(A)$.\vspace{.1cm}
\item[(iv)] $\sum_{k}  (a^{(k)})^2 \leq  1+\varepsilon$
where $a^{(k)}= a^{(k)}_1+\cdots +  a^{(k)}_{\mu(k)}$. \vspace{.1cm}
\item[(v)] $\| [ a^{(k)}_i, a^{(\ell)}_j ] \| <\varepsilon$ for all $k,\ell,i,j$.
\end{itemize}
\end{cor}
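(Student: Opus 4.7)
The plan is to prove the two implications separately; the forward direction (closed $\partial T(A)$ implies the approximation property) carries the content.

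For the forward direction, I would first invoke Proposition~\ref{prop:char.partial.T(A).closed}~(vi) to obtain the unital $^*$-isomorphism $\Psi\colon\mathrm{Mult}(\T_\omega)/J_A\to\Cf(bT(A))_\omega$.  Restricting $\Psi^{-1}$ to the constants $\Cf(bT(A))\subseteq\Cf(bT(A))_\omega$ and composing with the natural inclusion $\mathrm{Mult}(\T_\omega)/J_A\hookrightarrow F(A)/(J_A\cap F(A))$ (valid because $\mathrm{Mult}(\T_\omega)\subseteq F(A)+J_A$) yields a unital $^*$-homomorphism
$$\Phi\colon\Cf(bT(A))\to F(A)/(J_A\cap F(A)).$$
Since $J_A\cap F(A)$ is a $\sigma$-ideal of $F(A)$ by Proposition~\ref{rem:local.units.in.JA}, Proposition~\ref{prop:sigma.ideals.etc}~(ii), applied with $B=F(A)$, $J=J_A\cap F(A)$, $C=\C\cdot 1$, and $D=\Cf(bT(A))$, lifts $\Phi$ to a $^*$-homomorphism $\psi\colon\Cf_0((0,1],\Cf(bT(A)))\to F(A)$ with $\pi_{J_A}(\psi(\iota\otimes g))=\Phi(g)$.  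Setting
$$b_i^{(k)}\;:=\;\psi\bigl(\iota^{1/2}\otimes\sqrt{f_i^{(k)}}\,\bigr)\in F(A),$$
the facts that $\psi$ is a $^*$-homomorphism from a commutative algebra, that $\sqrt{f_i^{(k)}}\sqrt{f_j^{(k)}}=0$ for $i\ne j$, and that $\bigl(\iota^{1/2}\otimes\sqrt{f_i^{(k)}}\bigr)^2=\iota\otimes f_i^{(k)}$ imply that the family $(b_i^{(k)})$ is pairwise commuting, within each $k$ exactly pairwise orthogonal, and
$$(b_i^{(k)})^2=\psi(\iota\otimes f_i^{(k)}),\qquad \sum_{k,i}(b_i^{(k)})^2=\psi(\iota\otimes 1)\le 1.$$
From $\pi_{J_A}(b_i^{(k)})=\Phi(\sqrt{f_i^{(k)}})$ I then read $\T_\omega(b_i^{(k)})=\sqrt{f_i^{(k)}}$ and $\T_\omega((b_i^{(k)})^2)=f_i^{(k)}$ in $\Cf(bT(A))\subseteq\Cf(bT(A))_\omega$.

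To transfer to $A$, for each fixed $k$ I lift $(b_i^{(k)})_{i=1}^{\mu(k)}\subset A_\omega$ to pairwise orthogonal positive contractions $(a_{i,n}^{(k)})_{i,n}$ in $\ell^\infty(A)$ via the standard lifting of pairwise orthogonal positives through $\pi_\omega$ (the well-known fact noted in the paper just before Lemma~\ref{lem:from.weak.comparison.to.estimate}), which delivers (i) at every $n$.  Conditions (ii)--(v) then translate into finitely many numerical tests on the chosen lift with vanishing $\omega$-limits: (ii) from $b_i^{(k)}\in F(A)$; (v) from the exact commutativity of $b_i^{(k)}$ with $b_j^{(\ell)}$; (iii) from $\|\T(a_{i,n}^{(k)})-\sqrt{f_i^{(k)}}\|_{\Cf(bT(A))}\to 0$ and $\|\T((a_{i,n}^{(k)})^2)-f_i^{(k)}\|_{\Cf(bT(A))}\to 0$; and (iv) from $\sum_{k,i}(b_i^{(k)})^2\le 1$ combined with the already-ensured orthogonality within each $k$.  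After approximating the compact set $\Omega$ by a finite $\varepsilon$-net, the joint condition becomes finite, so the set of $n$ meeting all bounds belongs to $\omega$ and is nonempty; any such $n$ provides $a_i^{(k)}:=a_{i,n}^{(k)}$.

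For the converse, by Proposition~\ref{prop:char.partial.T(A).closed} it suffices to verify its condition (i).  Given $\tau_0\in bT(A)$ and a compact $K\subseteq bT(A)\setminus\{\tau_0\}$, Tietze produces $g\in\Cf(bT(A),[0,1])$ with $g(\tau_0)=1$ and $g|_K=0$; applying the approximation hypothesis with $m=2$, $\mu(1)=\mu(2)=1$, $f_1^{(1)}=g$, $f_1^{(2)}=1-g$ (intra-$k$ orthogonality vacuous when $\mu(k)=1$), $\Omega=\emptyset$, and $\varepsilon<1/3$, condition (iii) yields $a:=a_1^{(1)}$ with $\tau_0(a)>1-\varepsilon>2/3$ and $\sup_{\tau\in K}\tau(a)<\varepsilon\le 1/3$.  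The delicate step — and the one shaping the whole strategy — is reconciling the \emph{exact} orthogonality (i) with the approximate conditions (ii)--(v) at a single index $n$; a direct use of the u.c.p.\ approximants of Corollary~\ref{cor:char.partial.T(A).closed} would give only approximate orthogonality in norm, and the route above sidesteps this precisely by realising $\sqrt{f_i^{(k)}}$ as the image of an \emph{honest} $^*$-homomorphism from a commutative cone algebra into $F(A)$, whose exact algebraic relations lift intact to a single $n$.
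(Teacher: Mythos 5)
Your proposal is correct and takes essentially the same route as the paper: invoke Proposition~\ref{prop:char.partial.T(A).closed}~(vi), lift the resulting embedding $\Cf(bT(A))\to F(A)/(J_A\cap F(A))$ through the $\sigma$-ideal to a $^*$-homomorphism on the cone $\Cf_0((0,1],\Cf(bT(A)))$ into $F(A)$, lift the exactly orthogonal/commuting elements, and select a single index $n$ with the ultrafilter; the converse via Proposition~\ref{prop:char.partial.T(A).closed}~(i) is also the paper's argument. Your minor deviations are if anything improvements: the normalization $\psi(\iota^{1/2}\otimes\sqrt{f^{(k)}_i})$ gives exactly $\T_\omega(b^{(k)}_i)=\sqrt{f^{(k)}_i}$ and $\T_\omega((b^{(k)}_i)^2)=f^{(k)}_i$ as condition (iii) requires (the paper's literal choice $\varphi(\iota\otimes f^{(k)}_i)$ would produce $\tau(a)\approx f^{(k)}_i(\tau)$ instead), and your converse with $m=2$, $\mu(1)=\mu(2)=1$ avoids the intra-group orthogonality problem in the paper's $\{f,1-f\}$, $m=1$, $\mu(1)=2$ phrasing.
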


\begin{proof}  Property (iii) (applied to the partition of the unit $\{f,1-f\}$, with $m=1$ and $\mu(1)=2$, and where $f$ a positive contraction in $\Cf(bT(a))$ such that $f(\tau_0)=0$ and $f(K) = \{0\}$)
implies property (i) of Proposition \ref{prop:char.partial.T(A).closed}, and hence it implies $bT(A)=\partial T(A)$. This proves the "only if" part of the corollary. We proceed to prove the "if" part.

Suppose that  $\partial T(A)=bT(A)$ (i.e., that $\partial T(A)$ is closed). 
We shall use Proposition \ref{prop:char.partial.T(A).closed} (vi) (and its proof) to prove the existence of the positive contractions $a^{(k)}_j$. Let $\cC \subseteq \Cf(bT(A))$ be the separable unital sub-\Cs{} generated by the functions $(f_i^{(k)})$. As in the proof of  Proposition \ref{prop:char.partial.T(A).closed} (vi), let $\Psi \colon A_\omega/J_A \to \Cf(bT(A))_\omega$ be the (faithful) factorization of the map $\T_\omega \colon A_\omega \to \Cf(bT(A))_\omega$. We have a $^*$-isomorphism $\Psi \colon \mathrm{Mult}(\Psi) \to \Cf(bT(A))_\omega$, and $ \mathrm{Mult}(\Psi)  \subseteq F(A)/(F(A) \cap J_A)$. The inverse of this map, restricted to $\cC$, gives us an injective $^*$-homomorphism $\Phi \colon \cC \to F(A)/(F(A) \cap J_A)$. 
Since $J_A$ is a $\sigma$-ideal, we can apply \cite[Proposition 1.6]{Kir.AbelProc} to obtain an injective
$^*$-homomorphism $\varphi \colon \Cf_0((0,1], \cC)\to  F(A)$
with $\pi_{J_A} (\varphi (\iota\otimes c))= \Phi(c)$
for all  $c\in \cC$, where $\iota \in \Cf_0((0,1])$ as usual is given by $\iota(t) = t$.

Set $g^{(k)}_i =\varphi( \iota \otimes f^{(k)}_{i}) \in F(A)$. For each fixed $k$, we can lift the mutually orthogonal positive contractions $g^{(k)}_i$ to  mutually orthogonal positive contractions $b^{(k)}_i \in \ell_\infty(A)$, $1 \le i \le \mu(k)$. Let $P_n \colon \ell^\infty(A) \to A$ be the projection onto the $n$th copy of $A$. For a given $\varepsilon > 0$, the set $X \subseteq \N$ consisting of all $n \in \N$ for which 
the system, $a^{(k)}_i:= P_n(b^{(k)}_i)$, satisfies conditions (i)--(v) above,
is contained in the ultrafilter $\omega$. In particular, 
$X \ne \emptyset$, which completes the proof.
\end{proof}

\begin{rem}\label{rem:omega.characters}
Let $A$ be a unital \Cs. Put $X = bT(A)$. The Gelfand space of characters on $C(X)_\infty$  is the corona space
$$\gamma(X \times \N ):= \beta(X \times \N )\, \setminus\,  (X \times \N),  $$
where $\beta(X\times \N)$ denotes the Stone-Cech compactification of $X \times \N$. 

The Gelfand space of $\Cf(X)_\omega$ 
is a subspace of $\gamma(X \times \N )$ obtained as follows. The map $\pi_2\colon (x,n)\in X\times \N\mapsto n\in \N$ is 
continuous, open  and surjective.
Thus, it extends to a surjective continuous map $\beta(\pi_2)$ from
$\beta(X \times \N )$ onto $\beta (\N)$.
One can see that $\beta(\pi_2)^{-1}(\N)=X \times \N $,
because $X$ is compact.

It follows that $\beta(\pi_2)$ defines a surjective map
$\gamma(\pi_2)$ from $\gamma(X\times \N)$ onto
$\gamma(\N)$. 
The points of $\gamma(\N)$ are in natural bijective correspondence
to the \emph{free} ultra-filters on $\N$.  

The Gelfand space of $\Cf(X)_\omega$
is the space $Y:= \gamma(\pi_2)^{-1}(\omega)\subseteq \gamma(X\times \N)$.

Proposition \ref{prop:char.partial.T(A).closed} (vi)
and Lemma \ref{lem:Mult.Psi}
show that
the \Cs{} $A_\omega/J_A$ contains a copy of
$\Cf (bT(A))_\omega$ as a sub-\Cs{} of its center if $\partial T(A)=bT(A)$.

Thus $F(A)/(J_A\cap F(A))$ is a $\Cf(Y)$-\Cs. It would be interesting to know more about its fibers.
\end{rem}

\medskip

\section{The proof of the main result}
\label{sec:partial.T(A).fin.dim}

\noindent
In this section we prove our main result, Theorem \ref{thm:(SI).implies.Z-absorbtion}.  As in the previous sections, $bT(A)$ denotes the weak$^*$ closure of $\partial T(A)$ in $T(A)\subset A^*$, whenever $A$ is a unital \Cs{} (with $T(A) \ne \emptyset$). 

\begin{definition}\label{def:covering.system}
Let  $A$ be a unital \Cs, let $\Omega \subset A$ be a compact subset, and let $\varepsilon>0$. An \emph{$(\varepsilon, \Omega)$-commuting  covering system}  $(\mathcal{U}; \mathcal{V}^{(1)},\ldots, \mathcal{V}^{(m)})$ of size $m$ consists of the following ingredients:
\begin{itemize}
\item[(i)]
an open covering $\mathcal{U}$ of $bT(A)$, and \vspace{.1cm}
\item[(ii)] finite collections
$$\mathcal{V}^{(\ell)}=  \{\, V^{(\ell)}_1, V^{(\ell)}_2, \ldots, V^{(\ell)}_{\mu(\ell)}\,\}
$$
of unital completely positive maps  $V^{(\ell)}_j \colon M_2\to A$, $1 \le \ell \le m$ and $1 \le j \le \mu(\ell)$. 
\end{itemize}
The following properties must be satisfied:
\begin{itemize}
\item[(iii)]  $\| [ a, V^{(\ell)}_j(b)] \| <\varepsilon $ for all $a\in \Omega$, all contractions $b\in M_2$, and all $\ell$ and $j$.  \vspace{.1cm}
\item[(iv)] $\| [ V^{(\ell)}_j(b), V^{(k)}_i(c)] \| <\varepsilon $ for $\ell\not=k$, for all $i,j$, and for all
contractions $b,c\in M_2$.  \vspace{.1cm}
\item[(v)]  For each open set $U\in\mathcal{U}$ and each $\ell\in \{ 1,\ldots, m\}$
there exists $j\in \{ 1,\ldots, \mu(\ell) \}$ such that
$$\tau\big( V^{(\ell)}_j(b^*b)-  V^{(\ell)}_j(b)^*V^{(\ell)}_j(b)\big) < \varepsilon $$
for all contractions $b\in M_2$ and all $\tau \in U$.
\end{itemize}
\end{definition}

\begin{rem}\label{rem:refinement}
If $(\mathcal{U}; \mathcal{V}^{(1)},\ldots, \mathcal{V}^{(m)})$
is an $(\varepsilon, \Omega)$-commuting
covering systems of size $m$, and if
$\mathcal{U}'$ is a refinement of the covering $\mathcal{U}$,
then
$(\mathcal{U}'; \mathcal{V}^{(1)},\ldots, \mathcal{V}^{(m)})$
is again an  $(\varepsilon, \Omega)$-commuting
covering systems of size $m$.
\end{rem}

\noindent
Let $N$ be a $W^*$-algebra with separable predual, and let  $\tau$ a faithful
normal tracial state on $N$. As before, we let $N^\omega$ denote the $W^*$-algebra 
$\ell_\infty(N)/c_{\omega,\tau}(N)$, where 
$c_{\omega,\tau}(N)$
consists of the bounded sequences $(a_1,a_2,\cdots )$ with
$\lim_\omega \|a_n\|_{2,\tau}=0$. Recall further that for each $\tau \in T(A)$ we have a semi-norm $\| \; \cdot \; \|_{2,\tau}$ on $A_\omega$, and that $J_\tau$ is the closed two-sided ideal of $A_\omega$ consisting of those $a \in A_\omega$ for which $\|a\|_{2,\tau} = 0$. 

\begin{lemma}\label{lem:F(A).onto.Nc}
Let $A$ be a unital separable \Cs. Let $\tau\in T(A)$, and let $N_\tau = \rho_\tau(A)''$, where $\rho_\tau$ is the GNS representation of $A$ associated with $\tau$. If $N_\tau '\cap (N_\tau)^\omega$ contains a unital
copy of $M_2$, then there exists a sequence of unital completely positive maps
$V_n\colon M_2\to A$ such that,
$$
\lim_\omega \sup_{b\in (M_2)_1} 
\tau \big(V_n(b^*b)-V_n(b)^*V_n(b)\big) =0, \qquad \lim_\omega \; \sup_{a\in \Omega} \| [a , V_n(b)]  \| =0
$$
for each compact subset $\Omega$ of $A$, where $(M_2)_1$ denotes the unit ball of $M_2$.
\end{lemma}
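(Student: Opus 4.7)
The plan is to lift the given unital embedding $\psi\colon M_2 \to N_\tau' \cap N_\tau^\omega$ back through the canonical surjections $\ell^\infty(A) \twoheadrightarrow A_\omega$ and $F(A) \twoheadrightarrow N_\tau' \cap N_\tau^\omega$, using the Choi--Effros lifting theorem at each stage. Theorem \ref{thm:M-omega} supplies the surjectivity of the latter map and is the main ingredient.

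Concretely, let $\Phi\colon F(A) \to N_\tau' \cap N_\tau^\omega$ denote the surjective $^*$-homomorphism furnished by Theorem \ref{thm:M-omega}; its kernel is $F(A) \cap J_\tau$. Since $M_2$ is nuclear, the Choi--Effros lifting theorem produces a unital completely positive map $\tilde{\psi}\colon M_2 \to F(A)$ with $\Phi \circ \tilde{\psi} = \psi$. A second application of Choi--Effros to the quotient $\pi_\omega \colon \ell^\infty(A) \to A_\omega$ and the ucp map $\tilde{\psi} \colon M_2 \to A_\omega$ then produces a ucp lift $\widehat{\psi}\colon M_2 \to \ell^\infty(A)$. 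Setting $V_n \colon M_2 \to A$ to be the $n$th coordinate map of $\widehat{\psi}$, each $V_n$ is ucp and $\tilde{\psi}(b) = \pi_\omega\bigl(V_1(b), V_2(b), \dots\bigr)$ for every $b \in M_2$.

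Both convergence properties follow immediately from the construction. Since the range of $\tilde{\psi}$ lies in $A' \cap A_\omega$, one has $\lim_\omega \|[V_n(b), a]\| = 0$ for each $b \in M_2$ and $a \in A$. Since $\Phi \circ \tilde{\psi} = \psi$ is a $^*$-homomorphism, the positive element $\sigma(b) := \tilde{\psi}(b^*b) - \tilde{\psi}(b)^*\tilde{\psi}(b) \in A_\omega$ lies in $\ker \Phi = J_\tau$. By the definition of $J_\tau$ together with the bound $\tau(x) \le \|x\|_{2,\tau}$ valid for $x \in A_+$, this gives $\lim_\omega \tau\bigl(V_n(b^*b) - V_n(b)^*V_n(b)\bigr) = 0$ for each $b \in M_2$.

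To pass from pointwise to uniform convergence I exploit the finite-dimensionality of $M_2$. Writing $b = \sum b_{ij} e_{ij}$ in the standard matrix units, one has $\|[V_n(b), a]\| \le 4 \max_{i,j} \|[V_n(e_{ij}), a]\|$ whenever $\|b\| \le 1$, and a finite $\varepsilon$-net in $\Omega$ combined with $\|V_n(e_{ij})\| \le 1$ delivers uniformity over a compact $\Omega \subset A$. For the trace estimate, the map $Q_n(b) := \tau\bigl(V_n(b^*b) - V_n(b)^*V_n(b)\bigr)$ is a positive semi-definite Hermitian quadratic form on $M_2 \cong \C^4$ (by the Kadison--Schwarz inequality and the polarization identity), so Cauchy--Schwarz applied to the associated sesquilinear form yields $\sup_{\|b\| \le 1} Q_n(b) \le 16 \max_{i,j} Q_n(e_{ij})$, reducing uniformity in $b$ to the four matrix units.

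The main obstacle already sits behind us in Theorem \ref{thm:M-omega}: that is the only nontrivial input. Once the surjection $F(A) \to N_\tau' \cap N_\tau^\omega$ is in hand, the rest is a transparent application of Choi--Effros lifting and elementary finite-dimensional estimates.
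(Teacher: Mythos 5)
Your proposal is correct and follows essentially the same route as the paper: lift the unital embedding $M_2 \to N_\tau'\cap N_\tau^\omega$ through the surjection of Theorem \ref{thm:M-omega} to a ucp map into $F(A)$, lift again to $\ell^\infty(A)$, and take coordinate maps $V_n$. The only difference is that you spell out the final verification (uniformity over the unit ball of $M_2$ via matrix units and the positive semi-definite form $Q_n$, and over $\Omega$ via a finite net), which the paper leaves as ``straightforward to check.''
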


\begin{proof}  It follows from Theorem \ref{thm:M-omega} that there is a surjective \sh{} $\psi \colon F(A) \to N_\tau '\cap (N_\tau)^\omega$ whose kernel is $J_\tau$. We can therefore lift the given unital \sh{} $M_2 \to N_\tau '\cap (N_\tau)^\omega$ to a unital completely positive map $\psi \colon M_2 \to F(A)$. The map $\psi$ lifts further to a unital completely positive map $V = (V_1,V_2, \dots ) \colon M_2 \to \ell^\infty(A)$. It is straightforward to check that the sequence of unital completely positive maps $V_n \colon M_2 \to A$ has the desired properties.
\end{proof}

\begin{prop}\label{prop:covering.system}
Let $A$ be a unital separable \Cs{} with $T(A) \ne \emptyset$. Suppose that, for each $\tau\in bT(A)$,
the corresponding $W^*$-algebra $N=\rho_\tau(A)''$ is McDuff, i.e., $N \, \overline{\otimes} \, \cR \cong N$. Let $m\in \N$. For each compact subset  $\Omega$ of $A$ and for each $\varepsilon>0$ there exists
an  $(\varepsilon, \Omega)$-commuting  covering system 
$(\mathcal{U}\,;\,\, \mathcal{V}^{(1)},\ldots, \mathcal{V}^{(m)})$
of size $m$ (in the sense of Definition \ref{def:covering.system}).
\end{prop}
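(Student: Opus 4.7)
The plan is to build the covering system inductively over $\ell = 1, \dots, m$, producing at each stage a finite family of u.c.p.\ maps that commute (in operator norm) with everything assembled at earlier stages, and finally to glue the $m$ resulting open covers into one common refinement.

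The starting point is a pointwise-plus-local existence lemma. For each $\tau \in bT(A)$ and each compact $\Omega' \subset A$, the McDuff hypothesis on $N_\tau$ supplies a unital embedding $M_2 \hookrightarrow N_\tau' \cap (N_\tau)^\omega$ via Remark~\ref{rem:central.sequences.in.N}, so Lemma~\ref{lem:F(A).onto.Nc} yields a sequence of u.c.p.\ maps $M_2 \to A$ that is asymptotically $\|\cdot\|_{2,\tau}$-multiplicative on $(M_2)_1$ and asymptotically central (in operator norm) on $\Omega'$. Picking a single term far enough along the sequence produces a u.c.p.\ map $V_\tau \colon M_2 \to A$ satisfying $\tau\big(V_\tau(b^*b) - V_\tau(b)^* V_\tau(b)\big) < \varepsilon$ for every $b \in (M_2)_1$ and $\|[a, V_\tau(b)]\| < \varepsilon$ for every $a \in \Omega'$, $b \in (M_2)_1$. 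Since the set $K_\tau := \{V_\tau(b^*b) - V_\tau(b)^* V_\tau(b) : b \in (M_2)_1\} \subset A_+$ is compact, the function $\tau' \mapsto \sup_{x \in K_\tau} \tau'(x)$ is continuous on $T(A)$ (equicontinuity of $\{\tau' \mapsto \tau'(x)\}_{x \in K_\tau}$ in the weak$^*$ topology); hence the strict inequality persists on some weak$^*$-open neighbourhood $U_\tau \subset bT(A)$ of $\tau$, giving $\varepsilon$-multiplicativity in $\|\cdot\|_{2,\tau'}$ uniformly for $\tau' \in U_\tau$.

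The inductive step for $\ell = 1, \dots, m$ is now short. Assume $\mathcal{V}^{(1)}, \dots, \mathcal{V}^{(\ell-1)}$ and open covers $\mathcal{U}^{(1)}, \dots, \mathcal{U}^{(\ell-1)}$ of $bT(A)$ have been constructed. Set
\[
\Omega_\ell := \Omega \cup \bigcup_{\ell' < \ell}\; \bigcup_{V \in \mathcal{V}^{(\ell')}}\; V((M_2)_1),
\]
which is a compact subset of $A$ (a finite union of continuous images of $(M_2)_1$). Apply the pointwise-plus-local existence lemma with $\Omega_\ell$ at every $\tau \in bT(A)$; by compactness of $bT(A)$ extract a finite subcover $U_{\tau_1}^{(\ell)}, \dots, U_{\tau_{k_\ell}}^{(\ell)}$, and set $\mathcal{V}^{(\ell)} := \{V_{\tau_j}^{(\ell)} : 1 \le j \le k_\ell\}$ and $\mathcal{U}^{(\ell)}$ to be this subcover. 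Every map in $\mathcal{V}^{(\ell)}$ is $\varepsilon$-central on $\Omega_\ell$, hence $\varepsilon$-commutes in operator norm with every map from each earlier $\mathcal{V}^{(\ell')}$; antisymmetry of the commutator then gives condition~(iv) of Definition~\ref{def:covering.system} for every pair $\ell \ne \ell'$.

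Finally, let $\mathcal{U}$ be the common refinement of $\mathcal{U}^{(1)}, \dots, \mathcal{U}^{(m)}$ obtained by taking non-empty intersections $U_{j_1} \cap \cdots \cap U_{j_m}$ with $U_{j_\ell} \in \mathcal{U}^{(\ell)}$. For every $U \in \mathcal{U}$ and every $\ell$ there is $j$ with $U \subseteq U_{\tau_j}^{(\ell)}$, so $V_{\tau_j}^{(\ell)}$ realises the trace-multiplicativity clause (v) on $U$; condition (iii) is automatic since each $V_{\tau_j}^{(\ell)}$ was built $\varepsilon$-central on $\Omega \subseteq \Omega_\ell$, and Remark~\ref{rem:refinement} ensures refinement preserves the covering-system property. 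The main point to watch is the operator-norm commutation~(iv) across different levels: it cannot be extracted from multiplicativity of a single lift $M_2^{\otimes m} \to N_\tau' \cap (N_\tau)^\omega$ (which only supplies $\|\cdot\|_{2,\tau}$-control), and is instead secured by the inductive enlargement of $\Omega_\ell$ together with the operator-norm centrality output of Lemma~\ref{lem:F(A).onto.Nc}.
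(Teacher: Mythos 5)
Your proposal is correct and follows essentially the same route as the paper: the base construction (Lemma~\ref{lem:F(A).onto.Nc} plus continuity of $\tau'\mapsto\tau'\bigl(V(b^*b)-V(b)^*V(b)\bigr)$ and compactness of $bT(A)$), the enlargement of the compact set by the images $V((M_2)_1)$ of the earlier families to secure the operator-norm commutation across levels, and the refinement of the open covers (Remark~\ref{rem:refinement}) are exactly the paper's argument. The only cosmetic difference is that you take one common refinement at the end rather than refining at each inductive step.
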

\begin{proof}
We proceed by induction over the size $m$.
In case where $m=1$ we need only find one collection $\mathcal{V}^{(1)} = (V_j^{(1)})$
satisfying property (iii) and (v)  of Definition \ref{def:covering.system} with an  open covering $\mathcal{U}$
of $bT(A)$ (to be found).
Let $\tau\in bT(A)$. By the assumption on 
$N_\tau'\cap N_\tau^\omega$, Lemma \ref{lem:F(A).onto.Nc}
gives a unital completely positive map $V\colon M_2\to A$, that satisfies $\| [a,V(b) ]\| <\varepsilon$ for all  $a\in \Omega$ and for all $b$ in the unit ball of $M_2$.
It is easy to see that the map 
$$(b,\tau)\mapsto \tau\big(V(b^*b)-V(b)^*V(b)\big),  \qquad (b,\tau)\in (M_2)_1\times bT(A),$$
is uniformly continuous.
Thus there exists, for every $\varepsilon>0$ and every $\tau\in bT(A)$,
an open neighborhood $U_\tau$  of $\tau$ and a unital completely positive map
$V_\tau \colon M_2\to A$ so that 
$$\|[V_\tau(b),a]\| <\varepsilon, \qquad \tau' \big(V_{\tau}(b^*b)-V_{\tau}(b)^*V_{\tau}(b)\big) <\varepsilon$$
for all contractions $b \in M_2$, for all $a \in \Omega$, and 
for all $\tau'\in U_\tau$. Since $bT(A)\subseteq T(A)$ is compact,
we can find a finite subset $S$ of $bT(A)$ such that
$\mathcal{U}:= \{ U_\tau : \tau\in S \}$ and 
$\mathcal{V}^{(1)}:= \{ V_\tau : \tau\in S\}$ satisfy
the conditions (i), (ii), (iii), and  (v) of Definition \ref{def:covering.system}
for the given $\varepsilon>0$.

Suppose now that, for some $m \ge 1$ and for each given $\varepsilon>0$ and compact $\Omega \subset A$, there is 
an $(\varepsilon, \Omega)$-commuting covering systems $(\mathcal{U}; 
\mathcal{V}^{(1)},\ldots, \mathcal{V}^{(m)})$  of size $m$, where $\mathcal{V}^{(\ell)} = (V_j^{(\ell)})_{1 \le j \le \mu(\ell)}$. Then 
$$\Omega' = \Omega \, \cup \bigcup_{1 \le \ell\leq m, \, 1\le j\leq \mu(\ell)}  V^{(\ell)}_j\big((M_2)_1\big) \subset A$$
is compact, and so we can find a finite  $(\varepsilon, \Omega' )$-commuting
covering systems, $(\mathcal{U}', \mathcal{V}')$, of size 1. Put $\mathcal{V}^{(m+1)}:= \mathcal{V}'$, and
refine $\mathcal{U}$ and $\mathcal{U}'$ by taking all intersections $U'\cap U$
of open sets $U'\in \mathcal{U}'$ and $U\in \mathcal{U}$.
Then $(\mathcal{U}''; 
\mathcal{V}^{(1)},\ldots, \mathcal{V}^{(m)}, \mathcal{V}^{(m+1)})$
is an $(\varepsilon, \Omega)$-commuting
covering systems of size $m+1$, cf.\ Remark \ref{rem:refinement}.
\end{proof}
\begin{lemma}\label{lem:modification}
Let $A$ be a unital and separable \Cs{} with $T(A) \ne \emptyset$. Suppose further that $\partial T(A)$ is closed in $T(A)$, that $\partial T(A)$ has topological
dimension $m < \infty$, and that  for each $\tau\in \partial T(A)$
the corresponding II$_1$-factor $N=\rho_\tau(A)''$ is McDuff.

Then, for each compact subset $\Omega$
and for each $\varepsilon >0$, there exist
completely positive contractions $W^{(1)},\ldots,W^{(m+1)}\colon M_2\to A$
with the following properties:
\begin{itemize}
\item[(i)]    $\| [a,W^{(\ell)} (b)] \| <\varepsilon $ for 
 $\ell=1,\ldots, m+1$, for all $a\in \Omega$,
and for all contractions $b\in M_2$. \vspace{.15cm}
\item[(ii)] $\| [W^{(\ell)}(b),W^{(\ell)}(1)^{1/2}] \| <\varepsilon$ for 
 $\ell=1,\ldots, m+1$ and all contractions $b\in M_2$.  \vspace{.15cm}
\item[(iii)]  $\| [W^{(k)}(b),W^{(\ell)}(c)] \| <\varepsilon $ 
for all contractions $b,c\in M_2$
and all $k\not=\ell \in \{ 1,\ldots, m+1\}$.  \vspace{.15cm}
\item[(iv)] 
$\big\| W^{(\ell)} (1)^{1/2} W^{(\ell)} (b^*b)W^{(\ell)} (1)^{1/2}  -
W^{(\ell)}  (b)^*W^{(\ell)}  (b) \big\|_1
< \varepsilon$
for all $\ell=1,\ldots, m+1$ and all contractions $b\in M_2$. \vspace{.1cm}
\item[(v)] 
$\sup_{\tau\in bT(A)}\,\, \bigl|\, \tau\bigl(\sum_\ell  W^{(\ell)} (1) -1\bigr) \,\bigr| \, 
<\varepsilon$.  
\item[(vi)] $\sup_{\tau\in bT(A)} \,\,  
\tau \bigl(W^{(\ell)} (1)\bigr) - \tau \bigl(W^{(\ell)} (1)^{1/2}\bigr)^2    <\varepsilon$ for all $\ell$.
\end{itemize}
\end{lemma}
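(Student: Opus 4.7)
The plan is to build each $W^{(\ell)}$ as a compression $W^{(\ell)}(b)=\sum_i a_i^{(\ell)}V_i^{(\ell)}(b)a_i^{(\ell)}$, where the $V_i^{(\ell)}$ are drawn from an $(\varepsilon',\Omega')$-commuting covering system of size $m+1$ (Proposition~\ref{prop:covering.system}) and the $a_i^{(\ell)}$ realize a partition of unity on $\partial T(A)$ inside $A$ (Corollary~\ref{cor:partial.T(A).closed}). The finite-dimensionality of $\partial T(A)$ enters via Ostrand's theorem, which refines the open covering $\mathcal{U}$ of the system into $\bigsqcup_{\ell=1}^{m+1}\mathcal{U}^{(\ell)}$ with each $\mathcal{U}^{(\ell)}$ a finite family of pairwise disjoint open sets --- matching exactly the number $m+1$ of maps to construct.

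More precisely: fix a finite $\varepsilon$-net $\mathcal{F}$ of the closed unit ball of $M_2$, choose $\varepsilon'\ll\varepsilon$, and apply Proposition~\ref{prop:covering.system} to obtain $(\mathcal{U};\mathcal{V}^{(1)},\dots,\mathcal{V}^{(m+1)})$. By Ostrand's theorem decompose $\mathcal{U}$ as above, label $\mathcal{U}^{(\ell)}=\{U_1^{(\ell)},\dots,U_{\mu(\ell)}^{(\ell)}\}$, and for each $U_i^{(\ell)}$ select $V_i^{(\ell)}\in\mathcal{V}^{(\ell)}$ that is $\varepsilon'$-multiplicative in $\tau$-trace on $U_i^{(\ell)}$ (property~(v) of Definition~\ref{def:covering.system}). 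Choose a partition of unity $\sum_{\ell,i}f_i^{(\ell)}\equiv 1$ on $bT(A)$ with $\mathrm{supp}\,f_i^{(\ell)}\subseteq U_i^{(\ell)}$, so $f_i^{(\ell)}f_j^{(\ell)}=0$ for $i\ne j$. Set $\Omega':=\Omega\cup\bigcup_{\ell,i}V_i^{(\ell)}(\mathcal{F})$ and apply Corollary~\ref{cor:partial.T(A).closed} to $(f_i^{(\ell)})$ and $\Omega'$ to produce positive contractions $a_i^{(\ell)}\in A$ with $a_i^{(\ell)}a_j^{(\ell)}=0$ for $i\ne j$, $\tau((a_i^{(\ell)})^2)\approx f_i^{(\ell)}(\tau)$, $\tau(a_i^{(\ell)})\approx f_i^{(\ell)}(\tau)^{1/2}$ for all $\tau\in bT(A)$, the required commutator estimates against $\Omega'$, and $\sum_\ell (a^{(\ell)})^2\le 1+\varepsilon'$ where $a^{(\ell)}:=\sum_i a_i^{(\ell)}$. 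Intra-color orthogonality gives $W^{(\ell)}(1)=\sum_i (a_i^{(\ell)})^2=(a^{(\ell)})^2$, hence $W^{(\ell)}(1)^{1/2}=a^{(\ell)}$.

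Conditions (i)--(iii) reduce to the commutator estimates in Proposition~\ref{prop:covering.system} and Corollary~\ref{cor:partial.T(A).closed}, using $V_i^{(\ell)}(\mathcal{F})\subset\Omega'$ and extending to all contractions of $M_2$ via the $\varepsilon$-net $\mathcal{F}$. Condition (v) follows from $\sum_\ell\tau(W^{(\ell)}(1))=\sum_{\ell,i}\tau((a_i^{(\ell)})^2)\approx\sum_{\ell,i}f_i^{(\ell)}(\tau)=1$; condition (vi) follows because, by disjointness of the $f_i^{(\ell)}$ within a color, for each $\tau$ at most one index $i_0$ has $f_{i_0}^{(\ell)}(\tau)\ne 0$, whence $\tau(a^{(\ell)})^2\approx f_{i_0}^{(\ell)}(\tau)\approx\tau((a^{(\ell)})^2)$. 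The main obstacle is (iv); suppressing $\ell$-superscripts and expanding via $a_i a_j=0$ for $i\ne j$ gives
\begin{equation*}
W^{(\ell)}(1)^{1/2}W^{(\ell)}(b^*b)W^{(\ell)}(1)^{1/2}-W^{(\ell)}(b)^*W^{(\ell)}(b) = \sum_i \bigl(a_i^2 V_i(b^*b) a_i^2 - a_i V_i(b)^* a_i^2 V_i(b) a_i\bigr).
\end{equation*}
The approximate commutation $[a_i,V_i(b)]=O(\varepsilon')$ (valid because $V_i(b)$ is well-approximated by elements of $\Omega'$) lets us move the $a_i$'s past $V_i(b)$, reducing each summand, up to an $O(\varepsilon')$ operator-norm error, to $(V_i(b^*b)-V_i(b)^*V_i(b))\,a_i^4$, which is positive by Kadison--Schwarz for the UCP map $V_i$. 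For $\tau\in U_i^{(\ell)}$, property~(v) gives $\tau(V_i(b^*b)-V_i(b)^*V_i(b))<\varepsilon'$, which combined with $a_i^4\le 1$ and the cyclic identity $\tau(a_i^4 X)=\tau(a_i^2 X a_i^2)$ for $X\ge 0$ bounds the contribution by $\varepsilon'$; for $\tau\notin U_i^{(\ell)}$, the factor $\tau(a_i^4)\le\tau((a_i^{(\ell)})^2)\approx f_i^{(\ell)}(\tau)=0$ kills the summand. Summing over $i$ (only one index is ``active'' per $\tau$), taking $\sup_\tau$, and choosing $\varepsilon'$ and the net $\mathcal{F}$ fine enough relative to $\varepsilon$ and $\max_\ell\mu(\ell)$ closes (iv). The delicate point is synchronizing these quantitative choices, since $\mu(\ell)$ is only determined after Proposition~\ref{prop:covering.system} is applied.
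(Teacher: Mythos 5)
Your construction is essentially identical to the paper's proof: an $(\varepsilon,\Omega)$-commuting covering system of size $m+1$ from Proposition~\ref{prop:covering.system}, a colored refinement into $m+1$ families of pairwise disjoint open sets coming from $\dim bT(A)=m$ (your appeal to Ostrand is the same fact as the paper's covering-equals-decomposition dimension), a subordinate partition of unity realized in $A$ by the orthogonal, almost-central positive contractions of Corollary~\ref{cor:partial.T(A).closed}, and $W^{(\ell)}(b)=\sum_i a_i^{(\ell)}V_i^{(\ell)}(b)a_i^{(\ell)}$, verified by the same orthogonality and commutator estimates. The ``delicate point'' you flag is not a genuine obstruction and is resolved exactly as in the paper: the covering-system tolerance enters each estimate only once (via the max-over-orthogonal-summands trick and your own one-active-index-per-$\tau$ observation), so it never needs to depend on $\mu(\ell)$, whereas the tolerance in Corollary~\ref{cor:partial.T(A).closed}, which does need to be of order $\varepsilon/\mu(\ell)$, is legitimately chosen only after the covering, its refinement, and the numbers $\mu(\ell)$ have been fixed.
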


\begin{proof}
Let $\varepsilon > 0$ and $\Omega \subset A$ be given. Assume without loss of generality that $\ep < 1/2$.  Let $(\mathcal{U}; \mathcal{V}^{(1)},\ldots, \mathcal{V}^{(m)}, \mathcal{V}^{(m+1)})$ be
an $(\varepsilon/2, \Omega)$-commuting
covering systems of size $m+1$, cf.\ Proposition \ref{prop:covering.system}.
Since the covering dimension and the decomposition dimension
for compact metric spaces are 
the same\footnote{\, See  \cite[lem.3.2]{BlanKir.BanachBundles}
for an elementary proof of the equivalence of covering dimension
and decomposition dimension in case of normal topological spaces.}, 
we find
a refinement $\mathcal{U}'$  of the covering $\mathcal{U}$, such that
$\mathcal{U}'$ is the union of 
$m+1$ finite subsets $\mathcal{U}_1,\ldots, \mathcal{U}_{m+1}$,
and each $\mathcal{U}_\ell = (U^{\ell}_j)_{1 \le j \le \mu(\ell)}$ has the property, that any two
open sets $U^{(\ell)}_i$ and $U^{(\ell)}_j$ in $\mathcal{U}_\ell$ are disjoint if $i \ne j$. The number $\mu(\ell)$ is the cardinality of the set $\mathcal{U}_\ell$.

For each open set $U^{(\ell)}_j$ in $\mathcal{U}_j$ there is a unital completely positive map 
$V^{(\ell)}_j \colon M_2 \to A$ in $\mathcal{V}^{(\ell)}$ such that (iii), (iv) and (v) of Definition \ref{def:covering.system} hold, with (v) taking the concrete form:
$$ 
\sup_{\tau\in U^{(\ell)}_j}  
 \tau \big(V^{(\ell)}_j(b^*b) - V^{(\ell)}_j(b)^*V^{(\ell)}_j(b)\big)
 <\varepsilon/2,
 $$
for all contractions $b \in M_2$. Let  $(f^{(\ell )}_j)$ be a partition of the unit for $bT(A)$ subordinate to the open covering $\mathcal{U}' =
\mathcal{U}_1 \cup \cdots \cup \mathcal{U}_{m+1}$  of $bT(A)$.

The functions $(f^{(\ell )}_j)_{1\leq j \leq \mu(\ell )}$
are pairwise orthogonal for each fixed $\ell$. It follows from Corollary \ref{cor:partial.T(A).closed} that there exist positive contractions $(a^{(\ell)}_j)$ in $A$ such that (i)--(v) of  Corollary \ref{cor:partial.T(A).closed} holds with "$\ep = \ep_0$" and "$\Omega = \Omega_0$", where
$$\ep_0 = \varepsilon/G, \qquad 
G=\max\Big\{\sum_{k=1}^{m+1} \mu(\ell), \, 8 \cdot \max_{1 \le \ell \le m+1}  \mu(\ell)^2\Big\},$$
and 
$$
\Omega_{0} = 
\Omega \cup 
\big\{  V^{(\ell)}_j(b) :
b\in (M_2)_1, \, 1\leq \ell \leq m+1,\, 1\leq j\leq   \mu(\ell) \big\}.$$ 

Define completely positive contractions $W^{(\ell)}\colon M_2\to A$ by
$$
W^{(\ell )} (b)= 
\sum_{1\leq j\leq \mu(\ell)} \,  a^{(\ell)}_j V^{(\ell)}_j (b) a^{(\ell)}_j, \qquad b \in M_2, \quad 1 \le \ell \le m+1.
$$
We shall show that $W^{(1)}, W^{(2)}, \dots, W^{(m+1)}$ satisfy conditions (i)--(vi). The verifications are somewhat lengthy, but not difficult. We use that the number $\ep_0 > 0$ has been chosen such that for all $k, \ell$, 
$$4\mu(\ell) \ep_0\le \ep, \; \; 8\mu(\ell)\mu(k)\ep_0 \le \ep, \; \;  \ep_0 \sum_{k=1}^{m+1} \mu(\ell) \le \ep, \; \;  (3+\ep_0)\ep_0 \mu(\ell) \le \ep, \; \;  3\ep_0 \le \ep.$$
We shall  frequently use that
\begin{equation} \label{eq:W(1)}
W^{(\ell )}(1)= \sum_{1 \le j \le \mu(\ell)} (a^{(\ell)}_j)^2, \qquad W^{(\ell )}(1)^{1/2}= \sum_{1 \le j \le \mu(\ell)} a^{(\ell)}_j.
\end{equation}
The latter follows from the former together with the fact that $a^{(\ell)}_i \perp a^{(\ell)}_j$ when $i \ne j$.

Ad (i):  Fix $\ell = 1,2, \dots, m+1$. We show  that  $\| [a,W^{(\ell)}  (b)] \| <\varepsilon $ for  all $a\in \Omega$ and for all contractions $b\in M_2$. Write $ [a,W^{(\ell)}  (b)]  = S_1+S_2+S_3$, where
$$ S_1 = \sum_{j=1}^{\mu(\ell)}   [a, a^{(\ell)}_j] \cdot V^{(\ell)}_j (b) a^{(\ell)}_j , \; \;
S_2 = \sum_{j=1}^{\mu(\ell)}   a^{(\ell)}_j [a,V^{(\ell)}_j (b)] a^{(\ell)}_j, \; \;
S_3 =\sum_{j=1}^{\mu(\ell)}   a^{(\ell)}_j V^{(\ell)}_j (b)\cdot [a, a^{(\ell)}_j]. $$
Since the summands in $S_2$ are pairwise orthogonal, and since
$V^{(\ell)}_j$ is part of an $(\varepsilon/2 , \Omega)$-commuting covering, we get
$$
\| S_2 \|=
\sup_j  \| a^{(\ell)}_j [a,V^{(\ell)}_j (b)] a^{(\ell)}_j\|  <  \varepsilon/2.
$$
Since $ a^{(\ell)}_j$ and $b$ are contractions and $\|[a,a^{(\ell)}_j ]\| \le \ep_0$, it follows that
$\|S_\iota\| \le \mu(\ell) \cdot \ep_0$ for $\iota =1,3$.
We therefore get 
$$\| [a ,W^{(\ell)}  (b)] \| \le \|S_1\| +\|S_2\| +\|S_3\| <  \varepsilon/2    +  2\mu(\ell) \cdot \ep_0 \leq \varepsilon.$$

Ad (ii):  Write
 $$ 
 [W^{(\ell)} (b),W^{(\ell)} (1)^{1/2}]= 
 \sum_{j=1}^{\mu(\ell)}  a^{(\ell)}_j [a^{(\ell)}_j,V^{(\ell)}_j (b)] a^{(\ell)}_j.
 $$
Since $V^{(\ell)}_j(b) \in \Omega_0$, and because the elements 
 $a^{(\ell)}_j$ are pairwise orthogonal contractions, we see that the right-hand side is at most $\ep_0$, which again is smaller than $\ep$.

Ad (iii): Write $[W^{(k)}(b), W^{(\ell)}(c)]  = S_1+S_2+S_3$, where
\begin{eqnarray*}
S_{1} & = & \sum_{i=1}^{\mu(k)} \sum _{j=1}^{\mu(\ell)} \Big(
a^{(k)}_i V^{(k)}_i(b) a^{(k)}_i  a^{(\ell)}_j V^{(\ell)}_j (c)a^{(\ell)}_j
- 
a^{(k)}_i a^{(\ell)}_jV^{(k)}_i(b)V^{(\ell)}_j (c) a^{(\ell)}_j a^{(k)}_i \Big), \\
S_2  & = & 
\sum_{i=1}^{\mu(k)} \sum _{j=1}^{\mu(\ell)}    a^{(k)}_i a^{(\ell)}_j \big[V^{(k)}_i(b),V^{(\ell)}_j (c)\big] a^{(\ell)}_j a^{(k)}_i,\\
S_{3} & = &\sum_{i=1}^{\mu(k)} \sum _{j=1}^{\mu(\ell)} 
\Big(a^{(k)}_i a^{(\ell)}_j V^{(\ell)}_j (c) V^{(k)}_i(b) a^{(\ell)}_j a^{(k)}_i 
-   
a^{(\ell)}_j V^{(\ell)}_j (c)a^{(\ell)}_j a^{(k)}_i V^{(k)}_i(b)a^{(k)}_i\Big).
\end{eqnarray*}
We estimate each of $\|S_j\|$, $j=1,2,3$, below. The pairwise orthogonality of the elements 
$(a^{(\ell)}_j)_{1 \le j \le \mu(\ell)}$ and of the elements $(a^{(k)}_i)_{1 \le i \le \mu(k)}$ implies that 
\begin{eqnarray*}
\| S_2 \| &= &
\| \sum_{i}   a^{(k)}_i \Big( \sum_j a^{(\ell)}_j [V^{(k)}_i(b),V^{(\ell)}_j (c)] a^{(\ell)}_j \Big) a^{(k)}_i \| 
\\ &\leq & \max_{i} \| \sum_j a^{(\ell)}_j [V^{(k)}_i(b),V^{(\ell)}_j (c)] a^{(\ell)}_j\| \\
&\leq &   \max_{i,j} 
\| [V^{(k)}_i(b),V^{(\ell)}_j (c)]\| \; < \; \varepsilon/2.
\end{eqnarray*}
By construction, the positive contractions $a^{(k)}_i$ and $a^{(\ell)}_j$ commute with each other, and with all elements of $\Omega_0$, up to $\ep_0$. It follows easily from this that $\|S_\iota\| \le 2\mu(\ell)\mu(k)\ep_0$, $\iota=1,3$. We conclude that $\|[W^{(k)}(b), W^{(\ell)}(c)]\| < \ep/2+4\mu(\ell)\mu(k)\ep_0 < \ep$.

Ad (iv): Since we only need to consider elements from one collection $\mathcal{V}^{(\ell)}$ we drop the upper index and write $\mu = \mu(\ell)$, $a_j = a_j^{(\ell)}$, $V_j = V_j^{(\ell)}$, and $U_j = U_j^{(\ell)}$. Since  $W^{(\ell)} (1)^{1/2}= \sum_j a_j $, cf.\ \eqref{eq:W(1)}, we have
$$
W^{(\ell)} (1)^{1/2} W^{(\ell)} (b^*b)W^{(\ell)} (1)^{1/2}  -
W^{(\ell)}  (b)^*W^{(\ell)}  (b) =
\sum_{j=1}^{\mu} a_j T_j a_j,
$$
where $T_j = a_jV_j(b^*b)a_j  - V_j(b)^*a_j^2 \, V_j(b)$. 
It thus suffices to show that $\|a_jT_ja_j\|_1 < \ep/\mu$ for all $j$. Now, in the notation of Definition \ref{def:seminorms}, 
$$\|a_jT_ja_j\|_1 = \max_{1 \le i \le \mu} \|a_jT_ja_j\|_{1,U_i}.$$
Let us first consider the case where $i \ne j$. As $T_j$ is a contraction we have $\|a_jT_ja_j\|_{1,U_i} \le \|a_j\|_{1,U_i}$. By Corollary \ref{cor:partial.T(A).closed} (iii) and the fact that $f_j^{(\ell)}(\tau) = 0$ when $\tau \in U_i$, we conclude that $\|a_j\|_{1,U_i} < \ep_0 \le \ep/\mu$. Consider next the case $i=j$.
Write $T_j = a_jC_ja_j-R_j$, where 
$$
R_j=V_j(b)^*a_j^2 \, V_j(b) - a_jV_j(b)^*V_j(b)a_j, \qquad  C_j=V_j(b^*b)-V_j(b)^*V_j(b)\ge 0.
$$
Then  $\| R_j \| < 2\ep_0 $, because all $a_j$ commute with all elements of $\Omega_0$ within $\ep_0$, and $\|C_j\|_{1,U_j}< \ep_0$, by Definition \ref{def:covering.system} (v). This shows that
\begin{eqnarray*}
\|a_jT_ja_j\|_{1,U_j} & \le & \|a_j^2 C_ja_j^2\|_{1,U_j} + \|a_jR_ja_j\|_{1,U_j} \\
& \le & \|C_j^{1/2}a_j^4C_j^{1/2}\|_{1,U_j} + \|a_jR_ja_j\| \\
& \le & \|C_j\|_{1,U_j} + \|R_j\| \; < \; 3\ep_0 \; \le \; \ep/\mu.
\end{eqnarray*}

Ad (v): Use that $( f^{(\ell)}_j)$ is a partition of the unit, that $|f^{(\ell)}_j(\tau) - \tau\big((a^{(\ell)}_j)^2\big)| < \ep_0$ for all $\tau \in bT(A)$, and \eqref{eq:W(1)} to see that
\begin{eqnarray*}
 \big| \tau\big(\sum_{\ell=1}^{m+1}  W^{(\ell)} (1) -1\big) \big|  &\leq &
 \sum_{\ell=1}^{m+1} \, \sum_{j=1}^{\mu(\ell)}  \big|\tau\big((a^{(\ell)}_j)^2)- f^{(\ell)}_j(\tau)\big|
\; \leq  \; \ep_0 \cdot \! \sum_{\ell =1}^{m+1} \mu(\ell) \; < \; \varepsilon.
\end{eqnarray*}

Ad (vi):  Retaining the convention  from the proof of (iv) of dropping the superscript, we have
$W^{(\ell)} (1)= \sum _j   a_j^2$ and $W^{(\ell)} (1)^{1/2}= \sum _j   a_j$, cf.\ Equation \eqref{eq:W(1)}. Moreover, since the functions $(f_j)$ are pairwise orthogonal, we have $(\sum_j f_j^{1/2})^2 = \sum_j f_j$. Using property (iii) of 
Corollary~\ref{cor:partial.T(A).closed} we get, for each $\tau \in bT(A)$, 
\begin{eqnarray*}
&& \tau(W^{\ell}(1)) - \tau\big(W^{\ell}(1)^{1/2}\big)^2 \\ 
&\le& \big|\sum_{j=1}^{ \mu} \tau(a_j^2) - \sum_{j=1}^{ \mu} f_j(\tau)\big| + \big| \big(\sum_{j=1}^{ \mu} f_j(\tau)^{1/2}\big)^2 - 
\big(\sum_{j=1}^{ \mu} \tau(a_j)\big)^2\big| \\
&\le & \mu \cdot \ep_0+(2+\ep_0)\big| \sum_{j=1}^{ \mu} f_j(\tau)^{1/2} - \sum_{j=1}^{ \mu}  \tau(a_j) \big| \\
&< &  \mu \cdot \ep_0 +  (2+\ep_0) \cdot  \mu \cdot \ep_0 \; \le \;  (3+\ep_0)\ep_0  \mu 
\; \le \; \ep.
\end{eqnarray*}
We have used that $\sum_j f_j(\tau)^{1/2}  \le 1$ and that $\sum_j \tau(a_j) \le 1+\ep_0$. The latter estimate follows from Corollary~\ref{cor:partial.T(A).closed} (iv), which implies that $\|\sum_j a_j\| \le  (1+\ep_0)^{1/2}$. \end{proof}

\noindent The following lemma is almost contained in \cite[Lemma 5.7]{HirWinZac:Rokhlin}. 
However, as the statement in \cite{HirWinZac:Rokhlin} formally is different from the fact that we shall need, we provide a proof of our lemma for the convenience of the reader. 

\begin{lemma}\label{lem:commuting.order.zero.c.p.c}
Let $B$ be a \Cs, let $p \ge 2$, and let $\varphi_1,\ldots, \varphi_n\colon M_p\to B$
completely positive order zero maps whose images commute, i.e., 
$\varphi_k (b)\varphi_\ell (c)=\varphi_\ell (c)\varphi_k (b)$ when 
$k\ne \ell$ and $b,c\in M_p$.
\begin{itemize}
\item[(i)] If $\| \varphi_1(1)+\varphi_2(1)+\cdots +\varphi_n(1)\| \leq 1$,
then there exists a completely positive order
zero map $\psi\colon M_p\to B$ 
with 
$$\psi(1) = \varphi_1(1)+\varphi_2(1)+\cdots +\varphi_n(1)$$ 
such that $\psi(M_p)$
is contained in the sub-\Cs{} of $B$ that
is generated by the images $\varphi_k(M_p)$,
$k=1,\ldots,n$.
\item[(ii)] Suppose that $B$ is unital and that
$\varphi_1(1)+\varphi_2(1)+\cdots +\varphi_n(1)=1$.
Then there is a unital $^*$-homomorphism $\psi \colon M_p \to B$.
\end{itemize}
\end{lemma}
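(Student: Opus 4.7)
The plan is to exploit the Winter--Zacharias structure theorem to write each $\varphi_k(a) = h_k \pi_k(a)$, where $h_k := \varphi_k(1)$, $D_k := C^*(\varphi_k(M_p))$, and $\pi_k : M_p \to \mathcal{M}(D_k)$ is a unital ${}^*$-homomorphism with $h_k$ central in $\mathcal{M}(D_k)$. I would let $D := C^*\!\bigl(\bigcup_k \varphi_k(M_p)\bigr) \subseteq B$; by the commuting images hypothesis, the $D_k$ pairwise commute inside $B$, so $D$ is the closed linear span of products $d_1 d_2 \cdots d_n$ with $d_j \in D_j$. The target will be $\psi(a) := h\,\pi_1(a)$ with $h := h_1 + \cdots + h_n$, once $\pi_1$ has been extended to a unital ${}^*$-homomorphism into $\mathcal{M}(D)$ and $h$ has been shown to commute with $\pi_1(M_p)$.

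The key step, and the main obstacle I expect, is to establish joint commutation inside $B^{**}$: namely $[\pi_k(a), h_\ell] = 0$ and $[\pi_k(a), \pi_\ell(b)] = 0$ for $a,b \in M_p$ and $k \ne \ell$. Expanding $\varphi_\ell(1)\varphi_k(a) = \varphi_k(a)\varphi_\ell(1)$ yields $h_k[h_\ell, \pi_k(a)] = 0$, and adjointing gives $[h_\ell, \pi_k(a)]h_k = 0$. Let $s_k$ denote the support projection of $h_k$ in $B^{**}$, so $s_k = \pi_k(1)$ when $\pi_k$ is viewed as taking values in $s_k B^{**} s_k$; since $h_k x = 0$ iff $s_k x = 0$ (spectral calculus), these identities improve to $s_k[h_\ell, \pi_k(a)] = 0 = [h_\ell, \pi_k(a)]s_k$. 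But $s_k$ commutes with $h_\ell$ (as $h_k$ and $h_\ell$ commute) and $\pi_k(a) = s_k\pi_k(a)s_k$, so the commutator $[h_\ell, \pi_k(a)]$ is compressed to $s_kB^{**}s_k$ and hence vanishes. Plugging this back into $\varphi_k(a)\varphi_\ell(b) = \varphi_\ell(b)\varphi_k(a)$ produces $h_kh_\ell[\pi_k(a), \pi_\ell(b)] = 0$, and the same support-projection argument, now applied to $s_ks_\ell$, yields $[\pi_k(a), \pi_\ell(b)] = 0$.

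With these commutations secured, each $\pi_k(a)$ commutes with every $D_\ell$ for $\ell \ne k$ and multiplies $D_k$, so the rule
\[
\pi_k(a)\cdot(d_1 \cdots d_n) := d_1 \cdots d_{k-1}\,(\pi_k(a)d_k)\,d_{k+1} \cdots d_n
\]
extends by linearity and the $B^{**}$-norm bound $\|\pi_k(a)d\| \le \|a\|\,\|d\|$ to a two-sided multiplier of $D$, making $\pi_k : M_p \to \mathcal{M}(D)$ a unital ${}^*$-homomorphism with $\pi_k(1) = 1_{\mathcal{M}(D)}$. Then $h \in D$ commutes with $\pi_1(M_p)$ (each $h_j$ does), so $\psi(a) := h\,\pi_1(a) = h^{1/2}\pi_1(a)h^{1/2}$ is completely positive with image in $D$ (since $D$ is an ideal of $\mathcal{M}(D)$), satisfies $\psi(1) = h = \sum_k \varphi_k(1)$, and
\[
\psi(a)\psi(b) = h^2\,\pi_1(ab) = \psi(1)\psi(ab),
\]
so $\psi$ is order zero, proving (i). Part (ii) is immediate: when $h = 1_B$, the displayed identity collapses to $\psi(a)\psi(b) = \psi(ab)$, upgrading $\psi$ into a unital ${}^*$-homomorphism $M_p \to D \subseteq B$.
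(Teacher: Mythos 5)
There is a genuine gap, and it sits exactly at the point your proposal passes over quickly: the claim that the supporting homomorphism $\pi_1$ of $\varphi_1$ "extends to a unital \sh{} $M_p \to \mathcal{M}(D)$ with $\pi_1(1)=1_{\mathcal{M}(D)}$", so that $\psi(a):=h\,\pi_1(a)$ works. In $B^{**}$ one has $\pi_1(1)=s_1$, the support projection of $h_1=\varphi_1(1)$, and $s_1$ acts on $D$ as a genuine compression, not as the identity, unless $h_1$ happens to be strictly positive in $D$. Your multiplier formula does not repair this: $D$ is \emph{not} the closed span of full products $d_1d_2\cdots d_n$ (monomials omitting some factors occur), so the rule does not even define the action of $\pi_1(a)$ on, say, an element of $D_2$ alone, and the natural action by multiplication in $B^{**}$ sends it to $\pi_1(a)d_2$, which need not lie in $D$ and for $a=1$ gives $s_1d_2\neq d_2$. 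Concretely, take $n=2$, $B=C_0((0,1],M_p)\oplus C_0((0,1],M_p)$, $\varphi_1(a)=(\iota\otimes a,0)$, $\varphi_2(a)=(0,\iota\otimes a)$: the images commute, $h=(\iota\otimes 1,\iota\otimes 1)$, but your $\psi(a)=h\pi_1(a)=(\iota\otimes a,0)$ has $\psi(1)=h s_1\neq h$, so (i) fails; similarly with $B=M_p\oplus M_p$, $\varphi_k$ the two coordinate embeddings, your recipe for (ii) returns the non-unital map $a\mapsto(a,0)$. The defect is structural, not cosmetic: any candidate of the form $h\,\tilde\pi_1(a)$ privileges $\varphi_1$, whereas on the part of the spectrum where $h_1$ vanishes (or where $h_1$ and $h_2$ overlap nontrivially) the map $\psi$ must be assembled from \emph{all} the $\varphi_k$'s simultaneously, and there is no canonical unital extension of $\pi_1$ to $\mathcal{M}(D)$ ($M_p$ has no characters, so one cannot just fill in the missing corner).

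This interpolation is precisely the nontrivial content of the lemma and is what the paper's proof supplies. After reducing to $n=2$, the paper observes that $h=\varphi_1(1)+\varphi_2(1)$ is a strictly positive \emph{central} element of $B_0:=C^*(\varphi_1(M_p)\cup\varphi_2(M_p))$, normalizes to get two commuting order zero maps $\psi_1,\psi_2\colon M_p\to\mathcal{M}(B_0)$ with $\psi_1(1)+\psi_2(1)=1$, and then uses the corresponding commuting cone homomorphisms to obtain a unital \sh{} from the dimension drop algebra $I(p,p)\cong \big(\mathrm{Cone}(M_p)\otimes\mathrm{Cone}(M_p)\big)/\langle 1\otimes 1-(\iota\otimes 1_p)\otimes 1-1\otimes(\iota\otimes 1_p)\rangle$ into $\mathcal{M}(B_0)$; composing with the unital embedding $M_p\hookrightarrow I(p,p)$ from Jiang--Su yields a unital \sh{} $\Lambda\colon M_p\to\mathcal{M}(B_0)$, and one sets $\psi(x)=\Lambda(x)h$. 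Your commutation computations in $B^{**}$ (that $[\pi_k(a),h_\ell]=0$ and $[\pi_k(a),\pi_\ell(b)]=0$ for $k\neq\ell$) are fine and consistent with what the paper uses, but without the $I(p,p)$ step (or an equivalent device for blending the supporting homomorphisms where their supports overlap) the argument does not prove the statement.
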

\begin{proof} (i). It clearly suffices to consider the case where $n=2$. Upon replacing $B$ by the \Cs{} generated by the images of the completely positive maps $\varphi_j$ we can assure that the image of $\psi$ will be contained in that sub-\Cs. 

We use that $\varphi_k(a)=\lambda_k(\iota \otimes a)$ for a \sh{} 
$\lambda_k\colon  \Cf_0 ((0,1], M_p)\to B$, where $\iota \in \Cf_0((0,1])$ denotes the function $\iota(t)=t$.
Since $\varphi_k(1)$ commutes with $\varphi_k(M_p)$,
and since $\varphi_1(M_p)$ and $\varphi_2(M_p)$ commute element-wise,
we get that $\varphi_1(1)+\varphi_2(1)$ is a strictly positive contraction
in the center of the sub-\Cs{} $B_0 := C^*(\varphi_1(M_p)\cup \varphi_2(M_p))$ of $B$.
Define new order zero completely positive contractions 
$\psi_k\colon M_p\to \cM (B_0)$, $k=1,2$, by 
$$ \psi_k(a):= \lim_{n\to \infty}\varphi_k(a)( \varphi_1(1)+\varphi_2(1)+n^{-1} \! \cdot \! 1)^{-1}$$
Then $\psi_1$ and $\psi_2$ are two commuting order zero maps 
with $\psi_1(1)+\psi_2(1)=1$.
Consider the corresponding  \sh s
$\Lambda_k\colon \Cf_0 ((0,1], M_p)\to \cM(B_0)$, i.e., where $\psi_k(a) = \Lambda_k(\iota \otimes a)$. Then $\Lambda_1$ and $\Lambda_2$ commute and 
satisfy $\Lambda_1(\iota \ot  1)+\Lambda_2(\iota\ot 1)= \psi_1(1) + \psi_2(1)=1$.

We show below that there is a unital \sh{} $\Lambda \colon M_p \to \cM(B_0)$ arising as the composition of unital \sh s:
\begin{equation} \label{eq:I(p,p)}
M_p \to I(p,p) \to C^*(\mathrm{Im}(\Lambda_1),\mathrm{Im}(\Lambda_2)) \hookrightarrow \cM(B_0),
\end{equation}
where $I(p,p)$ is the dimension drop \Cs, consisting of all continuous functions $f \colon [0,1] \to M_p \otimes M_p$ such that $f(0) \in M_p \otimes \C$ and $f(1) \in \C \otimes M_p$. 
Once the existence of $\Lambda$ has been established it will follow that $\psi(x) = \Lambda(x)(\varphi_1(1)+\varphi_2(1))$, $x \in M_p$, defines a  completely positive order zero map $\psi \colon M_p \to B$ satisfying $\psi(1) = \varphi_1(1) + \varphi_2(1)$.

The existence of the unital \sh{} $M_p \to I(p,p)$ in \eqref{eq:I(p,p)} was shown in \cite[proof of Lemma 2.2]{JiangSu.alg}. 

Let $\mathrm{Cone}(M_p)$ denote the unitization of the cone $\Cf_0((0,1],M_p)$. 
The \sh s $\Lambda_1$ and $\Lambda_2$ extend to unital \sh s $\mathrm{Cone}(M_p) \to \cM(B_0)$ with commuting images, and thus they induce a unital \sh{} $\mathrm{Cone}(M_p) \otimes \mathrm{Cone}(M_p)  \to \cM(B_0)$, which maps $1\otimes 1 - (\iota\otimes 1_p)\otimes 1 - 1\otimes (\iota\otimes 1_p)$ to zero, because  $\Lambda_1(\iota \ot  1)+\Lambda_2(\iota\ot 1)=1$. It is well-known that $I(p,p)$ is naturally  isomorphic to the quotient of $\mathrm{Cone}(M_p) \otimes \mathrm{Cone}(M_p)$ by the ideal generated by $1\otimes 1 - (\iota\otimes 1_p)\otimes 1 - 1\otimes (\iota\otimes 1_p)$. This proves the existence of the second \sh{} in \eqref{eq:I(p,p)}.

(ii). Let $\lambda\colon \Cf_0 ((0,1], M_p)\to B$ be the \sh{} associated with the unital completely positive order zero map $\psi \colon M_p \to B$, so that $\psi(a)=\lambda (\iota \otimes a)$.
Then $\psi(a^*a)\psi(1)=\psi(a)^*\psi(a)$ for all $a\in M_p$.
Hence, $\psi$ is multiplicative if $\psi(1)=1$.
\end{proof}

\begin{prop}\label{prop:M2.in.F(A).mod.JA}
Suppose that $A$ is a unital, stably finite, and separable \Cs{} such that $\partial T(A)$ is closed and has finite topological dimension, and such that for each $\tau\in \partial T(A)$
the corresponding II$_1$-factor $\rho_\tau(A)''$ is McDuff.

Then there exists a unital \sh{}  $\varphi\colon M_2\to F(A)/(J_A\cap F(A))$.
\end{prop}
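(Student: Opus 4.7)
Let $m=\dim\partial T(A)$. The plan is to apply Lemma \ref{lem:modification} along a sequence, take ultrapower limits, and thereby produce pairwise commuting cpc order-zero maps $\psi^{(1)},\dots,\psi^{(m+1)}\colon M_2\to F(A)/(J_A\cap F(A))$ whose unit-images sum to $1$; Lemma \ref{lem:commuting.order.zero.c.p.c}\,(ii) will then merge them into a single unital $^*$-homomorphism $M_2\to F(A)/(J_A\cap F(A))$, as required.

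\textbf{Construction.} Fix an increasing sequence of compact subsets $\Omega_n\subset A$ with dense union, and $\ep_n\searrow 0$. For each $n$, Lemma \ref{lem:modification} supplies completely positive contractions $W_n^{(1)},\dots,W_n^{(m+1)}\colon M_2\to A$ satisfying its conditions (i)--(vi). I set
\[
\widetilde W^{(\ell)}(b)\;:=\;\pi_\omega\bigl(W_n^{(\ell)}(b)\bigr)_{n}\in A_\omega.
\]
Condition (i) forces $\widetilde W^{(\ell)}(M_2)\subseteq A_\omega\cap A'=F(A)$; condition (iii) yields $[\widetilde W^{(k)}(b),\widetilde W^{(\ell)}(c)]=0$ in $A_\omega$ whenever $k\ne\ell$; and condition (ii) places $\widetilde W^{(\ell)}(1)^{1/2}$, hence also $\widetilde W^{(\ell)}(1)$, in the commutant of $\widetilde W^{(\ell)}(M_2)$. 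I write $\psi^{(\ell)}$ for the composition of $\widetilde W^{(\ell)}$ with the quotient map $F(A)\twoheadrightarrow F(A)/(J_A\cap F(A))$.

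\textbf{Order zero and sum one in the quotient.} Condition (iv), read in the ultrapower, gives
\[
\bigl\|\widetilde W^{(\ell)}(1)^{1/2}\widetilde W^{(\ell)}(b^*b)\widetilde W^{(\ell)}(1)^{1/2}-\widetilde W^{(\ell)}(b)^*\widetilde W^{(\ell)}(b)\bigr\|_{1,\omega}=0,
\]
so this difference lies in $J_A$. Using (ii) to push one copy of $\widetilde W^{(\ell)}(1)^{1/2}$ through $\widetilde W^{(\ell)}(b^*b)$, I obtain in the quotient the identity $\psi^{(\ell)}(1)\,\psi^{(\ell)}(b^*b)=\psi^{(\ell)}(b)^*\psi^{(\ell)}(b)$ for every $b\in M_2$. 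Taking $b=p$ a projection in $M_2$ this rewrites as $\psi^{(\ell)}(p)\psi^{(\ell)}(1-p)=0$; since every pair of nonzero orthogonal projections in $M_2$ is of the form $\{p,1-p\}$, the map $\psi^{(\ell)}$ preserves orthogonality of positive elements and is therefore a cpc order-zero map by the Winter--Zacharias characterization. For the unit-sum condition, the construction inside the proof of Lemma \ref{lem:modification} additionally gives the operator bound $\sum_\ell W_n^{(\ell)}(1)\le 1+\ep_n$, because $W_n^{(\ell)}(1)=(a_n^{(\ell)})^2$ (the $a_{n,j}^{(\ell)}$ being pairwise orthogonal) and Corollary \ref{cor:partial.T(A).closed}\,(iv) yields $\sum_\ell(a_n^{(\ell)})^2\le 1+\ep_n$; combining this with (v), a short positive-part estimate shows $\|1-\sum_\ell W_n^{(\ell)}(1)\|_1\to 0$, so $1-\sum_\ell\widetilde W^{(\ell)}(1)\in J_A$ and $\sum_\ell\psi^{(\ell)}(1)=1$ in the quotient.

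\textbf{Assembly and main obstacle.} Lemma \ref{lem:commuting.order.zero.c.p.c}\,(ii), applied to the pairwise commuting cpc order-zero maps $\psi^{(1)},\dots,\psi^{(m+1)}$ with $\sum_\ell\psi^{(\ell)}(1)=1$, will then deliver the desired unital $^*$-homomorphism $\varphi\colon M_2\to F(A)/(J_A\cap F(A))$. The substantive work already sits upstream, inside Lemma \ref{lem:modification}, which packages the McDuff hypothesis, the closedness and finite topological dimension of $\partial T(A)$, and the partition-of-unity assembly into its six conditions. The only slightly delicate point in the present assembly is the recognition that the asymmetric inequality (iv) of Lemma \ref{lem:modification} combines with the commutation in (ii) to collapse the two copies of $W^{(\ell)}(1)^{1/2}$ into $W^{(\ell)}(1)$ and thereby produce the clean order-zero identity $\psi^{(\ell)}(1)\psi^{(\ell)}(b^*b)=\psi^{(\ell)}(b)^*\psi^{(\ell)}(b)$ modulo $J_A$.
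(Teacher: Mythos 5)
Your proof is correct and follows essentially the same route as the paper's: ultrapower limits of the maps supplied by Lemma \ref{lem:modification} yield pairwise commuting c.p.c.\ order zero maps into $F(A)/(J_A\cap F(A))$ whose unit-images sum to $1$, and Lemma \ref{lem:commuting.order.zero.c.p.c}\,(ii) merges them into the desired unital \sh{} $M_2\to F(A)/(J_A\cap F(A))$. The one divergence is the unit-sum step, where you invoke the operator bound $\sum_\ell W_n^{(\ell)}(1)\le 1+\ep_n$, which is not among the stated conclusions (i)--(vi) of Lemma \ref{lem:modification} but is indeed available from its construction (via Corollary \ref{cor:partial.T(A).closed}\,(iv)), whereas the paper works only with the stated conditions, using (vi) to place each $\lambda_k(1)$ in $\mathrm{Mult}(\T_\omega)$ and then concluding $\T_\omega\bigl(\bigl|\sum_k\lambda_k(1)-1\bigr|\bigr)=\bigl|\T_\omega\bigl(\sum_k\lambda_k(1)-1\bigr)\bigr|=0$; both arguments are valid.
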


\begin{proof} Let $\Omega \subset A$ be a compact subset of $A$
whose linear span is dense in $A$.
Find completely positive contractions  
$W^{(1)}_n,\ldots, W^{(m+1)}_n\colon M_2\to A$
that satisfy conditions (i) -- (vi) of Lemma
\ref{lem:modification} for 
$\varepsilon:= \varepsilon_n:=2^{-(n+1)}$ and $\Omega$.
Then the completely positive contractions $\lambda_k \colon M_2 \to A_\omega$, $1 \le k \le m+1$, given by 
$$\lambda_k (b):= \pi_\omega\big(W^{(k)}_1(b), W^{(k)}_2(b),W^{(k)}_3(b),\ldots\big), \qquad b \in M_2,$$
have image in $F(A)$, 
and they satisfy  
\begin{itemize}
\item $\big[\lambda _k(1), \lambda _k(b)\big] = 0$ for all $k$ and for all $b\in M_2$,  \vspace{.1cm}
\item $\big[\lambda_k(b),\lambda_\ell(c)\big] = 0$ for $k\not=\ell$ and $b,c\in M_2$, \vspace{.1cm}
\item $\lambda_k(1)\lambda_k(b^*b) -  \lambda_k(b)^*\lambda_k(b) \in J_A$ for all $k$ and all $b \in M_2$, \vspace{.1cm}
\item $\displaystyle{\T_\omega\big(\sum_{k=1}^{m+1} \lambda_k(1) - 1\big) = 0}$,  \vspace{.1cm}
\item $\lambda_k(1) \in \mathrm{Mult}(\T_\omega)$ for all $k$.
\end{itemize}
(To see that the third bullet holds, recall that $J_A$ consists of all elements $x  \in A_\omega$ with $\|x\|_{1,\omega} =0$.) It follows from the fourth and the fifth bullet that 
$$\T_\omega\Big( \big| \sum_{k=1}^{m+1} \lambda_k(1) - 1\big|\Big) = 0,$$
or, equivalently, that $\sum_{k=1}^{m+1} \lambda_k(1) - 1 \in J_A$. Thus,  
$$\varphi_k:=\pi_{J_A}\circ \lambda_k \colon M_2 \to F(A)/(J_A\cap F(A)), \qquad 1 \le k \le m+1,$$ define  order zero completely positive contractions with $\sum_{k=1}^{m+1} \varphi_k(1) = 1$. (Use the first and the third bullet above to see that the $\varphi_k$'s preserve orthogonality, so they are of order zero.)
Now, apply Lemma 
\ref{lem:commuting.order.zero.c.p.c} to obtain the desired unital \sh{} from $M_2$ into 
$F(A)/(J_A\cap F(A))$.
\end{proof}
\noindent
Our main result below extends \cite[Theorem 1.1]{Matui.Sato}. It follows immediately from Proposition \ref{prop:M2.in.F(A).mod.JA} above and from Proposition \ref{prop:(SI).and.unital.image.of.I(2,3).in.F(A).mod.JA}. 
\begin{thm}\label{thm:(SI).implies.Z-absorbtion}
Let $A$ be a non-elementary, unital, stably finite, simple, and separable  \Cs.
If $A$ satisfies conditions (i)--(iv) below, then 
$A\cong A\otimes \mathcal{Z}$.
\begin{itemize}
\item[(i)] Each II$_1$-factor representation of $A$
generates a McDuff factor.\footnote{\,
Equivalently: 
$F(A)/ (F(A) \cap J_{\tau,\omega})$ is non-commutative for every factorial trace states
$\tau$ of $A$, \cf \cite{McDuff.1970}.} \vspace{.1cm}

\item[(ii)] $A$ satisfies property $(SI)$, cf.\ Definition \ref{def:property(SI)}. \vspace{.1cm}

\item[(iii)]  $\partial T(A)$ is closed in $T(A)$, i.e., the Choquet simplex $T(A)$ is a Bauer simplex. \vspace{.1cm}
\item[(iv)] The set $\partial T(A)$ of extremal tracial states
is a topological space of finite dimension. 
\end{itemize}
\end{thm}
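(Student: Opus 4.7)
The plan is to obtain the theorem as an immediate combination of the two main propositions already established: Proposition \ref{prop:M2.in.F(A).mod.JA} produces a unital \sh{} from $M_2$ into $F(A)/(J_A \cap F(A))$, and Proposition \ref{prop:(SI).and.unital.image.of.I(2,3).in.F(A).mod.JA} then upgrades such a \sh{} to the isomorphism $A \cong A \otimes \cZ$, using property (SI). No fresh construction is required at this stage; everything has been packaged into those two propositions.

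First I would verify the hypotheses of Proposition \ref{prop:M2.in.F(A).mod.JA}. That proposition demands $A$ unital, stably finite, separable, with $T(A)\ne\emptyset$, with $\partial T(A)$ closed in $T(A)$ and of finite topological dimension, and with $\rho_\tau(A)''$ McDuff for every $\tau \in \partial T(A)$. These are supplied, respectively, by the standing hypotheses of the theorem and by conditions (iii), (iv), and (i). (That $T(A)\ne\emptyset$ is implicit in (iii)--(iv), since only then are the boundary conditions non-vacuous in the intended sense.) Applying the proposition yields a unital \sh{}
\[
\varphi \colon M_2 \longrightarrow F(A)/(J_A \cap F(A)).
\]

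Next I would invoke Proposition \ref{prop:(SI).and.unital.image.of.I(2,3).in.F(A).mod.JA}, whose hypotheses---$A$ separable, simple, unital, stably finite, and with property (SI)---are all in force by condition (ii) together with the standing assumptions. The map $\varphi$ above is exactly condition (iii) of that proposition in the case $k=2$; by the equivalence recorded there one obtains condition (i), namely $A \cong A \otimes \cZ$, which is the conclusion of the theorem.

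There is no genuine obstacle at this assembly step: the difficulty has been concentrated in the proofs of the two cited propositions. Proposition \ref{prop:M2.in.F(A).mod.JA} itself required the factorization of $\T_\omega$ through its multiplicative domain (using that $T(A)$ is a Bauer simplex, Section \ref{sec:partial.T(A).closed}), the covering-system construction exploiting finite topological dimension of $\partial T(A)$ together with the McDuff hypothesis (Section \ref{sec:partial.T(A).fin.dim}), and Lemma \ref{lem:commuting.order.zero.c.p.c} to amalgamate commuting order-zero contractions. Proposition \ref{prop:(SI).and.unital.image.of.I(2,3).in.F(A).mod.JA} in turn relied on (SI) and the estimates of Lemma \ref{lem:from.weak.comparison.to.estimate} to bridge from $M_2$ in the quotient to $I(2,3)$ in $F(A)$, and on Dadarlat--Toms together with \cite[Corollary 1.13]{Kir.AbelProc} to pass from $I(2,3)$ to $\cZ$. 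The non-elementarity hypothesis plays no direct role in this final step but is tacitly needed for the background apparatus (in particular so that $F(A)$ has enough room to house the unital copies constructed) to be non-trivial.
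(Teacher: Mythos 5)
Your proposal is correct and coincides with the paper's own proof, which derives Theorem \ref{thm:(SI).implies.Z-absorbtion} immediately by combining Proposition \ref{prop:M2.in.F(A).mod.JA} (to get a unital \sh{} $M_2 \to F(A)/(J_A\cap F(A))$ from conditions (i), (iii), (iv)) with Proposition \ref{prop:(SI).and.unital.image.of.I(2,3).in.F(A).mod.JA} (using property (SI) to conclude $A\cong A\otimes\cZ$). Your verification of the hypotheses and your identification of where each condition is used match the intended argument exactly.
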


\noindent Some of the conditions in the theorem above are sometimes automatically fulfilled. 
Condition (i) holds for all nuclear \Cs s (cf.\ Theorem \ref{thm:M-omega} and the theorem by Connes that hyperfinite II$_1$-factors are McDuff). Condition (i) also holds
for all $A$ with $A\cong A\otimes \mathcal{Z}$. Hence condition (i) is \emph{necessary}. 

Condition (ii) holds for all \emph{nuclear} \Cs s $A$ that have \emph{local weak comparison}
(which again is implied by $\alpha$-comparison for some $\alpha<\infty$,
and in particular from strict  comparison). See Corollary \ref{cor:from.excision.to.(SI).for.nuclear.A}. 

It is unknown, if  $A\cong A\otimes \mathcal{Z}$ implies 
property (SI), even for separable, simple, unital and \emph{exact} \Cs s
$A$.

Conditions (iii) and (iv) do not follow from having the isomorphism $A\cong A\otimes \mathcal{Z}$. Indeed, 
any (metrizable compact) Choquet simplex can appear as 
$T(A)$ of a suitable simple separable unital AF-algebra
$A$,   \cite{EffrosHandelmanShen}. 
Infinite-dimensional simple separable unital AF-algebras 
always tensorially  absorb the Jiang-Su algebra $\mathcal{Z}$,
because they are approximately divisible.

\smallskip
\begin{cor}\label{cor:nuclear.case}
The following conditions are equivalent for any non-elementary
stably finite, separable, nuclear, simple and unital \Cs{} $A$, for which $\partial T(A)$ is closed in $T(A)$
and $\partial T(A)$  is a finite-dimensional topological space.
\begin{itemize}
\item[(i)] $A \cong A \otimes \cZ$. \vspace{.1cm}
\item[(ii)] $A$ has the local weak comparison property. \vspace{.1cm}
\item[(iii)] $A$ has strict comparison. 
\end{itemize}
\end{cor}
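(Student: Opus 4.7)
The plan is to close the loop $\mathrm{(i)} \Rightarrow \mathrm{(iii)} \Rightarrow \mathrm{(ii)} \Rightarrow \mathrm{(i)}$ by invoking results already established earlier in the paper, together with one classical result for each of the non-trivial implications.

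First, $\mathrm{(iii)} \Rightarrow \mathrm{(ii)}$ is immediate from the definitions: as remarked just after Definition~\ref{def:weak.comparison}, strict comparison trivially implies local weak comparison (one can take $\gamma(A) = 1$). Next, $\mathrm{(i)} \Rightarrow \mathrm{(iii)}$ is the main theorem of \cite{Ror.Z.absorb}: any $\cZ$-absorbing \Cs{} has almost unperforated Cuntz semigroup and hence strict comparison. Neither of these two implications uses the dimension or closedness hypotheses on $\partial T(A)$.

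The substantive implication is $\mathrm{(ii)} \Rightarrow \mathrm{(i)}$, and the plan is to verify the four hypotheses (i)--(iv) of Theorem~\ref{thm:(SI).implies.Z-absorbtion} in this setting. Conditions (iii) and (iv) of that theorem are just the standing hypotheses on $\partial T(A)$. Since $A$ is unital, simple, stably finite and exact (being nuclear), $T(A)$ is nonempty and $QT(A)=T(A)$ by Haagerup's theorem \cite{Haagerup.1991.notes}; hence Corollary~\ref{cor:from.excision.to.(SI).for.nuclear.A} applies to give property (SI), which is condition (ii) of the theorem. For condition (i), let $\tau$ be any factorial trace on $A$ and let $N = \rho_\tau(A)''$. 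Since $A$ is nuclear, $N$ is a hyperfinite (or finite-dimensional) von Neumann algebra by Connes' theorem; as $A$ is infinite-dimensional, simple and stably finite with a factorial trace, $N$ is the hyperfinite $\mathrm{II}_1$-factor $\mathcal{R}$, which is McDuff. Thus Theorem~\ref{thm:(SI).implies.Z-absorbtion} yields $A \cong A \otimes \cZ$.

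There is no real obstacle here beyond bookkeeping: the whole content of the corollary is that, under the topological assumptions on $\partial T(A)$, the nuclear case of the Toms--Winter conjecture follows from the machinery assembled in the paper. The only point deserving a moment's care is the identification of the GNS factor as McDuff; one should observe that simplicity of $A$ forces $\tau$ to be faithful on $A$, so the standard form of $N$ has separable predual, and the Murray--von Neumann/Connes classification then pins down $N \cong \mathcal{R}$ and hence $N \cong N \,\overline{\otimes}\, \mathcal{R}$, as required by Theorem~\ref{thm:(SI).implies.Z-absorbtion}(i).
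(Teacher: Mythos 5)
Your proof is correct and follows essentially the same route as the paper: (iii)$\Rightarrow$(ii) is trivial, (i)$\Rightarrow$(iii) is \cite{Ror.Z.absorb}, and (ii)$\Rightarrow$(i) combines Corollary \ref{cor:from.excision.to.(SI).for.nuclear.A} with Theorem \ref{thm:(SI).implies.Z-absorbtion}, whose McDuff hypothesis you verify via Connes' theorem exactly as in the remark following that theorem. Your explicit check that the GNS factors are hyperfinite II$_1$, hence $\mathcal{R}$, is just a spelled-out version of what the paper leaves as a remark.
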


\begin{proof} The implication (ii) $\Rightarrow$ (i) follows from Theorem \ref{thm:(SI).implies.Z-absorbtion}  and from Corollary \ref{cor:from.excision.to.(SI).for.nuclear.A}. 

(i) $\Rightarrow$ (iii) holds in general, cf.\ \cite{Ror.Z.absorb}; and (iii) $\Rightarrow$ (ii) holds in general (for trivial reasons). 
\end{proof}

\noindent It seems plausible that the three conditions above are equivalent for all non-elementary
stably finite, separable, nuclear, simple and unital \Cs{} $A$, without the assumption on the trace simplex. 

The equivalence between (ii) and (iii) is curious. It may hold for all \emph{simple} \Cs s (but it  does not hold in general for non-simple \Cs s).

It follows from Corollary \ref{cor:nuclear.case}, together with Lemma \ref{lm:pure}, that any non-elementary
stably finite, separable, nuclear, simple and unital \Cs{} $A$, for which $\partial T(A)$ is closed in $T(A)$
and $\partial T(A)$  is a finite-dimensional topological space, and which is $(m,\bar{m})$-pure for some $m,\bar{m} \in \N$, satisfies $A \cong A \otimes \cZ$. This shows that the main theorem from Winter's paper, \cite{Winter.Z}, concerning \Cs s with locally finite nuclear dimension follows from Corollary \ref{cor:nuclear.case}  in the case where $\partial T(A)$ is closed in $T(A)$, and $\partial T(A)$  is a finite-dimensional topological space. This shows how important it is to resolve the question if the conditions on $T(A)$ in Corollary \ref{cor:nuclear.case} can be removed!


\bigskip

\noindent
\address{Institut f{\"u}r Mathematik,
Humboldt Universit{\"a}t zu Berlin,\\ 
Unter den Linden 6, 
D--10099 Berlin, Germany}\\
\email{kirchbrg@mathematik.hu-berlin.de}

\medskip

\noindent
\address{Department of Mathematical Sciences, 
University of Copenhagen,\\ 
Universitetsparken 5,
DK-2100 Copenhagen, Denmark}\\
\email{rordam@math.ku.dk}
\end{document}